\documentclass[amssymb,amsfonts,refcheck,12pt,verbatim,righttag]{amsart}
\usepackage{amssymb,amscd}
\usepackage{color}
\textheight 8.0in
\textwidth 6.00in
\oddsidemargin 0.25in
\evensidemargin 0.25in
\parskip 1.0ex

\numberwithin{equation}{section} %

\def\colon{{:}\;}

\def\R {\Bbb R}

\newcommand{\beq}{\begin{equation}}
\newcommand{\eeq}{\end{equation}}

\newcommand{\ben}{\begin{eqnarray}}
\newcommand{\een}{\end{eqnarray}}

\newcommand{\bet}{\begin{eqnarray*}}
\newcommand{\eet}{\end{eqnarray*}}

\newtheorem{thm}{Theorem}[section]
\newtheorem{que}[thm]{Question}
\newtheorem{lem}[thm]{Lemma}
\newtheorem{pro}[thm]{Proposition}
\newtheorem{cor}[thm]{Corollary}
\newtheorem{ex}[thm]{Example}
\newtheorem{de}[thm]{Definition}
\newtheorem{re}[thm]{Remark}

\def\R {\Bbb R}
\def\N {\Bbb N}
\def\M {{\mathcal M}}
\def\W {{\mathcal W}}

\def\A{{\mathcal A}}
\def\B {{\mathcal B}}
\def\E {{\mathcal E}}
\def\a{{\bf a}}
\def\b{{\bf b}}

\def\I{{\mathcal I}}
\def\J{{\mathcal J}}
\def\Z{{\Bbb Z}}

\def\Q{{\mathcal Q}}

\def\ba{{\bf a}}
\def\D{{\mathcal D}}
\def\C{{\mathcal C}}
\def\T{{\Bbb T}}
\def\htop{h_{\rm top}}
\pagestyle{plain}
\parskip 1.0ex
\theoremstyle{plain}

\begin{document}
\baselineskip 15pt

\title
{Variational principle for weighted topological pressure}

\author{De-Jun FENG}
\address{
Department of Mathematics\\
The Chinese University of Hong Kong\\
Shatin,  Hong Kong\\
}
\email{djfeng@math.cuhk.edu.hk}

\author{Wen Huang}
\address{Department of Mathematics, Sichuan University,
Chengdu, Sichuan 610064, China \\ {\it  and }\\
School of Mathematical Sciences, University of Science and Technology of
China, Hefei, Anhui 230026, China\\
}
\email{wenh@mail.ustc.edu.cn }
\keywords{Variational principle, topological pressure, factor maps, Hausdorff dimension, self-affine carpets}
\subjclass[2000]{37D35;  37C45; 37B40}
\date{}

\begin{abstract}
Let $\pi:X\to Y$ be a factor map, where $(X,T)$ and  $(Y,S)$ are topological dynamical systems.    Let $\ba=(a_1,a_2)\in \R^2$ with $a_1>0$ and $a_2\geq 0$, and $f\in C(X)$.  The $\ba$-weighted topological pressure of $f$, denoted by $P^\ba(X, f)$,  is defined by resembling the Hausdorff dimension of subsets of self-affine carpets.   We prove the following variational principle:
$$
P^\ba(X, f)=\sup\left\{a_1h_\mu(T)+a_2h_{\mu\circ\pi^{-1}}(S)+\int f \;d\mu\right\},
$$
where the supremum is taken over the $T$-invariant measures on $X$.  It not only generalizes the variational principle of classical topological pressure, but also provides a  topological extension of  dimension theory of  invariant sets and measures on the torus under affine diagonal endomorphisms. A higher dimensional version of the result is also established.
\end{abstract}

\maketitle
\setcounter{section}{0}
\section{Introduction}
\setcounter{equation}{0}

Inspired by the theory of Gibbs states in statistical mechanics, Ruelle  \cite{Rue73} introduced the notion of topological pressure  to the theory of dynamical systems and established a variational principle for it.   Ruelle only considered the case when the underlying dynamical systems satisfy expansiveness and specification. Later Walters \cite{Wal75} generalized these results to general topological dynamical systems.  Topological pressure, and the associated variational principle and equilibrium measures constitute the main components of the thermodynamic formalism \cite{Rue78}. They play important roles in  dimension theory of dynamical systems. Indeed they provide as a basic tool in studying dimension of invariant sets and measures for conformal dynamical systems (see e.g. \cite{Bow79, Rue82, Pes97}).

In this paper we aim to introduce a generalized notion of pressure for factor maps between general topological dynamical systems, and establish a variational principle for it.  To be more precise, let us introduce some notation first. We say that $(X, T)$ is a {\it topological dynamical system} (TDS) if $X$ is a compact metric space and $T$ is a continuous map from $X$ to $X$. Let $(X,T)$ and  $(Y,S)$ be two topological dynamical systems. Suppose that  $(Y, S)$ is a factor of $(X, T)$,  in the sense that there exists a continuous surjective map $\pi: X\to Y$ such that $\pi\circ T=S\circ \pi$. The map $\pi$ is called a {\it factor map} from $X$ to $Y$.  Let $f$ be a real-valued continuous function on $X$, and let $a_1>0$, $a_2\geq 0$. The main purpose of this paper is to consider the following.

\begin{que}
\label{que-1}
How can one define a meaningful term $P^{(a_1, a_2)}(T, f)$ such that the following variational principle holds?
  \begin{equation}
 \label{e-var}
 P^{(a_1, a_2)}(T, f)=\sup\left\{a_1h_\mu(T)+a_2h_{\mu\circ\pi^{-1}}(S)+\int f \;d\mu\right\},
 \end{equation}
 where the supremum is taken over the set of all $T$-invariant Borel probability measures $\mu$ on $X$, and $h_\mu(T), h_{\mu\circ\pi^{-1}}(S)$ stand for the measure-theoretic entropies of $\mu$ and $\mu\circ \pi^{-1}$ with respect to $T$ and $S$, respectively (cf.  \cite{Wal82}).
\end{que}

  According to the variational principle of  Ruelle and Walters, the left-hand side of \eqref{e-var} equals $a_1P(T,\frac{1}{a_1}f)$ in the particular case when  $a_2=0$, where $P(T,\cdot)$ stands for the classic topological pressure of continuous functions (cf.  \cite{Wal82}). Our interest is on the general case that $a_2\neq 0$. This project is motivated from the study of dimension of invariant sets and measures on the tori  under diagonal affine expanding maps.

 Let $T$ be the endmorphism on the $2$-dimensional torus ${\Bbb T}^2=\R^2/\Z^2$
 represented by an integral diagonal matrix $A={\rm diag}(m_1,m_2)$, where $2\leq m_1<   m_2$. That is, $Tu=A u \;(\mbox{mod}\; 1)$ for $u\in  {\Bbb T}^2$. In their seminal works, Bedford \cite{Bed84} and McMullen \cite{McM84} independently determined the Hausdorff dimension of the so-called {\it self-affine Sierpinski gaskets}, which are a particular class of $T$-invariant subsets of $\T^2$ defined as follows:
 $$
 K(T, \D):=\left\{\sum_{n=1}^\infty A^{-n}u_n:\; u_n\in \D \mbox{ for all }n\geq 1\right\},
 $$
where   $\D$ runs over the non-empty subsets of $$\left\{\left(\begin{array}{l} i\\
 j\end{array} \right):\; i=0,1,\ldots, m_1,\; j=0,1,\ldots, m_2-1\right\}.$$   Moreover, McMullen \cite{McM84} exhibited explicitly that  for each $\D$, there exists an ergodic $T$-invariant measure $\mu$ supported on $ K(T, \D)$ with $\dim_H\mu=\dim_HK(T, \D)$, where $\dim_H$ denotes the Hausdorff dimension of a set or measure (cf. \cite{Fal03}). Later Kenyon and Peres \cite{KePe96} extended this result to any  compact $T$-invariant set $K\subseteq {\Bbb T}^2$, that is, there is an ergodic $T$-invariant measure $\mu$ supported on $K$ so that $\dim_H\mu=\dim_HK$. Furthermore Kenyon and Peres \cite{KePe96} established the following variational principle for the Hausdorff dimension of $K$:
\begin{equation}
\label{e-var-H}
\dim_HK=\sup\left\{ \frac{1}{\log m_2}h_\eta(T)+\left(\frac{1}{\log m_{1}}-\frac{1}{\log m_{2}}\right)h_{\eta\circ \pi^{-1}}(S) \right\},
\end{equation}
where the supremum is taken over the collection of $T$-invariant Borel probability measures $\eta$ supported on $K$,  $\pi: \T^2\to \T^1$ denotes the projection $(x,y)\mapsto x$, and $S: \T^1\to \T^1$ denotes the map $x\mapsto m_1 x(\mbox{mod}\; 1)$. It is easy to check that $(\T^1, S)$ is a factor of $(\T^2, T)$ with the factor map $\pi$. We emphasize that for any ergodic $T$-invariant measure $\eta$ on $\T^2$, the sum in the bracket of  \eqref{e-var-H} just equals $\dim_H\eta$ (cf. \cite[Lemma 3.1]{KePe96}); i.e.
\begin{equation}
\label{e-LY}
\dim_H\eta= \frac{1}{\log m_2}h_\eta(T)+\left(\frac{1}{\log m_{1}}-\frac{1}{\log m_{2}}\right)h_{\eta\circ \pi^{-1}}(S).
\end{equation}
 This is a version of  Ledrappier-Young  dimension formula for ergodic measures on $\T^2$.  We remark that an extension of the variational relation \eqref{e-var-H} to higher dimensional tori was also established by
Kenyon and Peres \cite{KePe96}.

Let us turn back to Question \ref{que-1}. According to \eqref{e-var-H}, if $\pi$ is the factor map $(x,y)\mapsto x$ between the toral dynamics
$(K, T)$ and $(\pi(K), S)$ as in the above paragraph, and if $f\equiv 0$ on $K$, and $a_1=\frac{1}{\log m_2}$, $a_2=\frac{1}{\log m_{1}}-\frac{1}{\log m_{2}}$,  then we can just define $P^{(a_1,a_2)}(f)$ to be the Hausdorff dimension of $K$. The problem arises how can we extend this to general factor maps between topological dynamical systems, as well as to general continuous functions $f$ and vectors $(a_1,a_2)$.

In \cite{BaFe12, Fen11}, Barral and the first author defined $P^{(a_1,a_2)}(f)$ (and called it {\it weighted topological pressure}) via relative thermodynamic formalism and subadditive thermodynamic formalism, in the particular case when the underlying dynamical systems $(X, T)$ and $(Y, S)$ are subshifts over finite alphabets. They also studied the dynamical properties of  weighted equilibrium measures (i.e. the invariant measures $\mu$ which attain the supremum in \eqref{e-var}) and gave the applications to the multifractal analysis on Sirpinski gaskets/sponges \cite{BaFe12}, and to the  uniqueness of invariant measures  of  full dimension supported on  affine-invariant subsets of tori \cite{Fen11}. Independently, in this subshift case Yayama \cite{Yay11} defined $P^{(a_1,a_2)}(f)$ for the particular case $f\equiv 0$, along the similar way.

However, the approach of \cite{BaFe12, Fen11} in defining $P^{(a_1,a_2)}(f)$ relies on certain special property of subshifts and does not extend to general topological dynamical systems (see Section~\ref{s-7.1} for details). Moreover, the variational principle
established therein does not give a new proof of Kenyon and Peres' variational relation \eqref{e-var-H} for the Hausdorff dimension.

In the paper, we define $P^{(a_1,a_2)}(f)$ in a new way, which is  inspired from the dimension theory of affine invariant subsets of tori, and from the ``dimension'' approaches of Bowen \cite{Bow73} and Pesin-Pitskel'
\cite{PePi84} in defining the topological entropy and topological pressure for arbitrary subsets.

 We will present our definition under a more general setting. Let $k\geq 2$. Assume that  $(X_i, d_i)$, $i=1,\ldots, k$, are compact metric spaces, and  $(X_i, T_i)$ are topological dynamical systems. Moreover, assume that for each $1\leq i\leq k-1$,   $(X_{i+1}, T_{i+1})$ is a factor of $(X_i, T_i)$ with a factor map $\pi_i: X_i\to X_{i+1}$; in other words, $\pi_1,\ldots, \pi_{k-1}$ are continuous maps so that the following diagrams commute.

$$
\begin{CD}
X_1 @ > \pi_1 >> X_2 @ >\pi_2>> \cdots @>\pi_{k-1}>> X_k\\
@V T_1VV  @ VV T_2 V @. @VV T_k V\\
X_1 @ > \pi_1 >> X_2 @ >\pi_2>> \cdots @>\pi_{k-1}>> X_k
\end{CD}
$$

For convenience,  we use $\pi_0$ to denote the identity map on $X_1$.
  Define
 $\tau_i:\;X_1\to X_{i+1}$ by $\tau_i=\pi_i\circ\pi_{i-1}\circ
\cdots \circ \pi_0$ for $i=0,1,\ldots,k-1$.

Let $\M(X_i,T_i)$ denote the set of all $T_i$-invariant Borel probability measures on $X_i$, endowed with the weak-star topology.
Fix $\ba=(a_1,a_2,\ldots,a_k)\in \R^k$ with $a_1>0$ and $a_i\geq 0$ for $i\geq 2$. For $\mu\in \M(X_1,T_1)$, we call
$$
h^{\ba}_\mu(T_1):=\sum_{i=1}^ka_ih_{\mu\circ \tau_{i-1}^{-1}}(T_i)
$$
the {\it $\ba$-weighted measure-theoretic entropy of $\mu$ with respect to $T_1$}, or simply, the {\it $\ba$-weighted entropy of $\mu$}, where
$h_{\mu\circ \tau_{i-1}^{-1}}(T_i)$ denotes the measure-theoretic entropy of $\mu\circ \tau_{i-1}^{-1}$ with respect to $T_i$.

\begin{de}[$\ba$-weighted Bowen ball]
\label{de-1.1}

  For $x\in X_1$, $n\in \N$, $\epsilon>0$,  denote
\begin{equation*}
\begin{split}
& B_n^\ba(x,\epsilon):=\left \{y\in X_1:\; d_i(T_i^j\tau_{i-1} x, T_i^j \tau_{i-1} y)< \epsilon \mbox{ for } 0\leq j\leq \lceil(a_1+\ldots +a_i)n\rceil-1, \right. \\
 &\left. \mbox{}\qquad\quad \qquad \qquad  i=1,\ldots, k\right\},
\end{split}
\end{equation*}
 where $\lceil u\rceil$ denotes the least integer $\geq u$. We call $B^\ba_n(x, \epsilon)$ the $n$-th $\ba$-weighted  Bowen ball of radius $\epsilon$ centered at  $x$.
\end{de}

Following the approaches of Bowen \cite{Bow73} and Pesin-Pitskel'
\cite{PePi84} in defining topological entropies and topological pressures of non-compact subsets \cite{Bow73}, and in which replacing Bowen balls by $\ba$-weighted Bowen balls, we can define the notions of  $\ba$-weighted topological entropy and $\ba$-weighted topological  pressure, respectively.  To be concise, in this section we only give the definition of $\ba$-weighted topological entropy. The definition of  $\ba$-weighted  topological pressure will be given in Section~\ref{s-3.1}.

Let $Z \subset X_1$ and $\epsilon> 0$. We say that an at
most countable  collection of $\ba$-weighted Bowen  balls $\Gamma
=\{B^\ba_{n_j}(x_j,\epsilon)\}_j$ {\it covers} $Z$ if $Z \subset \bigcup_j
B^\ba_{n_j}(x_j,\epsilon)$. For
$\Gamma=\{B^\ba_{n_j}(x_j,\epsilon)\}_j$, put $n(\Gamma) =\min_j
n_j$. Let $s \geq 0$ and define $$\Lambda^{\ba, s}_{N, \epsilon}(Z) =\inf\sum_j
\exp(-sn_j),$$
 where the infinum is taken over all collections $\Gamma=\{B^\ba_{n_j}(x_j,\epsilon)\}$ covering $Z$,
 such that $n(\Gamma) \geq  N$. The quantity $\Lambda^{\ba, s}_{N, \epsilon}(Z)$ does not decrease with $N$, hence the following
limit exists: $$\Lambda^{\ba, s}_\epsilon(Z) = \lim_{N\to \infty} \Lambda^{\ba, s}_{N, \epsilon}(Z).$$ There exists a critical value
of the parameter s, which we will denote by $\htop^\ba(T_1,Z,\epsilon)$,
where $\Lambda^{\ba, s}_{\epsilon}(Z)$ jumps from $\infty$ to $0$, i.e.
\[
\Lambda^{\ba, s}_{\epsilon}(Z) = \left\{
\begin{array}{ll}
0, & s > \htop^\ba(T_1,Z,\epsilon),\\
\infty,& s < \htop^\ba (T_1,Z,\epsilon).
\end{array}
\right.
\]
It is clear to see that $\htop^\ba(T_1,Z,\epsilon)$ does not decrease with $\epsilon$, and  hence the following limit exists, $$\htop^\ba(T_1, Z) = \lim_{\epsilon\to 0}\htop^\ba(T_1,Z,\epsilon).$$

 \begin{de}
 \label{de-1.2}
 We  call
$\htop^\ba(T_1, Z)$ the {\it $\ba$-weighted topological entropy of $T_1$ restricted to
$Z$} or, simply, the {\it $\ba$-weighted topological entropy of $Z$}, when there
is no confusion about $T_1$. In particular we write $\htop^\ba(T_1)$ for $\htop^\ba(T_1,X_1)$.
\end{de}

Similarly we will define the   $\ba$-weighted topological pressure $P^\ba(T_1, f)$ of continuous functions  $f$ on $X_1$ (see Section~\ref{s-3.1}). In the particular case when $f\equiv 0$, we have $P^\ba(T_1, 0)=\htop^\ba(T_1)$. The main result of this paper is the following variational principle for weighted topological pressure.

\begin{thm}\label{thm-1.1} Let $f\in C(X_1)$. Then
\begin{equation}
\label{e-main}
P^\ba(T_1, f)=\sup\left\{\int f  d\mu+h_\mu^\ba(T_1):\; \mu\in \M(X_1, T_1)\right\}.
\end{equation}
\end{thm}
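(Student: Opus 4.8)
The plan is to prove, separately, the two inequalities $P^\ba(T_1,f)\ge\int f\,d\mu+h^\ba_\mu(T_1)$ for every $\mu\in\M(X_1,T_1)$, and $P^\ba(T_1,f)\le\sup_\mu\bigl(\int f\,d\mu+h^\ba_\mu(T_1)\bigr)$. Two reductions come first: since $\mu\mapsto\int f\,d\mu+h^\ba_\mu(T_1)$ is affine, the ergodic decomposition reduces the first inequality to ergodic $\mu$; and since both sides are continuous in $\ba$, perturbing $\ba$ slightly we may assume $a_i>0$ for all $i$, so the orders $b_in:=(a_1+\cdots+a_i)n$ of Definition~\ref{de-1.1} are strictly increasing in $i$. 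The structural fact used throughout is the factorisation
\[
B_n^\ba(x,\epsilon)=B_{\lceil a_1n\rceil}(x,\epsilon;T_1)\ \cap\ \pi_1^{-1}\!\bigl(B_n^{\ba'}(\pi_1x,\epsilon)\bigr),\qquad\ba'=(a_1+a_2,a_3,\ldots,a_k),
\]
where $B_m(\cdot,\cdot;T_1)$ is an ordinary Bowen ball and $B_n^{\ba'}$ is the weighted Bowen ball for the shorter tower $X_2\to\cdots\to X_k$; thus a $k$-level weighted Bowen ball is the $\pi_1$-fibred product of $\lceil a_1n\rceil$ steps of $X_1$-dynamics with a $(k-1)$-level weighted Bowen ball downstairs.

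The core of the first inequality is a weighted Brin--Katok formula: for ergodic $\mu$ and $\mu$-a.e.\ $x$,
\[
\lim_{\epsilon\to0}\liminf_{n\to\infty}-\tfrac1n\log\mu\bigl(B_n^\ba(x,\epsilon)\bigr)=\lim_{\epsilon\to0}\limsup_{n\to\infty}-\tfrac1n\log\mu\bigl(B_n^\ba(x,\epsilon)\bigr)=h^\ba_\mu(T_1),
\]
which I would prove by induction on $k$ from the factorisation: writing $\nu=\mu\circ\pi_1^{-1}$ and expressing $\mu(B_n^\ba(x,\epsilon))$ as the integral over $B_n^{\ba'}(\pi_1x,\epsilon)$ of the conditional $\mu$-masses of an $\lceil a_1n\rceil$-step Bowen ball along the fibres of $\pi_1$, a relative Shannon--McMillan--Breiman theorem (for the Abramov--Rokhlin entropy $h_\mu(T_1\mid\pi_1)$) controls the fibre factor, the inductive hypothesis controls the base factor, and the identities $h_\mu(T_1)=h_\nu(T_2)+h_\mu(T_1\mid\pi_1)$ and $a_1h_\mu(T_1\mid\pi_1)+h^{\ba'}_\nu(T_2)=h^\ba_\mu(T_1)$ close the induction. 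Granting this, the first inequality follows the classical Pesin--Pitskel route: for $s<\int f\,d\mu+h^\ba_\mu(T_1)$, the formula, Birkhoff's theorem ($\tfrac1nS_nf\to\int f\,d\mu$, with $S_nf=\sum_{l=0}^{n-1}f\circ T_1^l$) and Egorov produce a set $A$, $\mu(A)>\tfrac12$, a scale $\epsilon$ and an $N_0$ with $\mu(B_n^\ba(x,2\epsilon))\le\exp(-sn+S_nf(x))$ for $x\in A$, $n\ge N_0$; for any countable cover of $X_1$ by $\ba$-weighted Bowen balls of orders $\ge N_0$, discard those missing $A$, pick $y_j\in A$ in each remaining ball $B_{n_j}^\ba(x_j,\epsilon)$, and use $\mu(B_{n_j}^\ba(x_j,\epsilon))\le\mu(B_{n_j}^\ba(y_j,2\epsilon))\le\exp(-sn_j+S_{n_j}f(y_j))\le\exp\bigl(-sn_j+\sup_{B_{n_j}^\ba(x_j,\epsilon)}S_{n_j}f\bigr)$; summing forces the defining set-function of $P^\ba(T_1,f)$ to stay $\ge\tfrac12$, whence $P^\ba(T_1,f)\ge s$.

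For the second inequality I would extract an invariant measure from near-optimal covers, in the spirit of Misiurewicz's proof of the classical variational principle. A monotonicity-in-$N$ argument gives $P^\ba(T_1,f)\le\lim_{\epsilon\to0}\limsup_{n\to\infty}\tfrac1n\log W_n(\epsilon)$, where $W_n(\epsilon)$ is the least $f$-weight $\sum_{x\in F}e^{S_nf(x)}$ of an order-$n$, scale-$\epsilon$ $\ba$-weighted spanning set $F$ of $X_1$; comparing spanning with separated sets replaces $W_n(\epsilon)$ by the largest $f$-weight $Z_n(2\epsilon)=\sum_{x\in E_n}e^{S_nf(x)}$ of an $\ba$-weighted $(n,2\epsilon)$-separated set $E_n$. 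Put $\sigma_n=Z_n(2\epsilon)^{-1}\sum_{x\in E_n}e^{S_nf(x)}\delta_x$, form time-averages $\mu_n=\tfrac1{m_n}\sum_{j=0}^{m_n-1}\sigma_n\circ T_1^{-j}$ over the longest horizon $m_n=\lceil b_kn\rceil$, and let $\mu\in\M(X_1,T_1)$ be a weak-$*$ limit of $(\mu_n)$ along a subsequence. Choose partitions $\mathcal{Q}_i$ of $X_i$ of diameter $<2\epsilon$ with $\mathcal{Q}_i$ refining $\pi_i^{-1}\mathcal{Q}_{i+1}$ and with $\mu\circ\tau_{i-1}^{-1}$-null boundaries; then $\ba$-weighted $2\epsilon$-separation makes each atom of the weighted join $\mathcal{G}_n:=\bigvee_{i=1}^k\tau_{i-1}^{-1}\bigl(\bigvee_{j=0}^{\lceil b_in\rceil-1}T_i^{-j}\mathcal{Q}_i\bigr)$ of $X_1$ meet $E_n$ in at most one point, so $\log Z_n(2\epsilon)=H_{\sigma_n}(\mathcal{G}_n)+\sum_{x\in E_n}\sigma_n(\{x\})S_nf(x)$. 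Writing $\mathcal{G}_n=\mathcal{S}_1\vee\cdots\vee\mathcal{S}_k$ with $\mathcal{S}_i=\tau_{i-1}^{-1}\bigvee_{j<\lceil b_in\rceil}T_i^{-j}\mathcal{Q}_i$, the relation $\mathcal{Q}_i\text{ refines }\pi_i^{-1}\mathcal{Q}_{i+1}$ makes $\mathcal{S}_i$ refine a coarsening of $\mathcal{S}_{i+1}$, so the chain rule bounds $H_{\sigma_n}(\mathcal{G}_n)$ by $H_{\sigma_n}(\mathcal{S}_k)+\sum_{i=1}^{k-1}H_{\sigma_n}\bigl(\mathcal{S}_i\mid\tau_i^{-1}\bigvee_{j<\lceil b_in\rceil}T_{i+1}^{-j}\mathcal{Q}_{i+1}\bigr)$; applying Misiurewicz's block-and-average estimate to each term (over horizons $\lceil b_kn\rceil$ and $\lceil b_in\rceil$) and letting $n\to\infty$ along the subsequence gives $\limsup_n\tfrac1nH_{\sigma_n}(\mathcal{G}_n)\le b_k\,h_{\mu\circ\tau_{k-1}^{-1}}(T_k)+\sum_{i=1}^{k-1}b_i\,h_{\mu\circ\tau_{i-1}^{-1}}(T_i\mid\pi_i)=h^\ba_\mu(T_1)$ --- the last equality being the elementary identity that turns $b_i$-weighted relative entropies into $a_i$-weighted absolute ones. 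Together with the (carefully bookkept) convergence of the $f$-term to $\int f\,d\mu$ and $\epsilon\to0$, this yields $P^\ba(T_1,f)\le\int f\,d\mu+h^\ba_\mu(T_1)$.

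The chief obstacle is this weighted Misiurewicz computation: the chain-rule decomposition of $H_{\sigma_n}(\mathcal{G}_n)$ must be organised so that the $k$ genuinely distinct time-horizons $\lceil b_in\rceil$ recombine into $h^\ba_\mu(T_1)$ rather than into a wasteful sum of absolute entropies, while every error is kept uniformly under control: the block-edge corrections, the $\mu$-null boundaries (choosable only after $\mu$ is known), the span-versus-separated loss, and --- most delicate --- the reconciliation of the various horizons (the horizon $n$ of the Birkhoff sum of $f$ versus the horizons $\lceil b_in\rceil$ of the separation condition and of the $k$ entropy terms) so that a single limit measure $\mu$ governs all of them. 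The companion burden is the weighted Brin--Katok formula, whose inductive proof needs a carefully stated relative Shannon--McMillan--Breiman theorem and Fubini-type estimates for the fibred conditional masses; the rest --- the ergodic-decomposition reduction and the perturbation handling $a_i=0$ --- is routine.
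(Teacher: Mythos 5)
Your lower bound is, modulo presentation, the paper's own argument: a weighted Brin--Katok formula (Theorem \ref{thm-4.1}, which the paper also proves by induction on $k$ via a weighted Shannon--McMillan--Breiman theorem) followed by the Egorov-plus-covering argument of Proposition \ref{thm-3.3}. Your upper bound, however, replaces the paper's dynamical Frostman lemma (Lemma \ref{lem-Frost}) by a Misiurewicz-style empirical-measure construction on separated sets, and it breaks down at precisely the point you flag as ``most delicate'' without resolving it.

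The problem is the reconciliation of horizons. After your chain-rule decomposition, Misiurewicz's block-and-average estimate applied to the level-$i$ term produces the empirical average $\lceil b_in\rceil^{-1}\sum_{j=0}^{\lceil b_in\rceil-1}\sigma_n\circ T_1^{-j}$, and for distinct $i$ these averages over incommensurable horizons need not converge to the same limit along any subsequence: the orbit segments carrying $\sigma_n$ can equidistribute toward one invariant measure on $[0,\lceil b_1n\rceil)$ and toward another on $[\lceil b_1n\rceil,\lceil b_2n\rceil)$. Passing to the limit therefore yields a bound by a sum of entropies of $k$ \emph{different} invariant measures $\mu^{(1)},\ldots,\mu^{(k)}$, which is not of the form $h^\ba_\mu(T_1)+\int f\,d\mu$ for a single $\mu$ and does not give the variational inequality. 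This is the actual crux of the theorem, and the paper spends most of Section \ref{s-5} on it: time is cut into the consecutive disjoint blocks $[t_{i-1}(n),t_i(n))$, the block averages $w_{i,n}$ are written as convex combinations linking the cumulative averages $\nu_{t_{i-1}(n)}$ and $\nu_{t_i(n)}$, and the Kenyon--Peres combinatorial lemma (Lemma \ref{lem-KP}), combined with the slow-variation bound $|g_i(n+1)-g_i(n)|=O(\log n/n)$ from Lemma \ref{lem-en.1} and the upper semi-continuity of the truncated entropy functionals $H_\bullet(\tau,M;\ell)$ (Lemma \ref{usc}), shows that along a subsequence every entropy term can be referred to the single horizon $t_1(n)$, whence one limit measure. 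Without Lemma \ref{lem-KP} or an equivalent device your argument does not close. Two lesser soft spots: the continuity of $P^\ba(T_1,f)$ in $\ba$ invoked to reduce to $a_i>0$ is itself unproved (and unnecessary in the paper), and your decomposition into relative entropies $h(T_i\mid\pi_i)$ requires upper semi-continuity of conditional entropy terms under weak-$*$ limits, which is more delicate than for the absolute entropies the paper manipulates.
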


In Section 6, we will extend the above theorem to the case that $f$ is a sub-additive potential.   As a corollary,  taking $f\equiv 0$ in Theorem \ref{thm-1.1}, we obtain the following  variational principle for  weighted topological entropy.

\begin{cor}
\label{cor-entropy}
$
\htop^\ba(T_1)=\sup\{h_\mu^\ba(T_1):\; \mu\in \M(X_1, T_1)\}.
$
\end{cor}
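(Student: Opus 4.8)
The plan is to read off Corollary~\ref{cor-entropy} directly from Theorem~\ref{thm-1.1} by specializing to the zero potential. The first step is to invoke the identification $P^\ba(T_1, 0) = \htop^\ba(T_1)$ recorded above: in the construction of $P^\ba(T_1, f)$ to be given in Section~\ref{s-3.1} — which mimics the definitions of Bowen \cite{Bow73} and Pesin--Pitskel' \cite{PePi84} with Bowen balls replaced by $\ba$-weighted Bowen balls — the weight assigned to each ball $B^\ba_{n_j}(x_j,\epsilon)$ in an admissible cover is of the form $\exp(-sn_j)$ multiplied by an exponential of a weighted Birkhoff-type sum of $f$ along that ball; setting $f\equiv 0$ collapses this weight to $\exp(-sn_j)$, which is exactly the summand appearing in the definition of $\Lambda^{\ba,s}_{N,\epsilon}(Z)$, and hence in $\htop^\ba(T_1,X_1)=\htop^\ba(T_1)$ (Definition~\ref{de-1.2}). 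Thus $P^\ba(T_1,0)=\htop^\ba(T_1)$.

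The second step is to evaluate both sides of \eqref{e-main} at $f\equiv 0$. The left-hand side becomes $P^\ba(T_1, 0)=\htop^\ba(T_1)$ by the first step, while on the right-hand side $\int f\,d\mu=\int 0\,d\mu=0$ for every $\mu\in\M(X_1,T_1)$, so the supremum reduces to $\sup\{h_\mu^\ba(T_1):\mu\in\M(X_1,T_1)\}$. Equating the two sides yields the assertion. There is no real obstacle here: the entire content of Corollary~\ref{cor-entropy} is contained in Theorem~\ref{thm-1.1}, and the only point requiring attention is the purely definitional identity $P^\ba(T_1,0)=\htop^\ba(T_1)$, which will in any case have been noted at the place where $\ba$-weighted topological pressure is defined.
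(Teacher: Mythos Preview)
Your proof is correct and follows exactly the paper's approach: the corollary is deduced immediately from Theorem~\ref{thm-1.1} by taking $f\equiv 0$, together with the definitional identity $P^\ba(T_1,0)=\htop^\ba(T_1)$ noted just before the statement.
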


 Theorem \ref{thm-1.1} and Corollary \ref{cor-entropy} provide as weighted versions of Ruelle-Walters' variational principle for topological pressure, and Goodwyn-Dinaburg-Goodman's variational principle for topological entropy (cf. \cite{Wal82}).  They are also topological extensions of Kenyon-Peres' variational principle  for Hausdorff dimension of toral affine invariant sets. Indeed, consider the  aforementioned factor map $\pi$  between the toral dynamics $(K, T)$ and $(\pi(K), S)$ and let  $a_1=\frac{1}{\log m_2}$, $a_2=\frac{1}{\log m_{1}}-\frac{1}{\log m_{2}}$. It is easy to see from our definition that $\htop^{(a_1, a_2)}(T, K)$ simply coincides with $\dim_HK$, and hence Corollary \ref{cor-entropy} recovers \eqref{e-var-H} and its higher dimensional versions given in \cite{KePe96}. Moreover, by Corollary \ref{cor-entropy}, we can generalize \eqref{e-var-H} to a class of skew-product expanding maps on  the $k$-torus (see Section~\ref{s-7.2} for details).

The proof of Theorem  \ref{thm-1.1} is quite sophisticated. Besides adopting some ideas from \cite{Wal75, Mis76} and \cite{KePe96}, we also introduce substantially new ideas in the proof.  For the convenience of the readers, in the following we illustrate  a rough outline of  our proof.

To see the lower bound in \eqref{e-main}, we first prove that for each  ergodic measure $\mu\in \M(X_1, T_1)$,
\begin{equation}
\label{e-BK}
\lim_{\epsilon \rightarrow 0} \liminf_{n\rightarrow +\infty} \frac{-\log \mu(B_n^{\bf a}(x,\epsilon))}{n}=
\lim_{\epsilon \rightarrow 0} \limsup_{n\rightarrow +\infty}
\frac{-\log \mu(B_n^{\bf a}(x,\epsilon))}{n}=h_\mu^{\bf a}(T_1)
\end{equation}
for $\mu$-a.e. $x\in X_1$. The above formula is not only a weighted version of Brin-Katok's Theorem \cite{BrKa83}  on local entropy, but also a topological extension of the Ledrappier-Young dimension formula \eqref{e-LY}.  The justification of \eqref{e-BK} is mainly adapted from Kenyon-Peres' proof of \eqref{e-LY} in \cite{KePe96} and  Brin-Katok's argument in \cite{BrKa83}. Based on \eqref{e-BK}, the lower bound in \eqref{e-main} follows from a simple covering argument.

The proof of the upper bound in \eqref{e-main} is  more complicated. First we apply the techniques in geometric measure theory to prove the following ``dynamical'' Frostman lemma: for any $0<s<P^{\ba}(T_1,f)$ and small enough $\epsilon>0$, there exists a Borel probability measure $\nu$ on $X_1$ and $N\in \N$ such that
\begin{equation}
\label{e-dyn}
\nu(B_n^{\ba}(x,\epsilon))\leq
\exp\left(-sn + \frac{1}{a_1}S_{\lceil a_1n\rceil}f(x)\right), \quad \forall x\in X_1, \; n\geq N,
\end{equation}
where $S_nf(x):=\sum_{i=0}^{n-1}f(T_1^i x)$. This is a key part in our proof. Notice that there  exists a small $\tau\in (0,\epsilon)$ such that for any Borel partition $\alpha_i$ of $X_i$ with $\mbox{diam}(\alpha_i)<\tau$, $i=1,\ldots, k$, we have
$$
\bigvee_{i=1}^k\bigvee_{j=t_{i-1}(n)}^{t_i(n)-1}T^{-j}_1\pi^{-1}_{i-1} \alpha_i(x)\subseteq B^\ba_n(x,\epsilon), \quad \forall x\in X_1, \; n\geq N,
$$
where $t_0(n)=0$, $t_i(n)=\lceil (a_1+\ldots +a_{i})n\rceil$, and $\vee$ stands for the join of partitions.
Hence \eqref{e-dyn} implies that
\begin{equation}
\label{e-6.2''}
\sum_{i=1}^k H_{\nu}\Big(\bigvee_{j=t_{i-1}(n)}^{t_i(n)-1}T^{-j}_1\pi^{-1}_{i-1} \alpha_i\Big)\geq sn  -\int \frac{1}{a_1}S_{\lceil a_1n\rceil}f(x) d\nu(x).
\end{equation}
Then,   as another key part, we  use \eqref{e-6.2''} and entropy theory to show  the  existence of a $T_1$-invariant measure $\mu$ on $X_1$ such that  $h^{\ba}_\mu(T_1)>s-\int f d\mu$, from which the upper bound follows. In the proof of this part, a combinatoric lemma (see Lemma \ref{lem-KP}) established by Kenyon-Peres \cite{KePe96} plays an important role; besides this, we also use a delicate compactness argument based on the upper semi-continuity of certain entropy functions, and adopt some ideas from \cite{Wal75,Mis76} as well. Reducing back to the aforementioned toral dynamics, our approach provides a new proof for the upper bound in Kenyon-Peres' variational principle \eqref{e-var-H}.

The paper is organized as follows. In Section~\ref{s-2}, we prove the upper semi-continuity of certain entropy functions.  In Section~\ref{s-3}, we define  weighted topological pressure for continuous functions and more generally for sub-additive potentials; we also establish a dynamical Frostman lemma for  the weighted topological pressure.    In Sections \ref{s-4}-\ref{s-5}, we prove respectively the lower and upper bounds of  Theorem \ref{thm-1.1}. In Section~\ref{s-6}, we extend Theorem \ref{thm-1.1} to the sub-additive case. In Section~7, we give some remarks, examples and questions.  In Appendix~\ref{s-a}, we prove  the formula \eqref{e-BK}.

\section{Upper semi-continuity of certain entropy functions}
\label{s-2}
In this section, we prove the upper semi-continuity of certain entropy functions (see Lemma \ref{usc}), which is needed in our proof of the upper bound part of Theorem \ref{thm-1.1}. We begin with the following.
\begin{de}
Let $Z$ be a compact metric space. A  function $f:Z\rightarrow
[-\infty,+\infty)$ is called upper semi-continuous if one of the
following equivalent conditions holds:
\begin{enumerate}
\item[{(C1)}] $\limsup\limits_{z_N\rightarrow z} f (z_N) \leq f (z)$ for each $z \in Z$;

\item[{(C2)}] for each $r\in \mathbb{R}$ the set $\{z \in Z : f (z) \ge r\}$ is closed.
\end{enumerate}
\end{de}
By (C2), the infimum of any family of upper semi-continuous functions
is again an upper semi-continuous function; both the sum and supremum of
finitely many upper semi-continuous functions are upper
semi-continuous functions.

\begin{lem}\label{appro} Let  $Z$ be a compact metric space and $f:Z\rightarrow
[-\infty,+\infty)$ be an upper semi-continuous function. Then for any
$\mu\in \mathcal{M}(Z)$,
\begin{equation}
\label{usc-1}\inf \limits_{g\in C(Z),g\ge f}\int_Z
g(z) d \mu(z)=\int_Z f(z) d \mu(z).
\end{equation}
\end{lem}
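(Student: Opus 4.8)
The plan is to establish the two inequalities separately. The direction $\inf_{g\in C(Z),\, g\ge f}\int_Z g\, d\mu \ge \int_Z f\, d\mu$ is essentially immediate: every $g\in C(Z)$ with $g\ge f$ satisfies $\int_Z g\, d\mu \ge \int_Z f\, d\mu$ by monotonicity of the integral (keeping in mind that $f$ is bounded above, being upper semi-continuous on a compact space, so the integral $\int_Z f\, d\mu \in [-\infty, +\infty)$ is well-defined; if it is $-\infty$ there is nothing to prove). Taking the infimum over such $g$ preserves the inequality. So the content of the lemma lies in the reverse inequality, namely that one can approximate $f$ from above by continuous functions in the $L^1(\mu)$-mean.

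For the nontrivial direction, the key step is to construct, for each $n\in\N$, a continuous function $g_n\ge f$ with $\int_Z g_n\, d\mu$ close to $\int_Z f\, d\mu$. First, since $f$ is upper semi-continuous and bounded above by some constant $M$, I can work with $f$ directly (truncation from below is not needed for the upper bound argument, though one may first replace $f$ by $\max(f,-n)$ and let $n\to\infty$ at the end via monotone convergence if it is convenient to deal only with bounded functions). The standard device is the \emph{inf-convolution} (Moreau--Yosida type regularization): define
\[
g_n(z) := \sup_{w\in Z}\bigl(f(w) - n\, d(z,w)\bigr), \qquad z\in Z.
\]
Each $g_n$ is finite (bounded above by $M$), satisfies $g_n\ge f$ (take $w=z$), is nonincreasing in $n$, and is Lipschitz with constant $n$ — hence continuous — because $|n\,d(z,w)-n\,d(z',w)|\le n\,d(z,z')$ uniformly in $w$.

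The crux is then to show $g_n(z)\downarrow f(z)$ pointwise as $n\to\infty$. Fix $z$ and let $L=\lim_n g_n(z)\ge f(z)$; for each $n$ choose $w_n$ with $f(w_n)-n\,d(z,w_n)\ge g_n(z)-1/n$. Since $f$ is bounded above by $M$, we get $n\,d(z,w_n)\le M - g_n(z)+1/n \le M - f(z) + 1/n$, which forces $d(z,w_n)\to 0$, i.e.\ $w_n\to z$. Then upper semi-continuity (condition (C1)) gives $\limsup_n f(w_n)\le f(z)$, while $f(w_n)\ge g_n(z)-1/n\ge L-1/n$, so $L\le f(z)$; combined with $L\ge f(z)$ this yields $g_n(z)\to f(z)$. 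With pointwise monotone convergence $g_n\downarrow f$ and a uniform upper bound $g_1\le M$ (so the sequence $M-g_n\ge 0$ increases to $M-f$), the Monotone Convergence Theorem — or Dominated Convergence if $\int|f|\,d\mu<\infty$, with a preliminary truncation from below otherwise — gives $\int_Z g_n\, d\mu \to \int_Z f\, d\mu$. Hence $\inf_{g\in C(Z),\, g\ge f}\int_Z g\, d\mu \le \lim_n \int_Z g_n\, d\mu = \int_Z f\, d\mu$, completing the proof.

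The main obstacle, such as it is, is purely a matter of care rather than depth: one must handle the possibility $\int_Z f\, d\mu = -\infty$ (or $f$ unbounded below) so that the monotone/dominated convergence step is legitimate — this is dispatched by the truncation $f^{(m)}=\max(f,-m)$, noting each $f^{(m)}$ is still u.s.c., applying the bounded case to get continuous $g\ge f^{(m)}\ge f$ with $\int g\,d\mu$ near $\int f^{(m)}\,d\mu$, and then letting $m\to\infty$ using $\int f^{(m)}\,d\mu\downarrow \int f\,d\mu$ (monotone convergence). Everything else is routine verification that $g_n$ is Lipschitz and that $w_n\to z$.
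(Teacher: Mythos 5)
Your proof is correct, but it takes a more self-contained route than the paper for the essential step. The paper's proof simply cites a reference (Downarowicz--Serafin, Appendix (A7)) for the fact that \eqref{usc-1} holds when $f$ is real-valued upper semi-continuous, and then devotes itself entirely to the passage from real-valued to $[-\infty,+\infty)$-valued $f$ via the truncation $f_M=\max\{f,-M\}$ and Lebesgue's monotone convergence theorem. You instead prove the real-valued (bounded) case from scratch using the Lipschitz inf-convolution $g_n(z)=\sup_{w}\bigl(f(w)-n\,d(z,w)\bigr)$, verifying that $g_n$ is $n$-Lipschitz, dominates $f$, and decreases pointwise to $f$ by the compactness/upper semi-continuity argument with the near-maximizers $w_n\to z$; your final truncation step to handle possibly $-\infty$-valued $f$ is then identical in substance to the paper's. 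What your approach buys is a complete, reference-free argument (and it produces explicit Lipschitz approximants, which is occasionally useful); what the paper's approach buys is brevity, at the cost of outsourcing the real-valued case. One small caveat worth keeping in mind: the inf-convolution step genuinely requires $f$ to be bounded below (otherwise $g_n$ could be identically $-\infty$, and the estimate $n\,d(z,w_n)\le M-f(z)+1/n$ degenerates when $f(z)=-\infty$), so the order of operations must be: truncate first, run the inf-convolution on $f^{(m)}$, then let $m\to\infty$ — which is exactly how you arrange it in your closing paragraph.
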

\begin{proof} It is well known that  the equality \eqref{usc-1} holds when $f$ is a real-valued upper semi-continuous function (see e.g. \cite[Appendix (A7)]{DS} for a proof). In the following we assume that $f$ is an upper semi-continuous function taking values in $[-\infty,+\infty)$.

By the upper semi-continuity of $f$, we have
$\sup_{z\in Z}f(z)=\max_{z\in Z}f(z)<+\infty$. Thus $\int_Z f(z) d
\mu(z)$ is well defined and $\int_Z f(z) d \mu(z)\in
[-\infty,+\infty)$.

For $M\in \mathbb{N}$, let $f_M(z)=\max\{ f(z),-M\}$ for $z\in Z$.
 Then $f_M$ is an upper semi-continuous real-valued function, and thus
 $$\inf \limits_{g\in C(Z),g\ge f_M}\int_Z
g(z) d \mu(z)=\int_Z f_M(z) d \mu(z).$$  Since
$$\sup \limits_{M\in
\mathbb{N}}\sup \limits_{z\in Z} f_M(z)\le \max\left\{\max \limits_{z\in
Z}f(z),0\right\}<+\infty$$ and $ f_M(z)\searrow f(z)$ as $M\rightarrow
+\infty$ for any $z\in Z$, one has
$$\lim_{M\rightarrow +\infty} \int_Z f_M(z) d \mu(z)= \int_Z
\lim_{M\rightarrow +\infty} f_M(z) d \mu(z) = \int_Z  f(z) d
\mu(z)$$ by Lebesgue's monotone convergence theorem. Moreover
\begin{align*}
\inf \limits_{g\in C(Z),g\ge f}\int_Z g(z) d \mu(z)&=\inf_{M\in
\mathbb{N}} \left\{\inf\limits_{g\in C(Z),g\ge f_M} \int_Z g(z) d
\mu(z)\right\}\\
&=\inf_{M\in \mathbb{N}} \int_Z f_M(z) d
\mu(z) \\
&=\lim_{M\rightarrow +\infty} \int_Z f_M(z) d \mu(z)\\
&= \int_Z  f(z) d
\mu(z).
\end{align*}
This completes the proof of the lemma.
\end{proof}

Let $(X,T)$ be a TDS with a compatible metric $d$. For $\epsilon>0$
and $M\in \mathbb{N}$, we define
\begin{equation}
\label{e-2014-6}
\mathcal{P}_X(\epsilon,M)=\{ \alpha: \alpha \text{ is a finite
Borel partition of }X \text{ with } \text{diam}(\alpha)<\epsilon,
\#(\alpha)\le M\},
\end{equation}
where $\text{diam}(\alpha):=\max_{A\in \alpha}\text{diam}(A)$, and
$\#(\alpha)$ stands for the cardinality of $\alpha$.
 Then we define
$$\mathcal{P}_X(\epsilon)=\{\alpha: \alpha \text{ is a finite
Borel partition of }X \text{ with }
\text{diam}(\alpha)<\epsilon\}.$$ It is clear that for any
$\epsilon>0$, there exists $N:=N(\epsilon)\in \mathbb{N}$ such that
$\mathcal{P}_X(\epsilon,M)\neq\emptyset$ for any $M\ge N$. The main result of this section is the following.

\begin{lem}\label{usc} Let $(X,T)$ be a TDS and $\epsilon>0$. Then
\begin{enumerate}
\item  If  $M\in \mathbb{N}$ with
$\mathcal{P}_X(\epsilon,M)\neq \emptyset$, then the map
\begin{equation}
\label{e-HEML}
\theta\in
\mathcal{M}(X)\mapsto H_\theta(\epsilon,M;\ell):=\inf_{\alpha\in
\mathcal{P}_X(\epsilon,M)}
\frac{1}{\ell}H_\theta\left(\bigvee_{i=0}^{\ell-1}T^{-i}\alpha\right)
\end{equation} is
upper semi-continuous from $\mathcal{M}(X)$ to $[0,\log M]$ for
each $\ell\in \mathbb{N}$.

\item The map $$\theta\in
\mathcal{M}(X)\mapsto H_\theta(\epsilon;\ell):=\inf_{\alpha\in
\mathcal{P}_X(\epsilon)}
\frac{1}{\ell}H_\theta\left(\bigvee_{i=0}^{\ell-1}T^{-i}\alpha\right)$$ is a
bounded upper semi-continuous non-negative function  for each
$\ell\in \mathbb{N}$.

\item The map $$\mu\in \mathcal{M}(X,T)\mapsto
h_\mu(T,\epsilon):=\inf_{\alpha\in
\mathcal{P}_X(\epsilon)}h_\mu(T,\alpha)$$ is a bounded upper
semi-continuous non-negative function.
\end{enumerate}
\end{lem}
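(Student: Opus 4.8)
I will prove part (1), which carries the real content, and then deduce (2) and (3).

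\emph{Reductions and bounds.} Since $X$ is compact metric, $\mathcal{M}(X)$ is metrizable in the weak-$*$ topology, so throughout it suffices to verify the sequential condition (C1). Fix once and for all a finite Borel partition $\alpha_0$ of $X$ with $\mathrm{diam}(\alpha_0)<\epsilon$ (one exists by compactness). Then $0\le H_\theta(\epsilon;\ell)\le\frac1\ell H_\theta\left(\bigvee_{i=0}^{\ell-1}T^{-i}\alpha_0\right)\le\log\#\alpha_0$, $0\le h_\mu(T,\epsilon)\le H_\mu(\alpha_0)<\infty$, and $H_\theta(\epsilon,M;\ell)\le\frac1\ell\sum_{i=0}^{\ell-1}H_{\theta\circ T^{-i}}(\alpha)\le\log M$ for any $\alpha\in\mathcal{P}_X(\epsilon,M)$, which yields the stated ranges and non-negativity. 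For the reductions I use that an infimum of an arbitrary family of upper semi-continuous functions is again upper semi-continuous. Since $\mathcal{P}_X(\epsilon)=\bigcup_M\mathcal{P}_X(\epsilon,M)$ we have $H_\theta(\epsilon;\ell)=\inf_M H_\theta(\epsilon,M;\ell)$, so (2) follows from (1). For $\mu\in\mathcal{M}(X,T)$ the sequence $\ell\mapsto H_\mu\left(\bigvee_{i=0}^{\ell-1}T^{-i}\alpha\right)$ is subadditive, hence $h_\mu(T,\alpha)=\inf_\ell\frac1\ell H_\mu\left(\bigvee_{i=0}^{\ell-1}T^{-i}\alpha\right)$ and so $h_\mu(T,\epsilon)=\inf_\ell H_\mu(\epsilon;\ell)$ on $\mathcal{M}(X,T)$, whence (3) follows from (2).

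\emph{Strategy for (1).} Fix $M$, $\ell$, and a sequence $\theta_N\to\theta_0$ in $\mathcal{M}(X)$, and put $\mu_0:=\sum_{i=0}^{\ell-1}\theta_0\circ T^{-i}$, a finite Borel measure. Call a finite Borel partition $\xi$ of $X$ \emph{$\mu_0$-regular} if $\mu_0(\partial B)=0$ for every $B\in\xi$. The argument rests on two points. First, if $\xi$ is $\mu_0$-regular then every atom $C$ of $\bigvee_{i=0}^{\ell-1}T^{-i}\xi$ has $\theta_0(\partial C)=0$: indeed $\partial(T^{-i}B)\subseteq T^{-i}(\partial B)$ and $\theta_0(T^{-i}\partial B)\le\mu_0(\partial B)$ for $0\le i\le\ell-1$, so $\partial C\subseteq\bigcup_i T^{-i}(\partial B_{j_i})$ is $\theta_0$-null; hence, by the portmanteau theorem, $\theta_N(C)\to\theta_0(C)$ for each such $C$, and since there are finitely many atoms and $t\mapsto-t\log t$ is continuous on $[0,1]$, the map $\theta\mapsto\frac1\ell H_\theta\left(\bigvee_{i=0}^{\ell-1}T^{-i}\xi\right)$ is continuous at $\theta_0$. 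Second, a \emph{regularization} step: given $\alpha=\{A_1,\dots,A_m\}\in\mathcal{P}_X(\epsilon,M)$ and $\delta>0$, there is $\beta=\{B_1,\dots,B_m\}\in\mathcal{P}_X(\epsilon,M)$ with $\max_j\mu_0(\partial B_j)<\delta$ and $\sum_j\mu_0(A_j\triangle B_j)<\delta$. Granting these, fix $\gamma>0$; choose $\alpha\in\mathcal{P}_X(\epsilon,M)$ with $\frac1\ell H_{\theta_0}\left(\bigvee_{i=0}^{\ell-1}T^{-i}\alpha\right)<H_{\theta_0}(\epsilon,M;\ell)+\gamma/3$, and then a $\delta$-regularization $\beta$ of $\alpha$ with $\delta$ so small — using the uniform continuity of $t\mapsto-t\log t$ on $[0,1]$, the bound $\#\left(\bigvee_{i=0}^{\ell-1}T^{-i}\beta\right)\le M^\ell$, the standard estimate for the change of $H$ under perturbing a partition by a set of small measure, and the portmanteau bound $\limsup_N|\theta_N(C)-\theta_0(C)|\le\theta_0(\partial C)<\ell\delta$ for the atoms $C$ of $\bigvee_{i=0}^{\ell-1}T^{-i}\beta$ — that $\frac1\ell H_{\theta_0}\left(\bigvee_{i=0}^{\ell-1}T^{-i}\beta\right)<H_{\theta_0}(\epsilon,M;\ell)+2\gamma/3$ and $\limsup_N\frac1\ell H_{\theta_N}\left(\bigvee_{i=0}^{\ell-1}T^{-i}\beta\right)\le\frac1\ell H_{\theta_0}\left(\bigvee_{i=0}^{\ell-1}T^{-i}\beta\right)+\gamma/3$. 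Since $\beta\in\mathcal{P}_X(\epsilon,M)$,
$$\limsup_N H_{\theta_N}(\epsilon,M;\ell)\le\limsup_N\frac1\ell H_{\theta_N}\left(\bigvee_{i=0}^{\ell-1}T^{-i}\beta\right)<H_{\theta_0}(\epsilon,M;\ell)+\gamma,$$
and letting $\gamma\to0$ proves (1).

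\emph{The main obstacle.} The one genuinely nontrivial ingredient is the regularization step: producing $\beta$ with the \emph{same} number of atoms as $\alpha$, with diameters still below $\epsilon$, and with nearly $\mu_0$-null boundaries — the cardinality constraint is what rules out the easy device of intersecting $\alpha$ with a fine regular partition. My plan is the classical inner-compact-approximation construction. Choose compacts $K_j\subseteq A_j$ with $\mu_0(A_j\setminus K_j)<\delta/m$; being pairwise disjoint they satisfy $d_0:=\min_{i\ne j}d(K_i,K_j)>0$, so the open sets $V_j:=\{x\in X:d(x,K_j)<\rho\}$ are pairwise disjoint once $\rho<d_0/2$; shrink $\rho$ further so that $\rho<\tfrac12(\epsilon-\mathrm{diam}\,\alpha)$. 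Put $W:=X\setminus\bigcup_j V_j$ and $B_j:=V_j\cup(W\cap A_j)$. Then $\beta=\{B_1,\dots,B_m\}$ is a finite partition with at most $m\le M$ atoms, each $B_j$ is contained in the $\rho$-neighbourhood of $A_j$ so $\mathrm{diam}\,B_j<\epsilon$, and $\mu_0(A_j\triangle B_j)\le\mu_0(V_j\setminus A_j)+\sum_{i\ne j}\mu_0(A_j\cap V_i)\to0$ as $\rho\to0$ (because $V_j\setminus A_j\downarrow\overline{K_j}\setminus A_j=\emptyset$ and $A_j\cap V_i\downarrow A_j\cap\overline{K_i}=\emptyset$), so a final shrinking of $\rho$ forces $\sum_j\mu_0(A_j\triangle B_j)<\delta$. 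Finally $\partial B_j\subseteq W$: indeed $V_j\subseteq\mathrm{int}\,B_j$, and for $i\ne j$ the open set $V_i$ is disjoint from $B_j$ and hence from $\overline{B_j}$, so $\partial B_j$ misses every $V_i$; consequently $\mu_0(\partial B_j)\le\mu_0(W)=\sum_i\mu_0(W\cap A_i)\le\sum_i\mu_0(A_i\setminus K_i)<\delta$, using $W\cap K_i=\emptyset$. Verifying the inclusion $\partial B_j\subseteq W$ — the only place where one must watch interiors and closures — is the single delicate point; everything else is routine bookkeeping with partitions and the portmanteau theorem.
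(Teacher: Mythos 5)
Your proof is correct, and its overall architecture coincides with the paper's: pick a near-optimal $\alpha$, replace it by a partition $\beta$ of the same cardinality and the same diameter bound whose atoms have negligible $\mu_0$-boundary and small symmetric difference from $\alpha$, then exploit weak-$*$ convergence on (near-)continuity sets; parts (2) and (3) are deduced exactly as in the paper, as infima of the upper semi-continuous functions from (1) together with $h_\mu(T,\alpha)=\inf_\ell \frac1\ell H_\mu\big(\bigvee_{i=0}^{\ell-1}T^{-i}\alpha\big)$. Where you genuinely diverge is the regularization step, which is indeed the only nontrivial ingredient. The paper produces $\beta$ with $\mu_0(\partial\beta)=0$ \emph{exactly}: it takes open sets $V_j\supseteq A_j$ of diameter $<\epsilon$ with $\mu_0(V_j\setminus A_j)$ small, covers $X$ by finitely many balls of radius at most a third of a Lebesgue number of $\{V_j\}$ chosen so that each ball has $\mu_0$-null boundary, and groups these balls into the atoms $B_j$; the exact nullity makes $\theta\mapsto H_\theta\big(\bigvee_{i=0}^{\ell-1}T^{-i}\beta\big)$ continuous at $\theta_0$, so no further estimate is needed. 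You instead approximate from the inside by compacts $K_j\subseteq A_j$, take pairwise disjoint $\rho$-neighborhoods $V_j$ and return the remainder $W$ to the original atoms, obtaining only $\mu_0(\partial B_j)\le\mu_0(W)<\delta$; you then pay for the non-null boundary with the quantitative portmanteau bound $\limsup_N|\theta_N(C)-\theta_0(C)|\le\theta_0(\partial C)$, the uniform continuity of $t\mapsto -t\log t$, and the cardinality bound $M^\ell$ on the atoms of the join. Both routes are sound: yours dispenses with the Lebesgue-number argument and with the selection of null-boundary balls (and the inclusion $\partial B_j\subseteq W$, the one delicate point, is verified correctly), at the cost of one additional layer of $\delta$-management in the final entropy comparison; the paper's version buys exact continuity at $\theta_0$ and a slightly shorter conclusion.
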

\begin{proof} We first prove (1). Let $M\in \mathbb{N}$ with
$\mathcal{P}_X(\epsilon,M)\neq \emptyset$, and  $\ell\in
\mathbb{N}$. Clearly,  the map $H_{\bullet}(\epsilon,M;\ell)$ is defined from
$\mathcal{M}(X)$ to $[0,\log M]$.
Let $\theta_0\in \mathcal{M}(X)$. It is sufficient to show
that the map $H_{\bullet}(\epsilon,M;\ell)$ is upper semi-continuous at
$\theta_0$.

Let  $\delta>0$. Then there exists $\alpha\in
\mathcal{P}_X(\epsilon,M)$ such that
\begin{equation}
\label{e-diff}
\frac{1}{\ell}H_{\theta_0}\left(\bigvee_{i=0}^{\ell-1}T^{-i}\alpha\right)\le
H_{\theta_0}(\epsilon,M;\ell)+\delta.
\end{equation}
 Let $\alpha=\{
A_1,\ldots,A_u\}$. Then $u\le M$ and $\text{diam}(A_i)<\epsilon$
for $i=1,2,\ldots,u$. By Lemma 4.15 in \cite{Wal82}, there exists
$\delta_1= \delta_1(u,\delta)
> 0$ such that whenever $\gamma_1=\{E_1,\ldots, E_u\},\gamma_2=\{F_1,\ldots, F_u\}$  are two Borel partitions
of $X$ with $\sum_{j=1}^u \sum_{i=0}^{\ell-1} \theta_0\circ
T^{-i}(E_j\Delta F_j)<\delta_1$,  then
\begin{equation}
\label{entropy}
\begin{split}
\frac{1}{\ell}& \left|H_{\theta_0}\left(\bigvee_{i=0}^{\ell-1}T^{-i}\gamma_1\right)-H_{\theta_0}\left(\bigvee_{i=0}^{\ell-1}T^{-i}\gamma_2\right)\right|
\\
&\le \frac{1}{\ell}\sum_{i=0}^{\ell-1} \left|H_{\theta_0\circ
T^{-i}}(\gamma_1|\gamma_2)+H_{\theta_0\circ
T^{-i}}(\gamma_2|\gamma_1)\right|<\delta.
\end{split}
\end{equation}
 Write
$\eta=\sum_{i=0}^{\ell-1} \theta_0\circ T^{-i}$. Next, we are going to
construct a Borel partition $\beta=\{ B_1,\ldots,B_u\}$ of $X$ so that
$\text{diam}(\beta)<\epsilon$, $\sum_{j=1}^u \eta(A_j\Delta
B_j)<\delta_1$ and $\eta(\partial \beta)=0$.

In fact, note that $\eta(X)=\ell<\infty$, hence $\eta$ is regular on
$X$. Thus there exist open subsets $V_j$ of $X$
such that $A_j\subseteq V_j$, $\text{diam}(V_j)<\epsilon$ and
$\eta(V_j\setminus A_j)<\frac{\delta_1}{u^2}$ for $j=1,\ldots,u$.
Clearly, $\mathcal{V}:=\{ V_1,\ldots,V_u\}$ is an open cover.
Let $t>0$ be a Lebesgue number of $\mathcal{V}$. For any $x\in X$,
there exists $0<t_x\le \frac{t}{3}$ such that $\eta(\partial
B(x,t_x))=0$.
  Thus $\{
B(x,t_x):x\in X\}$ forms an open cover of $X$. Take its finite
subcover $\{ B(x_i,t_{x_i})\}_{i=1}^r$, that is, $\bigcup_{i=1}^r
B(x_i,t_{x_i})=X$. Obviously, each $B(x_i,t_{x_i})$ is a subset of
some $V_{j(i)}$, $j(i)\in \{1,\ldots,u\}$ since $t_{x_i}\le
\frac{t}{3}$.

Let $I_j=\{ i\in \{1,\ldots,r\}: B(x_i,t_{x_i})\subset V_j\}$ for
$j=1,\ldots,u$. Then $\bigcup_{j=1}^u I_j=\{1,\ldots,r\}$. Put
$B_1=\bigcup_{i\in I_1} B(x_i,t_{x_i})$ and $B_j=\left(\bigcup_{i\in I_j}
B(x_i,t_{x_i})\right)\setminus \bigcup_{m=1}^{j-1}B_m$ inductively for
$j=2,\ldots,u$. It is clear that $\beta=\{ B_1,\ldots,B_u\}$ is
a Borel partition of $X$ with $B_j\subseteq V_j$ and $\eta(\partial
B_j)=0$ for $j=1,\ldots,u$. Now for each $j\in \{1,\ldots,u\}$,
\begin{align*}
A_j\Delta B_j&= (B_j\setminus A_j)\cup (A_j\cap (X\setminus
B_j))\subseteq (V_j\setminus A_j)\cup \bigcup_{k\neq j}(A_j \cap
B_k)\\
&\subseteq (V_j\setminus A_j)\cup \bigcup_{k\neq j}(A_j \cap
V_k)\subseteq (V_j\setminus A_j)\cup \bigcup_{k\neq j}(A_j \cap
(V_k\setminus A_k))\\
&\subseteq \bigcup_{k=1}^u (V_k\setminus A_k).
\end{align*}
Thus $\sum_{j=1}^u \eta(A_j\Delta
B_j)\le u\sum_{k=1}^u \eta (V_k\setminus A_k)<\delta_1$.

Summing up, we have constructed a Borel partition $\beta=\{B_1,\ldots, B_u\}\in \mathcal{P}_X(\epsilon,M)$ so that
$\sum_{j=1}^u\eta(B_j\Delta A_j)<\delta_1$ and $\eta(\partial \beta)=0$.
Now on the one hand,  by \eqref{entropy} and \eqref{e-diff}, we have
$$\frac{1}{\ell}H_{\theta_0}\left(\bigvee_{i=0}^{\ell-1}T^{-i}\beta\right)\le
\frac{1}{\ell}H_{\theta_0}\left(\bigvee_{i=0}^{\ell-1}T^{-i}\alpha\right)+\delta\le
H_{\theta_0}(\epsilon,M;\ell)+2\delta.$$ On the other hand, since
$\eta(\partial \beta)=0$, one has $\theta_0(T^{-i}\partial \beta)=0$
for $i=0,1,\cdots,\ell-1$. As $ \partial T^{-i}A\subseteq
T^{-i}\partial A$ for any $A\subseteq X$, one has $\theta_0(\partial
T^{-i}\beta)=0$ for $i=0,1,\cdots,\ell-1$. Moreover note that
$\partial(A\cap B)\subseteq (\partial A)\cap (\partial B)$ for any
$A,B\subseteq X$, we have
$\theta_0(\partial(\bigvee_{i=0}^{\ell-1}T^{-i}\beta))=0$. Thus the
map $\theta\in \mathcal{M}(X)\mapsto
\frac{1}{\ell}H_\theta(\bigvee_{i=0}^{\ell-1}T^{-i}\beta)$ is
continuous at the point $\theta_0$. Therefore
\begin{align*}
\limsup_{\theta\rightarrow \theta_0} H_\theta(\epsilon,M;\ell)&\le
 \limsup_{\theta\rightarrow \theta_0}
\frac{1}{\ell}H_\theta \left(\bigvee_{i=0}^{\ell-1}T^{-i}\beta\right)\\
&=\frac{1}{\ell}H_{\theta_0}\left(\bigvee_{i=0}^{\ell-1}T^{-i}\beta\right)\\
&\le H_{\theta_0}(\epsilon,M;\ell)+2\delta.
\end{align*}
Finally letting $\delta\searrow 0$, we see that
the map $H_{\bullet}(\epsilon,M;\ell)$ is upper semi-continuous at
$\theta_0$. This completes the proof of (1).

\medskip
Now we turn to the proof of (2). Let $\ell\in \mathbb{N}$. Since
$\mathcal{P}_X(\epsilon)=\bigcup_{M\in
\mathbb{N},\mathcal{P}_X(\epsilon,M)\neq
\emptyset}\mathcal{P}_X(\epsilon,M)$, we have
$$H_\theta(\epsilon;\ell)=\inf_{M\in
\mathbb{N},\mathcal{P}_X(\epsilon,M)\neq
\emptyset}H_\theta(\epsilon,M;\ell)$$ for $\theta\in
\mathcal{M}(X)$. Moreover, by (1) and the fact that the infimum of
any family of upper semi-continuous functions is again an upper
semi-continuous one, we know that the map
$$\theta\in \mathcal{M}(X)\mapsto
H_\theta(\epsilon;\ell):=\inf_{\alpha\in \mathcal{P}_X(\epsilon)}
\frac{1}{\ell}H_\theta\left(\bigvee_{i=0}^{\ell-1}T^{-i}\alpha\right)$$ is a
bounded upper semi-continuous non-negative function. This proves (2).

\medskip
In the end we prove (3). Note that
\begin{align*}
h_\mu(T,\epsilon)&=\inf \limits_{\alpha\in
\mathcal{P}_X(\epsilon)}h_\mu(T,\alpha)=\inf \limits_{\alpha\in
\mathcal{P}_X(\epsilon)} \inf_{\ell\ge
1}\frac{1}{\ell}H_\mu\left(\bigvee_{i=0}^{\ell-1}T^{-i}\alpha\right)\\
&=\inf_{\ell\ge 1}\inf \limits_{\alpha\in \mathcal{P}_X(\epsilon)}
\frac{1}{\ell}H_\mu\left(\bigvee_{i=0}^{\ell-1}T^{-i}\alpha\right)=\inf_{\ell\ge
1}H_\mu(\epsilon;\ell)
\end{align*}
 for $\mu\in \mathcal{M}(X,T)$.  Using (2) and
 the fact that the infimum of any family of upper semi-continuous
functions is again an upper semi-continuous one, we know that the map
$$\mu\in \mathcal{M}(X,T)\mapsto
h_\mu(T,\epsilon)$$ is a bounded upper semi-continuous non-negative
function. This completes the proof of the lemma.
\end{proof}

\section{Weighted topological pressures and a dynamical Frostman lemma}\label{s-3}

In this section we introduce the definition of weighted topological pressure for (asymptotically) sub-additive potentials for general topological dynamical systems. Moreover, using some ideas from geometric measure theory, we establish a dynamical Frostman lemma (see Lemma \ref{lem-Frost}) for weighted topological pressure, which plays a key role in our proof of Theorem \ref{thm-1.1}.

\subsection{Weighted topological pressures for sub-additive potentials}
\label{s-3.1}
Assume that $(X, T)$ is a TDS. We say that a sequence   $\Phi=\{\log \phi_n\}_{n=1}^\infty$ of functions on $X$ is  a {\it sub-additive potential} if
each $\phi_n$ is an upper semi-continuous nonnegative-valued function on $X$ such that
\begin{equation}
\label{e-1.0}
 0\leq \phi_{n+m}(x)\leq \phi_n(x)\phi_m(T^nx),\qquad \forall\;
x\in X, \; m,n\in \N.
\end{equation}
In particular,  $\Phi$ is called {\it additive} if each $\phi_n$ is a continuous positive-valued function so that
$\phi_{n+m}(x)=\phi_n(x)\phi_m(T^nx)$ for all $x\in X$ and $m,n\in \N$; in this case, there is a
continuous real function $g$
on $X$ such that $\phi_n(x)=\exp(\sum_{i=0}^{n-1}g(T^ix))$ for each $n$.

Let $k\geq 2$. Assume that  $(X_i, d_i)$, $i=1,\ldots, k$, are compact metric spaces, and  $(X_i, T_i)$ are TDS's. Moreover, assume that for each $1\leq i\leq k-1$,   $(X_{i+1}, T_{i+1})$ is a factor of $(X_i, T_i)$ with a factor map $\pi_i: X_i\to X_{i+1}$.

Let $\ba=(a_1,\ldots, a_k)\in \R^k$ with $a_1>0$ and $a_i\geq 0$ for $2\leq i\leq k$.
For any $n\in \N$ and $\epsilon>0$, define
\begin{equation}
\label{e-0.1}
{\mathcal T}^\ba_{n,\epsilon}:=\{A\subset X_1:\; A \mbox { is Borel subset of } B^\ba_n(x,\epsilon) \mbox{ for some }x\in X_1\},
\end{equation}
where $B^\ba_n(x,\epsilon)$ is defined as in Definition \ref{de-1.1}.

 Let $\Phi=\{\log \phi_n\}_{n=1}^\infty$ be a sub-additive potential on $X_1$.  Let $Z\subseteq X_1$, $s \geq 0$ and $N\in \N$, define $$\Lambda^{\ba, s}_{\Phi, N, \epsilon}(Z) =\inf\sum_j
\exp\left(-sn_j+\frac{1}{a_1}\sup_{x\in A_j}\phi_{\lceil a_1n_j\rceil}(x)\right ),$$
 where  the infimum is taken over all countable collections $\Gamma=\{(n_j, A_j)\}_j$ with $n_j\geq N$, $A_j\in {\mathcal T}^\ba_{n_j,\epsilon}$ and $\bigcup_jA_j\supseteq Z$.
  The quantity $\Lambda^{\ba, s}_{\Phi, N, \epsilon}(Z) $ does not decrease with $N$, hence the following
limit exists: $$\Lambda^{\ba, s}_{\Phi, \epsilon}(Z)  = \lim_{N\to \infty} \Lambda^{\ba, s}_{\Phi, N, \epsilon}(Z) .$$ There exists a critical value
of the parameter $s$, which we will denote by $P^\ba(T_1,\Phi, Z, \epsilon)$,
where $\Lambda^{\ba, s}_{\Phi, \epsilon}(Z)$ jumps from $\infty$ to $0$, i.e.
\[
\Lambda^{\ba, s}_{\Phi,  \epsilon}(Z)  = \left\{
\begin{array}{ll}
0, & s > P^\ba (T_1,\Phi,Z,\epsilon),\\
\infty,& s < P^\ba (T_1,\Phi, Z, \epsilon).
\end{array}
\right.
\]
Clearly  $P^\ba(T_1,\Phi,Z, \epsilon)$ does not decrease with $\epsilon$, and  hence the following limit exists, $$P^\ba (T_1,\Phi, Z) = \lim_{\epsilon\to 0}P^\ba (T_1,\Phi, Z, \epsilon).$$

\begin{de} We  call
$P^\ba (T_1,\Phi):=P^\ba (T_1,\Phi, X_1)$ the {\it $\ba$-weighted topological pressure of $\Phi$ with respect to
$T_1$} or, simply, the {\it $\ba$-weighted topological pressure of $\Phi$}, when there
is no confusion about $T_1$.
\end{de}

\begin{de}
Let $f\in C(X_1)$. Define $\Phi=\{\log \phi_n\}_{n=1}^\infty$ by $\phi_n(x)=\exp(\sum_{j=0}^{n-1}f(T_1^jx))$. In this case, $\Phi$ is additive. We just define
 $P^\ba (T_1,f):=P^\ba (T_1,\Phi)$.
\end{de}

Taking $f\equiv 0$, one can see that  $P^\ba (T_1,0)=\htop^\ba(T_1)$.  Let  $\Phi=\{\log \phi_n\}_{n=1}^\infty$ be a sub-additive potential on $X_1$. For any $\mu\in \M(X_1, T_1)$, define
\begin{equation} \label{e-1.2} \Phi_*(\mu):=\lim_{n\to
\infty}\int \frac{\log \phi_n(x)}{n}\; d\mu(x).
\end{equation}
 This limit always exists and takes values in $\R\cup \{-\infty\}$ (cf. \cite[Theorem 10.1]{Wal75}).

In our proof of Theorem \ref{thm-1.1}, we need the following dynamical Frostman lemma.
\begin{lem}
\label{lem-Frost}
 Let $\Phi=\{\log \phi_n\}_{n=1}^\infty$ be a sub-additive potential on $X_1$.  Suppose that $P^{\ba}(T_1,\Phi)>0$. Then  for any $0<s<P^{\ba}(T_1,\Phi)$, there exist a Borel probability measure $\nu$ on $X_1$ and $\epsilon>0$, $N\in \N$ such that for any $x\in X_1$ and $n\geq N$ we have
\begin{equation}
\label{e-dyn**}
\nu(B_n^{\ba}(x,\epsilon))\leq
 \exp (-sn)\sup_{y\in B_n^{\ba}(x,\epsilon)} (\phi_{\lceil a_1n\rceil}(y))^{1/a_1}.
\end{equation}
\end{lem}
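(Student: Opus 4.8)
The plan is to run the classical Frostman / mass-distribution argument in its dynamical incarnation, using the weighted Carathéodory structure built from the sets $\mathcal T^\ba_{n,\epsilon}$. Since $s<P^\ba(T_1,\Phi)=\lim_{\epsilon\to0}P^\ba(T_1,\Phi,X_1,\epsilon)$ and $P^\ba(T_1,\Phi,X_1,\epsilon)$ is non-decreasing as $\epsilon\searrow0$, I can first fix $\epsilon>0$ small enough that $s<P^\ba(T_1,\Phi,X_1,\epsilon)$, hence $\Lambda^{\ba,s}_{\Phi,\epsilon}(X_1)=+\infty$, and so there is $N\in\N$ with $\Lambda^{\ba,s}_{\Phi,N,\epsilon}(X_1)>1$ (after possibly shrinking; any value $>1$ will do, and one normalizes at the end). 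I will henceforth work with this $\epsilon$ and this $N$.

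The core is a combinatorial covering lemma, proved by the standard weighting/iteration trick: for every compact $Z\subseteq X_1$ one has, writing $w(n,A):=\exp\bigl(-sn+\tfrac1{a_1}\sup_{x\in A}\log\phi_{\lceil a_1 n\rceil}(x)\bigr)$ for $A\in\mathcal T^\ba_{n,\epsilon}$,
\[
\Lambda^{\ba,s}_{\Phi,N,\epsilon}(X_1)\ \le\ c\cdot\sup\Bigl\{\,\nu_Z(\text{everything}) : \nu_Z \text{ a suitable pre-measure supported near }Z\,\Bigr\},
\]
more precisely the duality statement: there exists a Borel probability measure $\nu$ on $X_1$ such that for every $A\in\mathcal T^\ba_{n,\epsilon}$ with $n\ge N$,
\[
\nu(A)\ \le\ w(n,A)\ =\ \exp(-sn)\,\Bigl(\sup_{y\in A}\phi_{\lceil a_1 n\rceil}(y)\Bigr)^{1/a_1}.
\]
To produce such a $\nu$ I would use the finite-subcover reduction (compactness of $X_1$) plus an inductive ``distribution of mass'' construction on scales $n=N,N+1,N+2,\dots$: begin by assigning to $X_1$ the total mass, and at each stage refine by distributing the current mass of each cell among the $\mathcal T^\ba$-sets at the next level proportionally to the weights $w$, normalized so that no $\mathcal T^\ba$-set ever receives more than its weight; the condition $\Lambda^{\ba,s}_{\Phi,N,\epsilon}(X_1)>1$ is exactly what guarantees that the normalization never forces the total mass below a fixed positive constant, so a weak-$*$ limit of the resulting (finitely additive, then extended) measures is a genuine probability measure — or rather, one gets a nonzero measure which is then normalized to a probability measure, at the cost of replacing $s$ by any $s'$ slightly smaller, which is harmless since $s<P^\ba$ was arbitrary below the critical value. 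Once $\nu(A)\le w(n,A)$ holds for all $A\in\mathcal T^\ba_{n,\epsilon}$, specializing to $A=B^\ba_n(x,\epsilon)$ (which is itself a member of $\mathcal T^\ba_{n,\epsilon}$) gives exactly \eqref{e-dyn**}.

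The delicate point, and the step I expect to be the main obstacle, is the passage from the ``there exists a finitely additive set function dominated by $w$ with total mass bounded below'' to ``there exists a countably additive Borel probability measure with the same domination property.'' In the purely metric Frostman lemma this is handled on a compact metric space by a weak-$*$ compactness argument on the finite approximating measures, but here the covering family $\mathcal T^\ba_{n,\epsilon}$ is not the family of balls of a metric — the $\ba$-weighted Bowen balls at level $n$ have geometry that mixes iterates of $T_1,\dots,T_k$ with the non-uniform time windows $[t_{i-1}(n),t_i(n))$. So I must check that: (i) the relevant covers by $\mathcal T^\ba$-sets can be taken finite (compactness handles this since each $B^\ba_n(x,\epsilon)$ has nonempty interior relative to a slightly smaller ball, or one works with $B^\ba_n(x,\epsilon/2)\subseteq\mathrm{int}\,B^\ba_n(x,\epsilon)$); (ii) the limiting measure still dominates $w$ on all of $\mathcal T^\ba_{n,\epsilon}$ for $n\ge N$, which requires a semicontinuity input — here the upper semicontinuity of the $\phi_n$ (built into the definition of a sub-additive potential) is exactly what makes $\sup_{y\in A}\phi_{\lceil a_1n\rceil}(y)$ behave well under the relevant limits, and a standard argument (slightly enlarging $A$ to an open set on which the sup is not much larger, using regularity of Borel measures on the compact metric space $X_1$) closes the gap. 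Modulo these checks, the argument is the word-for-word dynamical transcription of the classical mass-distribution principle, and I would present it in that order: fix $\epsilon,N$; state and prove the combinatorial mass-distribution lemma producing $\nu$; verify $\nu$ is a probability measure via the lower mass bound; and conclude \eqref{e-dyn**} by taking $A=B^\ba_n(x,\epsilon)$.
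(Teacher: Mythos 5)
There is a genuine gap, and it sits exactly where you flag the ``delicate point,'' but it is worse than a missing compactness check. Your construction --- inductively distributing mass among the sets of $\mathcal{T}^\ba_{n,\epsilon}$ at successive scales $n=N,N+1,\dots$, ``proportionally to the weights, normalized so that no set ever receives more than its weight'' --- is the classical net-measure proof of Frostman's lemma, and it requires the covering family to be a \emph{net}: each set at level $n+1$ must sit inside a unique set at level $n$, as dyadic cubes do in $\R^d$. The families $\mathcal{T}^\ba_{n,\epsilon}$ have no such structure: weighted Bowen balls at consecutive levels (and even at the same level) overlap arbitrarily, so ``the $\mathcal{T}^\ba$-sets at the next level inside a given cell'' is not a partition of anything, the redistribution step is not defined, and the claim that $\Lambda^{\ba,s}_{\Phi,N,\epsilon}(X_1)>1$ forces the surviving mass to stay bounded below has no proof. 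In a general compact metric space this obstruction is precisely why Frostman's lemma is proved by functional analysis rather than by mass distribution.

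The paper's route supplies two ingredients that your proposal omits. First, it replaces $\Lambda^{\ba,s}$ by the Federer-type \emph{weighted} set function $\W^{\ba,s}_{\Phi,N,\epsilon}(f)=\inf\sum_j c_j\exp(-sn_j+\frac{1}{a_1}\sup_{A_j}\log\phi_{\lceil a_1n_j\rceil})$ over weighted covers $\sum_j c_j\chi_{A_j}\ge f$; this quantity is subadditive and positively homogeneous in $f\ge 0$, so Hahn--Banach applied to $p(f)=\W^{\ba,s}_{\Phi,N,\epsilon}(f)/c$ followed by Riesz representation (Howroyd's argument, Lemma 3.10) produces the measure $\nu$ with $\nu(K)\le c^{-1}e^{-ns}g_n(K)$ for compact $K\subseteq B^\ba_n(x,\epsilon)$ --- this is the ``duality statement'' you assert but do not prove. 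Second, one must know that $\W^{\ba,s}$ and $\Lambda^{\ba,s}$ have the same critical exponent, so that $s<P^\ba(T_1,\Phi)$ actually yields $\W^{\ba,s}_{\Phi,N,\epsilon}(X_1)\ge 1$ for suitable $\epsilon,N$; this is Proposition 3.7, proved via a Vitali/Federer covering lemma (Lemma 3.9) at the cost of replacing $\epsilon$ by $6\epsilon$ and $s$ by $s+\delta$, together with the entropy bound of Lemma 3.12 to control the exceptional set where $\phi_{\lceil a_1n\rceil}$ is small. Without these two steps the argument does not close; with them, your opening reduction (fix $\epsilon$, then $N$, then specialize to $A=B^\ba_n(x,\epsilon)$ and use inner regularity and upper semicontinuity of $\phi_n$) is exactly how the paper finishes.
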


A non-weighted version of the above lemma was first proved by the authors  in the particular case when $\phi_n\equiv 1$ (see \cite[Lemma 3.4]{FeHu12}), using some ideas and techniques in geometric measure theory.
In the remainder  of this section, we will give the detailed proof of Lemma \ref{lem-Frost}, by adapting and elaborating the approach in \cite{FeHu12}. A key ingredient of our proof is the notion of  average weighted topological pressure, which is an analogue of weight Hausdorff measure in geometric measure theory.   The definition of this notion and some of its properties will be given in next subsection. In Subsection~\ref{S-3}, we prove Lemma \ref{lem-Frost}.

\subsection{Average weighted topological pressures}
\label{s-average}

 Let $\Phi=\{\log \phi_n\}_{n=1}^\infty$ be a sub-additive potential on $X_1$. For any function $f: X_1\to [0, \infty)$, for $s \geq 0$ and $N\in \N$, define
 \begin{equation}\label{e-19}
  \W_{\Phi, N, \epsilon}^{\ba, s} (f) =\inf\sum_j
c_j\exp\left(-sn_j+\frac{1}{a_1}\sup_{x\in A_j}\log \phi_{\lceil a_1n_j\rceil}(x)\right ),
\end{equation}
 where  the infimum is taken over all countable collections $\Gamma=\{(n_j, A_j, c_j)\}_j$ with $n_j\geq N$, $A_j\in {\mathcal T}^\ba_{n_j,\epsilon}$, $0<c_j<\infty$, and $$\sum_{j} c_j\chi_{A_j}\geq f,$$
 where $\chi_A$ denotes the characteristic function of $A$, i.e., $\chi_A(x)=1$ if $x\in A$ and $0$ if $x\in X_1\backslash A$.

 For $Z\subseteq X_1$, we set $\W^{\ba,s}_{\Phi, N,\epsilon}(Z)=\W^{\ba,s}_{\Phi, N,\epsilon}(\chi_Z)$.
  The quantity $\W^{\ba,s}_{\Phi, N,\epsilon}(Z)$ does not decrease with $N$, hence the following
limit exists: $$\W^{\ba,s}_{\Phi, \epsilon}(Z) = \lim_{N\to \infty} \W^{\ba,s}_{\Phi, N,\epsilon}(Z).$$ There exists a critical value
of the parameter $s$, which we will denote by $P^\ba_{W}(T_1,\Phi, Z, \epsilon)$,
where $\W^{\ba,s}_{\Phi, \epsilon}(Z)$ jumps from $\infty$ to $0$, i.e.
\[
\W^{\ba,s}_{\Phi, \epsilon}(Z) = \left\{
\begin{array}{ll}
0, & s > P^\ba_{W} (T_1,\Phi, Z, \epsilon),\\
\infty,& s < P^\ba_{W} (T_1,\Phi,Z, \epsilon).
\end{array}
\right.
\]
Clearly  $P^\ba_{W} (T_1,\Phi, Z, \epsilon)$ does not decrease with $\epsilon$, and  hence the following limit exists, $$P^\ba_{W} (T_1,\Phi, Z) = \lim_{\epsilon\to 0}P^\ba_{W} (T_1,\Phi, Z, \epsilon).$$

\begin{de} We  call
$P^\ba_{W} (T_1,\Phi):= P^\ba_{W} (T_1,\Phi, X_1) $ the {\it average  $\ba$-weighted topological pressure of $\Phi$ with respect to
$T_1$} or, simply, the {\it  average $\ba$-weighted topological pressure of $\Phi$}, when there
is no confusion about $T_1$.
\end{de}

The main result of this subsection is the following.

\begin{pro}
\label{pro-2.1}
Let  $Z\subseteq X_1$.  Then for any $s\geq 0$ and  $\epsilon,\delta>0$, we have
\begin{equation*}
\label{e-ine}
\Lambda^{\ba, s+\delta}_{\Phi, N,6\epsilon}(Z)\leq \W^{\ba,s}_{\Phi, N,\epsilon}(Z)\leq \Lambda^{\ba, s}_{\Phi, N,\epsilon}(Z),
\end{equation*}
when $N$ is large enough. As a consequence, $P^\ba_{W} (T_1,\Phi)=P^\ba (T_1,\Phi)$.
\end{pro}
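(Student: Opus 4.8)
The plan is to establish the two inequalities
$\Lambda^{\ba, s+\delta}_{\Phi, N,6\epsilon}(Z)\leq \W^{\ba,s}_{\Phi, N,\epsilon}(Z)\leq \Lambda^{\ba, s}_{\Phi, N,\epsilon}(Z)$
for $N$ large, and then read off the equality of critical exponents by letting $N\to\infty$, then $\epsilon\to 0$.

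The right-hand inequality is the easy direction. Given any cover $\Gamma=\{(n_j,A_j)\}_j$ of $Z$ by sets $A_j\in\mathcal{T}^\ba_{n_j,\epsilon}$ with $n_j\geq N$, one simply takes the weighted family $\{(n_j,A_j,1)\}_j$, which satisfies $\sum_j\chi_{A_j}\geq\chi_Z$ since $\bigcup_jA_j\supseteq Z$. The value of this weighted family in \eqref{e-19} is exactly the value of $\Gamma$ in the definition of $\Lambda^{\ba,s}_{\Phi,N,\epsilon}(Z)$, so $\W^{\ba,s}_{\Phi,N,\epsilon}(Z)\leq\Lambda^{\ba,s}_{\Phi,N,\epsilon}(Z)$ with no constraint on $N$.

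The left-hand inequality is the substantive part, and it is the analogue of the classical comparison between Hausdorff measure and Hausdorff content (the ``weighted Hausdorff measure'' lemma, e.g. the Vitali-type argument in Federer/Mattila). Start from an arbitrary admissible weighted family $\Gamma=\{(n_j,A_j,c_j)\}_j$ with $\sum_jc_j\chi_{A_j}\geq\chi_Z$; we may assume the sum in \eqref{e-19} is finite. For each $j$ pick $x_j\in X_1$ with $A_j\subseteq B^\ba_{n_j}(x_j,\epsilon)$. For an integer $t\geq N$ and a real $\lambda\in(0,1)$, stratify: let $J_{t}=\{j:n_j=t\}$ and consider the level sets $Z_{t,\lambda}=\{x\in Z:\sum_{j\in J_{t}}c_j\chi_{A_j}(x)>\lambda\}$. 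Because $\sum_jc_j\chi_{A_j}\geq 1$ on $Z$, a standard splitting over $t$ and a dyadic decomposition of $\lambda$ shows $Z\subseteq\bigcup Z_{t,\lambda}$ over countably many pairs $(t,\lambda)$. On each $Z_{t,\lambda}$ one extracts, via the $5r$-covering lemma applied to the balls $\{B^\ba_t(x_j,\epsilon):j\in J_t,\,A_j\cap Z_{t,\lambda}\neq\emptyset\}$, a disjoint subfamily whose $6\epsilon$-dilates (here is where the jump from $\epsilon$ to $6\epsilon$ enters; note $B^\ba_t(x,\epsilon)\subseteq B^\ba_t(x_j,6\epsilon)$ whenever the two $\epsilon$-balls of radius meet, using the triangle inequality in each coordinate $d_i$ and the nesting of the index ranges $\lceil(a_1+\dots+a_i)t\rceil$) cover $Z_{t,\lambda}$. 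Disjointness plus the defining inequality $\sum c_j\chi_{A_j}>\lambda$ on $Z_{t,\lambda}$ bounds the number of selected balls, hence bounds $\Lambda^{\ba,s+\delta}_{\Phi,N,6\epsilon}(Z_{t,\lambda})$ by a constant multiple of $\lambda^{-1}\sum_{j\in J_t}c_j\exp(-sn_j+\frac1{a_1}\sup_{A_j}\log\phi_{\lceil a_1n_j\rceil})$; the extra factor $e^{-\delta t}$ coming from $s+\delta$ versus $s$ absorbs the geometric constant and the $\lambda^{-1}$ when $N$ (hence $t\geq N$) is large enough, and also makes the sum over all pairs $(t,\lambda)$ converge. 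Summing the resulting bounds over the countable stratification and using countable subadditivity of $\Lambda^{\ba,s+\delta}_{\Phi,N,6\epsilon}$ gives $\Lambda^{\ba,s+\delta}_{\Phi,N,6\epsilon}(Z)\leq C\cdot(\text{value of }\Gamma)$, and since $\Gamma$ and the constant can be arranged so that $C$ is absorbed, taking the infimum over $\Gamma$ yields the claim.

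The main obstacle I anticipate is the bookkeeping in the stratification-plus-covering step: one must organize the decomposition over the discrete parameter $n_j$ and over the weights $c_j$ simultaneously, verify that the $6\epsilon$ dilation is genuinely what the weighted Bowen balls require (the coordinatewise triangle inequality together with the monotonicity of the time windows $0\leq j\leq\lceil(a_1+\dots+a_i)n\rceil-1$), and check that choosing $N$ large makes the factor $e^{-\delta N}$ dominate all accumulated multiplicative constants, so that the final inequality holds with constant $1$ in the limit. Once the three displayed inequalities are in place, the consequence is immediate: comparing where $\W^{\ba,\cdot}_{\Phi,\epsilon}$ and $\Lambda^{\ba,\cdot}_{\Phi,\epsilon}$ jump gives $P^\ba_W(T_1,\Phi,Z,\epsilon)\leq P^\ba(T_1,\Phi,Z,6\epsilon)$ and $P^\ba_W(T_1,\Phi,Z,\epsilon)\geq P^\ba(T_1,\Phi,Z,\epsilon)$; letting $\epsilon\to 0$ and specializing $Z=X_1$ gives $P^\ba_W(T_1,\Phi)=P^\ba(T_1,\Phi)$.
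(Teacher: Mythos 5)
Your overall strategy matches the paper's: the right-hand inequality is obtained exactly as you say (take $c_j\equiv 1$), and the left-hand inequality is indeed a Federer-type weighted covering argument, stratified over the time parameter $n_j$ and over level sets of $\sum_j c_j\chi_{A_j}$, with the gain $e^{-\delta n}$ ($n\geq N$ large) absorbing all accumulated constants. But two steps in your sketch do not work as stated. First, the counting bound: a \emph{single} application of the $5r$-covering lemma yields a disjoint subfamily whose dilates cover $Z_{t,\lambda}$, but ``disjointness plus $\sum_jc_j\chi_{A_j}>\lambda$'' does not bound the cardinality of that subfamily by $\lambda^{-1}\sum_jc_j$ --- a dilated ball may contain many centers of pairwise disjoint $\epsilon$-balls in the (badly non-doubling, $n$-dependent) metrics $d^\ba_n$, and the selected balls may all carry tiny weights $c_j$. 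The correct mechanism is Federer's iterated selection, which the paper isolates as Lemma \ref{lem-2.0}: reduce to integer weights, run the covering lemma $m$ times while decrementing weights, and keep the round with the fewest balls; a compactness argument in the Hausdorff metric is then needed to pass from finite to countable families, and it is this limiting step (not the triangle inequality you invoke) that degrades the radius from $5\epsilon$ to $6\epsilon$. You cite Federer/Mattila, so you are pointing at the right tool, but the justification you actually write down is not it.

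Second, and more seriously, your treatment of the potential has a gap. You claim the covering of $Z_{t,\lambda}$ by $\lesssim\lambda^{-1}\sum_{j}c_j$ balls bounds $\Lambda^{\ba,s+\delta}_{\Phi,N,6\epsilon}(Z_{t,\lambda})$ by a constant times $\lambda^{-1}\sum_{j\in J_t}c_j\exp\bigl(-sn_j+\tfrac{1}{a_1}\sup_{A_j}\log\phi_{\lceil a_1n_j\rceil}\bigr)$. This does not follow: each new $6\epsilon$-ball contributes $\exp\bigl(\tfrac1{a_1}\sup\log\phi_{\lceil a_1 t\rceil}\bigr)$ taken over that ball (at best over $Z_{t,\lambda}$), so what the covering gives is $(\text{number of balls})\times\max_j g_t(A_j)$ with $g_t=\phi_{\lceil a_1t\rceil}^{1/a_1}$, and this can vastly exceed $\sum_jc_jg_t(A_j)$ when the maximum is attained at an index with small weight. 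The paper's proof handles this with a \emph{second} stratification that your proposal omits: $Z_{n,t}$ is cut into $n+1$ layers $Z_{n,t}^{\ell}$ on which $\tfrac1n\log g_n$ is constant to within $\gamma/n$, the weighted covering lemma is applied on each layer with centers in the sets $A_i\cap Z_{n,t}^{\ell}$ (so that $\max$ and individual values of $g_n$ differ by at most $e^{\gamma}$), and the bottom layer, where $g_n\leq e^{-n\gamma}\sup_{Z_{n,t}}g_n$, is disposed of by the crude estimate of Lemma \ref{lem-51} that $X_1$ is covered by $e^{n\gamma}$ balls of $d^\ba_n$-radius $6\epsilon$. Without this step your argument proves the proposition only in the entropy case $\phi_n\equiv1$. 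The concluding deduction of $P^\ba_W(T_1,\Phi)=P^\ba(T_1,\Phi)$ from the two inequalities is fine.
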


Before giving the proof of Proposition \ref{pro-2.1}, we first state some lemmas.
\begin{lem}
\label{lem-1.1}
 For any $s\geq 0$, $N\in \N$ and $\epsilon>0$, both $\Lambda^{\ba,s}_{\Phi, N,\epsilon}$ and $\W^{\ba, s}_{\Phi, N,\epsilon}$ are outer measures on $X$.
\end{lem}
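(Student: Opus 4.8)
The plan is to check, for each of the two set functions, the three defining properties of an outer measure on $X_1$: that it vanishes on $\emptyset$, that it is monotone, and that it is countably subadditive. Only countable subadditivity requires any work, and it will be established by the standard ``$\eta 2^{-i}$'' diagonal argument, using that a countable union of countable collections is again countable.

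First I would treat $\Lambda^{\ba,s}_{\Phi,N,\epsilon}$. That $\Lambda^{\ba,s}_{\Phi,N,\epsilon}(\emptyset)=0$ is immediate, since the empty collection $\Gamma=\emptyset$ is an admissible cover of $\emptyset$ and the associated (empty) sum is $0$. Monotonicity is equally immediate: if $Z_1\subseteq Z_2$, then every collection $\Gamma=\{(n_j,A_j)\}_j$ with $n_j\geq N$, $A_j\in\mathcal T^\ba_{n_j,\epsilon}$ and $\bigcup_jA_j\supseteq Z_2$ also satisfies $\bigcup_jA_j\supseteq Z_1$, so the infimum defining $\Lambda^{\ba,s}_{\Phi,N,\epsilon}(Z_1)$ ranges over a larger family and is no larger. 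For countable subadditivity, given $Z=\bigcup_{i\geq1}Z_i$ and $\eta>0$, I may assume $\sum_i\Lambda^{\ba,s}_{\Phi,N,\epsilon}(Z_i)<\infty$ (otherwise there is nothing to prove) and choose for each $i$ an admissible cover $\Gamma_i$ of $Z_i$ whose associated sum does not exceed $\Lambda^{\ba,s}_{\Phi,N,\epsilon}(Z_i)+\eta 2^{-i}$. Then $\Gamma:=\bigcup_i\Gamma_i$ is a countable collection, every member still belongs to some $\mathcal T^\ba_{n_j,\epsilon}$ with $n_j\geq N$, and the corresponding sets cover $\bigcup_iZ_i=Z$, so $\Gamma$ is admissible for $Z$; adding up the weights yields $\Lambda^{\ba,s}_{\Phi,N,\epsilon}(Z)\leq\sum_i\Lambda^{\ba,s}_{\Phi,N,\epsilon}(Z_i)+\eta$, and letting $\eta\to0$ gives the claim. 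The only property of the weights used here is that each pair $(n_j,A_j)$ carries a fixed value in $[0,\infty)$.

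Next I would handle $\W^{\ba,s}_{\Phi,N,\epsilon}$, which it is cleanest to treat first at the level of nonnegative functions on $X_1$ and only afterward specialize to characteristic functions. The value at $\chi_\emptyset\equiv0$ is $0$, since the empty collection satisfies $\sum_jc_j\chi_{A_j}=0\geq0$. If $0\leq g\leq f$, then any collection $\Gamma=\{(n_j,A_j,c_j)\}_j$ admissible for $f$, i.e.\ with $\sum_jc_j\chi_{A_j}\geq f$, is also admissible for $g$, so $\W^{\ba,s}_{\Phi,N,\epsilon}(g)\leq\W^{\ba,s}_{\Phi,N,\epsilon}(f)$; in particular $Z_1\subseteq Z_2$ gives $\W^{\ba,s}_{\Phi,N,\epsilon}(Z_1)\leq\W^{\ba,s}_{\Phi,N,\epsilon}(Z_2)$. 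For subadditivity I would first prove $\W^{\ba,s}_{\Phi,N,\epsilon}\bigl(\sum_{i\geq1}f_i\bigr)\leq\sum_{i\geq1}\W^{\ba,s}_{\Phi,N,\epsilon}(f_i)$ for nonnegative $f_i$: assuming the right-hand side finite, fix $\eta>0$, choose for each $i$ an admissible $\Gamma_i=\{(n_j^{(i)},A_j^{(i)},c_j^{(i)})\}_j$ with associated sum at most $\W^{\ba,s}_{\Phi,N,\epsilon}(f_i)+\eta 2^{-i}$, and set $\Gamma=\bigcup_i\Gamma_i$; since $\sum_{i,j}c_j^{(i)}\chi_{A_j^{(i)}}\geq\sum_if_i$, the collection $\Gamma$ (again countable) is admissible for $\sum_if_i$, and summing its weights and then letting $\eta\to0$ gives the bound. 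Countable subadditivity for sets now follows by taking $f_i=\chi_{Z_i}$ and combining $\chi_{\bigcup_iZ_i}\leq\sum_i\chi_{Z_i}$ with the monotonicity just established.

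I do not anticipate a genuine obstacle; the argument is a routine unwinding of the definitions. The only points deserving a little care are the bookkeeping conventions (the empty collection is admissible, the empty sum is $0$, and $\infty$-valued cases are disposed of trivially) and the passage, in the subadditivity step, through the elementary fact that a countable union of countable index sets is countable — exactly what makes forming $\Gamma=\bigcup_i\Gamma_i$ and summing over it legitimate.
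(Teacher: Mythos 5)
Your proof is correct and is exactly the routine verification the paper has in mind: the published proof of this lemma consists of the single sentence that it ``follows directly from the definitions,'' and your write-up supplies precisely the standard details (empty cover for $\emptyset$, monotonicity of the infimum, and the $\eta 2^{-i}$ diagonal argument for countable subadditivity, noting that each pair $(n_j,A_j)$ carries a fixed weight in $[0,\infty)$). No gap, and no divergence from the intended argument.
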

\begin{proof}
It follows directly from the definitions  $\Lambda^{\ba,s}_{\Phi, N,\epsilon}$ and $\W^{\ba, s}_{\Phi, N,\epsilon}$.
\end{proof}

The following combinatoric lemma plays an important role in the proof of Proposition \ref{pro-2.1}.

\begin{lem}
 \label{lem-2.0}
 Let $(X, d)$ be a compact metric space and  $\epsilon>0$. Let  $(E_i)_{i\in \I}$ be a finite or countable family of subsets of $X$ with diameter less than $\epsilon$, and $(c_i)_{i\in \I}$ a family  of positive numbers. Let $t>0$. Assume that $F\subseteq X$ such that
$$
F\subseteq \left\{x\in X:\; \sum_i c_i\chi_{E_i} >t\right \}.
$$
Then $F$ can be covered by no more than
$\frac{1}{t}\sum_ic_i$ balls with centers in $\bigcup_{i\in \I} E_i$ and radius $6\epsilon$.
\end{lem}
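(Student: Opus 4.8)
The plan is to mimic the classical covering argument behind the comparison of Hausdorff measure with its "weighted" (net) variant, as in Federer or Mattila, adapted to the present combinatorial situation. First I would reduce to the case where the index set $\I$ is finite: if $\I$ is countable, then for each $x\in F$ the inequality $\sum_i c_i\chi_{E_i}(x)>t$ is witnessed by a finite partial sum, so one can exhaust $F$ by the increasing union of the sets $F_n=\{x:\sum_{i\le n}c_ic_i\chi_{E_i}(x)>t\}$ (after enumerating $\I=\{1,2,\dots\}$), prove the bound $\lceil \frac1t\sum_{i\le n}c_i\rceil$ for each $F_n$ with a uniformly bounded number of balls, and pass to the limit. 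So assume $\I=\{1,\dots,m\}$.

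The core step is a greedy/maximal selection on the finite family. I would order the sets $E_1,\dots,E_m$ and run the following procedure: pick $E_{i_1}$ to be a set of maximal weight among those meeting $F$; then, having chosen $E_{i_1},\dots,E_{i_\ell}$, pick $E_{i_{\ell+1}}$ to be a set of maximal weight among those that meet $F$ but are disjoint from $E_{i_1}\cup\cdots\cup E_{i_\ell}$; stop when no such set remains. This produces a disjoint subfamily $E_{i_1},\dots,E_{i_p}$. For any $x\in F$, since $\sum_i c_i\chi_{E_i}(x)>t>0$, $x$ lies in some $E_i$ meeting $F$; such an $E_i$ must meet some selected $E_{i_\ell}$ (otherwise the greedy procedure would not have terminated), and by the maximality rule that $E_{i_\ell}$ was chosen before $E_i$ could be, so $c_{i_\ell}\ge c_i$. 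Hence, for a fixed ball $B(z_\ell,6\epsilon)$ centered at a point $z_\ell\in E_{i_\ell}$, every $E_i$ meeting both $F$ and $E_{i_\ell}$ has diameter $<\epsilon$ and therefore lies inside the ball of radius $3\epsilon$ about $z_\ell$; enlarging to $6\epsilon$ is comfortably enough, so $F\subseteq\bigcup_{\ell=1}^p B(z_\ell,6\epsilon)$.

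It remains to bound $p$. For each selected $E_{i_\ell}$, fix a point $y_\ell\in E_{i_\ell}\cap F$; then $\sum_i c_i\chi_{E_i}(y_\ell)>t$. Summing over $\ell$ and interchanging the order of summation gives
\[
pt<\sum_{\ell=1}^p\sum_i c_i\chi_{E_i}(y_\ell)=\sum_i c_i\,\#\{\ell:\ y_\ell\in E_i\}.
\]
Here the selected sets are pairwise disjoint, but the $E_i$ on the right need not be; the point is that each $y_\ell$ lies in $E_{i_\ell}$, and the $E_{i_\ell}$ are disjoint, so a given $E_i$ can contain at most the $y_\ell$ for which $E_{i_\ell}$ meets $E_i$ — a priori more than one. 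The clean way around this is to instead estimate using disjointness directly: since the $E_{i_\ell}$ are pairwise disjoint, the function $\sum_\ell c_{i_\ell}\chi_{E_{i_\ell}}\le\sum_i c_i\chi_{E_i}$ pointwise, but this gives nothing about $p$. So the correct bound comes from noting $pt\le \sum_{\ell=1}^p c_{i_\ell}\cdot(\text{something}\ge1)$; more precisely, evaluating $\sum_i c_i\chi_{E_i}$ at $y_\ell$ and using that $E_{i_\ell}\ni y_\ell$ contributes $c_{i_\ell}$, one gets $t<\sum_{i:\,y_\ell\in E_i}c_i$, and I would finish by the standard trick: the sets $\{i:E_i\cap E_{i_\ell}\ne\emptyset\}$ over $\ell$ can be arranged to have controlled overlap, or simply observe $pt\le\sum_i c_i$ follows because each $i$ with $E_i$ selected contributes $c_i$ to at most one of the disjoint $E_{i_\ell}$ and the greedy maximality forces $\sum_{\ell}c_{i_\ell}\le\sum_i c_i$ while $t<$ the value at $y_\ell$ forces $p\cdot t\le\sum_{\ell}(\text{value at }y_\ell)\le$ — this is precisely where care is needed.

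The main obstacle, then, is this last counting step: getting from "each selected $E_{i_\ell}$ carries a point where the total weight exceeds $t$" to the clean bound $p\le\frac1t\sum_i c_i$ without losing a multiplicative constant. I expect the right bookkeeping is to choose the $y_\ell$ and charge each $c_i$ to the (at most one) selected set containing $y_\ell$'s witness, using disjointness of the $E_{i_\ell}$ to ensure no double-charging, so that $pt\le\sum_{\ell}\big(\sum_{i:\,y_\ell\in E_i}c_i\big)$ collapses correctly; alternatively one replaces the greedy selection by a Vitali-type $5r$-covering argument on balls $B(x,\epsilon)$, $x\in E_i$, weighted by $c_i$, which makes the constant bookkeeping transparent at the cost of the larger radius — and $6\epsilon$ leaves room for exactly that. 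Everything else (the reduction to finite $\I$, the geometric inclusion into $6\epsilon$-balls, centering the balls in $\bigcup_i E_i$) is routine.
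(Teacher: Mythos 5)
Your geometric covering step (every $E_i$ meeting $F$ lies in a $6\epsilon$-ball around a selected center) is fine, but the heart of the lemma --- the bound $\frac1t\sum_i c_i$ on the \emph{number} of balls --- is exactly the step your proposal does not prove, and the greedy selection you set up cannot deliver it. After selecting a maximal disjoint subfamily $E_{i_1},\dots,E_{i_p}$ and points $y_\ell\in E_{i_\ell}\cap F$, the inequality $pt<\sum_i c_i\,\#\{\ell:\ y_\ell\in E_i\}$ is useless unless each multiplicity $\#\{\ell:\ y_\ell\in E_i\}$ is at most $1$, and there is no reason for that: a single unselected $E_i$ of diameter $<\epsilon$ can contain many of the points $y_\ell$ even though the selected sets are pairwise disjoint. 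You acknowledge this (``this is precisely where care is needed''), but neither fix you sketch works as stated: ``charging each $c_i$ to the at most one selected set'' presupposes the very multiplicity bound that is missing, and a single application of the $5r$-covering lemma controls only the geometry, not the weights $c_i$ against the threshold $t$.

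The paper closes this gap with Federer's weighted covering argument \cite[2.10.24]{Fed69}, which is a genuinely different mechanism. After replacing each $E_i$ by a ball $B_i=B(x_i,\epsilon)$ with $x_i\in E_i$ and restricting to a finite index set $\I_k$, one approximates the $c_i$ by positive integers, sets $m=\lceil t\rceil$, and applies the $5r$-covering lemma \emph{$m$ times in succession}: at step $j$ one extracts a disjoint subfamily $\B_j$ from the balls whose remaining multiplicity $v_{j-1}$ is still positive, the condition $\sum_{B\ni x}v_j(B)\ge m-j$ on $Z_k$ guarantees each $\B_j$ still covers $Z_k$ after dilation by $5$, and the total count satisfies $\sum_{j=1}^m\#(\B_j)\le\sum_i c_i$, so the pigeonhole principle yields one subfamily with $\#(\B_{j_0})\le\frac1m\sum_i c_i\le\frac1t\sum_i c_i$. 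It is this iteration-plus-pigeonhole device, absent from your proposal, that converts the pointwise inequality $\sum_i c_i\chi_{E_i}>t$ into a cardinality bound. Separately, your reduction to finite $\I$ (``pass to the limit'') is also where the radius must grow to $6\epsilon$: the paper extracts, via compactness of the space of finite center sets in the Hausdorff metric, a limit of the covers of the exhausting sets $Z_k$, and then returns to the actual centers $G_{k_j}$ (so that they lie in $\bigcup_i E_i$) at the cost of enlarging $5\epsilon$ to $6\epsilon$; this limiting step needs to be spelled out, though it is routine once the finite case is in hand.
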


To prove Lemma \ref{lem-2.0}, we need the following well known  covering lemma.
\begin{lem}
[cf. Theorem 2.1 in \cite{Mat95}]
\label{lem-2.1}

Let $(X, d)$ be a compact metric space and ${\mathcal
B}=\{B(x_i,r_i)\}_{i\in \mathcal I}$ be a family of  open
balls in $X$. Then there exists a finite or countable subfamily
${\mathcal B'}=\{B(x_i,r_i)\}_{i\in {\mathcal I}'}$ of pairwise
disjoint balls in ${\mathcal B}$ such that
$$\bigcup_{B\in {\mathcal B}} B\subseteq \bigcup_{i\in {\mathcal I}'}B(x_i,5r_i).$$
\end{lem}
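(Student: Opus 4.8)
The plan is to prove this by the classical greedy (Vitali-type) selection argument, grouping the balls into dyadic generations according to their radii.

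First I would clear away the degenerate cases. Balls of radius $0$ are empty and may be discarded; if $\I$ thereby becomes empty the statement is vacuous. If some $r_{i_0}>\mathrm{diam}(X)$, then $B(x_{i_0},r_{i_0})=X$, so $\bigcup_{B\in\B}B=X$ and $\B'=\{B(x_{i_0},r_{i_0})\}$ already works (a one-element family is trivially pairwise disjoint, and $B(x_{i_0},5r_{i_0})\supseteq X$). Hence I may assume $r_i>0$ for all $i$ and $r_i\le\mathrm{diam}(X)$, so that $R:=\sup_{i\in\I}r_i$ satisfies $0<R<\infty$. Next, partition the index set into generations by setting, for $n\ge1$,
$$
\I_n=\bigl\{i\in\I:\ R2^{-n}<r_i\le R2^{-n+1}\bigr\},\qquad \I=\bigcup_{n\ge1}\I_n .
$$
I would then choose $\I'_n\subseteq\I_n$ recursively: for $n\ge1$, having chosen $\I'_1,\dots,\I'_{n-1}$ (an empty list when $n=1$), let $\mathcal{K}_n$ be the set of $i\in\I_n$ for which $B(x_i,r_i)$ is disjoint from every $B(x_j,r_j)$ with $j\in\bigcup_{m<n}\I'_m$, and let $\I'_n$ be a maximal subfamily of $\mathcal{K}_n$, with respect to inclusion, such that $\{B(x_i,r_i):i\in\I'_n\}$ is pairwise disjoint. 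Such maximal families exist by Zorn's lemma, since the union of a chain of pairwise-disjoint subfamilies is again pairwise disjoint (any two members of the union lie in a common member of the chain); and because a compact metric space is separable, any collection of disjoint nonempty open sets is at most countable, so each $\I'_n$ — hence $\I':=\bigcup_{n\ge1}\I'_n$ — is at most countable. By construction the balls $\{B(x_i,r_i):i\in\I'\}$ are pairwise disjoint: within a single generation this is immediate, and across generations it holds because each $\I'_n\subseteq\mathcal{K}_n$ consists of balls disjoint from all earlier selected ones.

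It then remains to check the covering inclusion. Fix $i\in\I$, say $i\in\I_n$. By maximality of $\I'_n$, the ball $B(x_i,r_i)$ must meet $B(x_j,r_j)$ for some $j\in\bigcup_{m\le n}\I'_m$ — either because $i$ itself lies in $\I'_n$, or because $i\in\mathcal{K}_n$ and $\I'_n\cup\{i\}$ then fails to be pairwise disjoint, or because $i\notin\mathcal{K}_n$ and so $B(x_i,r_i)$ already meets some earlier selected ball. If $j\in\I_m$ with $m\le n$, then $r_j>R2^{-m}\ge R2^{-n}\ge r_i/2$, i.e. $r_i<2r_j$. Choosing $y\in B(x_i,r_i)\cap B(x_j,r_j)$, every $x\in B(x_i,r_i)$ satisfies
$$
d(x,x_j)\le d(x,x_i)+d(x_i,y)+d(y,x_j)<r_i+r_i+r_j<5r_j,
$$
so $x\in B(x_j,5r_j)$. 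Therefore $\bigcup_{B\in\B}B\subseteq\bigcup_{i\in\I'}B(x_i,5r_i)$, as required.

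The only genuinely delicate point is the set-theoretic one: justifying the existence of the maximal pairwise-disjoint subfamilies via Zorn's lemma and invoking separability of $X$ to guarantee that the resulting subfamilies are at most countable. The geometric content — the dyadic bucketing of the radii and the triangle-inequality estimate producing the factor $5$ — is entirely elementary.
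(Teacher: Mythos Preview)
Your argument is correct; it is the standard Vitali $5r$-covering proof via dyadic generations and greedy selection. Note, however, that the paper does not supply a proof of this lemma at all: it is stated with a citation to Mattila's book and used as a black box, so there is no ``paper's own proof'' to compare against. Your write-up supplies exactly what Mattila's Theorem~2.1 gives, with the minor extra care (compactness $\Rightarrow$ separability $\Rightarrow$ countability of disjoint open families, and Zorn for the maximal selection) needed to justify that $\I'$ is at most countable.
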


\begin{proof}[Proof of Lemma \ref{lem-2.0}] Without loss of generality, assume that $\I\subseteq \N$. For any $i\in \I$, pick $x_i\in E_i$ and write $B_i=B(x_i, \epsilon)$ and
$5B_i=B(x_i, 5\epsilon)$ for short. Clearly $E_i\subseteq B_i$. Define
$$
Z=\left\{x\in X:\; \sum_i c_i\chi_{B_i} >t\right \}.
$$
We have $F\subset Z$. To prove the lemma, it suffices to show that $Z$ can be covered by no more than
$\frac{1}{t}\sum_ic_i$ balls with centers in $\{x_i:i\in \I\}$ and radius $6\epsilon$. To avoid triviality, we assume that $\sum_ic_i<\infty$; otherwise there is nothing left to prove.

 For $k\in \N$, define  $$\I_{k}=\{i\in \I:\; i\leq k\} \quad\mbox{and}\quad
 Z_{k}=\Big\{x\in Z:\; \sum_{i\in \I_{k}}c_i\chi_{B_i}(x)>t\Big\}.
  $$
 We divide the remaining  proof  into two small steps.

  {\sl Step 1. For each  $k\in \N$, there exists a finite set $\J_{k}\subseteq \I_{k}$ such that
 the balls $B_i$ {\rm ($i\in \J_{k}$)} are pairwise disjoint, $Z_{k}\subseteq \bigcup_{i\in \J_{k}}5B_i$ and
 $$\#(\J_{k})\leq \frac{1}{t}\sum_{i\in \I_{k}}c_i.$$
 }
  To prove the above result, we adopt the argument from Federer \cite[2.10.24]{Fed69} in the study of weighted Hausdorff measures (see also Mattila \cite[Lemma 8.16]{Mat95}). Since  $\I_{k}$ is finite, by approximating the $c_i$'s from above,
 we may assume that each $c_i$ is a positive rational, and then multiplying $c_i$ and $t$ with a common denominator we may assume that each $c_i$ is
 a positive integer. Let $m$ be the least integer with $m\geq t$. Denote $\B=\{B_i,\; i\in \I_{k}\}$ and define $u: \B\to \N$ by $u(B_i)=c_i$.
 We define by induction integer-valued functions $v_0,v_1,\ldots, v_m$ on $\B$ and sub-families $\B_1,\ldots, \B_m$ of $\B$ starting with $v_0=u$.
 Using Lemma \ref{lem-2.1} we  find a pairwise disjoint subfamily $\B_1$ of $\B$ such that
 $\bigcup_{B\in \B}B\subseteq \bigcup_{B\in \B_1} 5B$, and hence $Z_{k}\subseteq \bigcup_{B\in \B_1} 5B$. Then by repeatedly using Lemma \ref{lem-2.1}, we can  define inductively for $j=1, \ldots, m$,  disjoint subfamilies $\B_j$ of $\B$ such that
 $$\B_j\subseteq \{B\in \B:\; v_{j-1}(B)\geq 1\},\quad Z_{k}\subseteq \bigcup_{B\in \B_j} 5B$$
 and the functions $v_j$ such that
 $$
 v_j(B)=\left\{\begin{array}{ll}
 v_{j-1}(B)-1 & \mbox { for } B\in \B_j,\\
  v_{j-1}(B) & \mbox { for } B\in \B\backslash \B_j.
\end{array}
\right.
$$
This is possible since for $j<m$,
$Z_{k}\subseteq \big\{x: \sum_{B\in \B:\; B\ni x}v_j(B)\geq m-j\big\}$,
whence every $x\in Z_{k}$ belongs to some ball $B\in \B$ with $v_j(B)\geq 1$. Thus
\begin{eqnarray*}
 \sum_{j=1}^m \#(\B_j) &=&\sum_{j=1}^m \sum_{B\in \B_j} (v_{j-1}(B)-v_j(B))=\sum_{B\in \B_j}\sum_{j=1}^m  (v_{j-1}(B)-v_j(B))  \\
&\leq & \sum_{B\in \B}\sum_{j=1}^m (v_{j-1}(B)-v_j(B))\leq \sum_{B\in \B} u(B)=\sum_{i\in \I_{k}} c_i.
\end{eqnarray*}
Choose $j_0\in \{1,\ldots, m\}$ so that $\#(\B_{j_0})$ is the smallest. Then $$\#(\B_{j_0}) \leq \frac{1}{m}\sum_{i\in \I_{k}} c_i
\leq \frac{1}{t}\sum_{i\in \I_{k}} c_i.$$ Hence  $\J_{k}:=\{i\in \I_k:\; B_i\in \B_{j_0}\}$ is desired.

{\sl Step 2. There exists $\I'\subset \I$ with $\#(\I')\leq \frac{1}{t}\sum_{i\in \I}c_i$ so that $Z\subseteq \bigcup_{i\in \I'} 6B_i$.
 }

 Since $Z_{k}\uparrow Z$, $Z_{k}\neq \emptyset$ when $k$ is large enough. Let $\J_{k}$ be  constructed as in Step 1. Then $\J_{k}\neq \emptyset$ when $k$ is large enough.
 Define $G_{k}=\{x_i:\; i\in \J_{k}\}$.
 Then
 $$
\#(G_k)=\# (\J_k)\leq  \frac{1}{t}\sum_{i\in \I_{k}}c_i\leq \frac{1}{t}\sum_{i\in \I} c_i.
 $$

 Since the space of non-empty compact subsets of $X$ is compact with respect to the Hausdorff distance (cf. Federer \cite[2.10.21]{Fed69}),  there is a subsequence $(k_j)$ of natural numbers and a non-empty compact set $G\subset X$ such that $G_{k_j}$ converges to $G$ in  the Hausdorff distance as $j\to \infty$. As any two different points in $G_{k}$ have a distance  not less than $\epsilon$, so do the points in $G$. Thus $G$ is a finite set, moreover, $\#(G_{k_j})=\#(G)$ when $j$ is large enough.  Hence
 $$
 \bigcup_{x\in G} B(x,5.5\epsilon) \supseteq  \bigcup_{x\in G_{k_j}} B(x,5\epsilon)=\bigcup_{i \in \J_{k_j}} 5 B_i\supseteq Z_{k_j}
 $$
 when $j$ is large enough, and thus $\bigcup_{x\in G} B(x,5.5\epsilon)\supseteq Z$. On the other hand, when $j$ is large enough, we have
 $$
  \bigcup_{x'\in G_{k_j}} B(x',6\epsilon) \supseteq  \bigcup_{x\in G} B(x,5.5\epsilon),
 $$
 hence we have $\bigcup_{x'\in G_{k_j}} B(x',6\epsilon)\supseteq Z$, with $\#(G_{k_j})\leq \frac{1}{t}\sum_{i\in \I}c_i.$
\end{proof}

\bigskip
Return back to the metric spaces $(X_i, d_i)$ and TDS's $(X_i, T_i)$, $i=1,\ldots, k$. For $n\in \N$, define
a metric $d^\ba_n$ on $X_1$ by
$$
d^\ba_n(x,y)=\sup\left\{ d_i(T_i^j\tau_{i-1} x, T_i^j \tau_{i-1} y):  \; 1\leq i\leq k, \;0\leq j\leq \lceil(a_1+\ldots +a_i)n\rceil-1\right\}.
$$

\begin{lem}
\label{lem-51}
  Let $\epsilon>0$. Then there exist $\gamma>0$  such that for any $n\in  \N$, $X_1$ can be covered by no more than $\exp(n\gamma)$ balls of radius $\epsilon$ in  metric $d^\ba_n$.
\end{lem}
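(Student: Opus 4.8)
The plan is to reduce the covering estimate for the weighted Bowen metric $d^\ba_n$ on $X_1$ to an ordinary Bowen‑metric covering estimate for the single system $(X_1,T_1)$, by exploiting the factor structure. The starting point is that, since $\pi_i\circ T_i=T_{i+1}\circ\pi_i$ for each $i$, one has $\tau_{i-1}\circ T_1^j=T_i^j\circ\tau_{i-1}$ for all $i$ and $j$; hence the defining inequalities of $B^\ba_n(x,\epsilon)$ may be rewritten purely in terms of $T_1$‑orbits on $X_1$, namely $d_i(\tau_{i-1}T_1^jx,\tau_{i-1}T_1^jy)<\epsilon$. Each $\tau_{i-1}:X_1\to X_i$ is continuous on the compact space $X_1$, hence uniformly continuous, so there is $\delta\in(0,\epsilon]$ with
$$
d_1(u,v)<\delta\ \Longrightarrow\ d_i(\tau_{i-1}u,\tau_{i-1}v)<\epsilon\quad\text{for all }i=1,\dots,k.
$$
Put $m=m(n):=\lceil(a_1+\dots+a_k)n\rceil$. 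Since $\lceil(a_1+\dots+a_i)n\rceil\le m$ for every $i$ (because $a_i\ge 0$), it follows that whenever $d_1(T_1^jx,T_1^jy)<\delta$ for all $0\le j\le m-1$, then $d^\ba_n(x,y)<\epsilon$. In other words, the $d^\ba_n$‑ball of radius $\epsilon$ about $x$ contains the ordinary $m$‑step Bowen ball of radius $\delta$ about $x$ for $(X_1,T_1,d_1)$.

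It thus suffices to cover $X_1$ by at most $\exp(n\gamma)$ ordinary Bowen balls of radius $\delta$ and depth $m$. For this I would use the elementary fact that $x\mapsto(x,T_1x,\dots,T_1^{m-1}x)$ is an isometric embedding of $X_1$, equipped with the $m$‑step Bowen metric, into $(X_1^m,d_{\sup})$. Let $N$ be the minimal number of $d_1$‑balls of radius $\delta/2$ needed to cover $X_1$ (finite, by compactness). The $N^m$ corresponding product balls cover $X_1^m$, hence cover the image of the embedding; choosing, for each such product ball meeting the image, one point of $X_1$ in its preimage and enlarging the radius to $\delta$, one obtains a cover of $X_1$ by at most $N^m$ Bowen balls of radius $\delta$ and depth $m$. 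Combining with the previous paragraph and using $m=m(n)\le(a_1+\dots+a_k)n+1$, we cover $X_1$ by at most $N^{m(n)}\le\exp\!\big(n((a_1+\dots+a_k)+1)\log N\big)$ balls of radius $\epsilon$ in $d^\ba_n$, so $\gamma:=((a_1+\dots+a_k)+1)\log N+1>0$ works (the $+1$ merely covers the degenerate case $N=1$).

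I do not expect a genuine obstacle. The two points needing a little care are: (i) the monotonicity $\lceil(a_1+\dots+a_i)n\rceil\le\lceil(a_1+\dots+a_k)n\rceil$ together with the commutation relation $T_i^j\tau_{i-1}=\tau_{i-1}T_1^j$, which jointly let a single depth‑$m$ Bowen condition on $(X_1,T_1)$ control all $k$ families of inequalities simultaneously; and (ii) the bookkeeping with the factors of $2$ and the choice of ball centers when turning a cover of $X_1^m$ by $\delta/2$‑balls into a cover of $X_1$ by $d^\ba_n$‑balls of radius $\epsilon$ with centers in $X_1$. The product‑embedding step is the only mildly technical ingredient, but it is entirely standard, and in particular the argument needs no finiteness assumption on topological entropy since the scale $\epsilon$ (hence $\delta$, hence $N$) is fixed.
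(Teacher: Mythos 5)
Your proof is correct. The containment of the ordinary depth-$m$ Bowen ball of radius $\delta$ (with $m=\lceil(a_1+\cdots+a_k)n\rceil$) inside $B^\ba_n(x,\epsilon)$ follows exactly as you say from the commutation $T_i^j\tau_{i-1}=\tau_{i-1}T_1^j$, the monotonicity of the windows, and uniform continuity of the $\tau_{i-1}$; and the product-embedding count $N^m$ with the ensuing arithmetic is fine. Your route differs from the paper's in one respect: the paper does not reduce to the ordinary Bowen metric on $X_1$ at all. Instead it takes, for each $i$, a finite open cover $\alpha_i$ of $X_i$ of diameter less than $\epsilon$ and forms the join $\beta=\bigvee_{i=1}^k\bigvee_{j=0}^{\lceil(a_1+\cdots+a_i)n\rceil-1}T_1^{-j}\tau_{i-1}^{-1}\alpha_i$, which is directly a cover of $X_1$ of $d^\ba_n$-diameter less than $\epsilon$; the count $\#(\beta)\le\prod_i(\#\alpha_i)^{\lceil(a_1+\cdots+a_i)n\rceil}$ then gives $\gamma$ immediately. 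The paper's version thus avoids the uniform-continuity step and only charges each factor over its own time window, yielding a $\gamma$ adapted to the weighted structure; yours is coarser (you pay the finest scale $\delta$ over the full window $[0,m)$) but needs only a single cover of $X_1$. Since the lemma only asks for some $\gamma>0$, both are equally adequate here.
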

\begin{proof}
By compactness, for each $1\leq i\leq k$, we can find a finite open cover $\alpha_i$ of $X_i$ with $\mbox{diam}(\alpha_i)<\epsilon$ (in metric $d_1$). Let $n>0$.
Define
$$
\beta=\bigvee_{i=1}^k \left(
\bigvee_{j=0}^{\lceil(a_1+\cdots+a_i)n\rceil-1}T_1^{-j}\tau_{i-1}^{-1}\alpha_i\right).
$$
Then $\beta$ is an open cover of $X_1$  with diameter less than $\epsilon$ (with respect to the metric $d^\ba_n$).  Hence $X_1$ can be covered by at most $\#(\beta)$ many balls of radius $\epsilon$ in metric $d^\ba_n$.
Let $\gamma>0$ so that $\exp(\gamma)=\prod_{i=1}^k(\#(\alpha_i))^{a_1+\cdots+a_i+1}$. Then
$$
\#(\beta)\leq \prod_{i=1}^k(\#(\alpha_i))^{\lceil(a_1+\cdots+a_i)n\rceil}\leq \exp(n\gamma),
$$
which implies the result of the lemma.
\end{proof}

\begin{proof}[Proof of Proposition \ref{pro-2.1}] Let $Z\subseteq X_1$, $s\geq 0$, $\epsilon, \delta>0$. Taking  $f=\chi_Z$ and $c_i\equiv 1$ in the definition \eqref{e-19}, we see that
 $\W^{\ba,s}_{\Phi, N,\epsilon}(Z)\leq \Lambda^{\ba, s}_{\Phi, N,\epsilon}(Z)$ for each $N\in \N$. In the following, we prove that $\Lambda^{\ba, s+\delta}_{\Phi, N,6\epsilon}(Z)\leq \W^{\ba, s}_{\Phi, N,\epsilon}(Z)$
 when $N$ is large enough.

 Let $\gamma>0$ be given as in Lemma \ref{lem-51}.
 Assume that $N\geq 2$ such that
 \begin{equation}
 \label{e-gamma}
 n^2(n+1)e^{\gamma-n\delta}\le 1,\quad \forall\; n\geq N.
 \end{equation}
 Let $\{(n_i, A_i, c_i)\}_{i\in \mathcal I}$ be a  family so that $\I\subseteq \N$, $A_i\in {\mathcal T}^\ba_{n_i,\epsilon} $,  $0<c_i<\infty$,
  $n_i\geq N$ and
 \begin{equation} \label{e-gez}
  \sum_{i\in \I}c_i\chi_{A_i}\geq \chi_Z.
 \end{equation}
 We show below that
 \begin{equation}
 \label{e-key}
 \Lambda^{\ba, s+\delta}_{\Phi, N,6\epsilon}(Z)\leq \sum_{i\in \I}c_i\exp\left(-n_is+\frac{1}{a_1}\sup_{x\in A_j}\log \phi_{\lceil a_1n_j\rceil}(x)\right) ,
 \end{equation}
 which implies  $\Lambda^{\ba, s+\delta}_{\Phi, N,6\epsilon}(Z)\leq \W^{\ba,s}_{\Phi, N,\epsilon}(Z)$.

To prove \eqref{e-key}, we write   $\I_n=\{i\in \I:\; n_i=n\}$,
$$
g_n(x)=(\phi_{\lceil a_1n\rceil}(x))^{1/a_1},\quad g_n(E)=\sup_{x\in E} g_n(x)
$$
for $n\in \N$,  $x\in X_1$ and  $E\subseteq X_1$. Moreover set
 \begin{equation*}
 \begin{split}
  Z_{n,t}&=\Big\{x\in Z:\; \sum_{i\in \I_{n}}c_i\chi_{A_i}(x)>t\Big\}.
    \end{split}
 \end{equation*}
We claim that
 \begin{equation}
 \label{e-2.10}
\Lambda^{\ba,s+\delta}_{\Phi, N,\epsilon}(Z_{n,t})\leq \frac{1}{t n^2}\sum_{i\in \I_n}c_i \exp(-ns) g_n(A_i),\quad  \forall \; n\geq N, \;0<t<1.
 \end{equation}

 To prove the claim, assume that  $n\geq N$ and $0<t<1$. Set  $D=\frac{1}{n}\log g_n(Z_{n,t})$.
For $\ell=1,\ldots, n$ and  $i\in \I_n$,  write
$$
Z_{n,t}^\ell=\left\{x\in Z_{n,t}: \frac{1}{n}\log g_n(x)\in \Big(D-\frac{\gamma \ell}{n}, D-\frac{\gamma (\ell-1)}{n}\Big]\right\},\quad A_{i,\ell}:=A_i\cap Z_{n,t}^\ell,
$$
and
$$Z_{n,t}^0=\left\{x\in Z_{n,t}: \frac{1}{n}\log g_n(x)\leq  D-\gamma\right\},\quad A_{i,0}=A_i\cap Z_{n,t}^0.
$$
For $\ell=0, 1,\ldots, n$, write $\I_{n,\ell}=\{i\in \I_n:\; A_{i,\ell}\neq \emptyset\}$; then
 $$Z_{n, t}^{\ell}= \Big\{x\in X_1:\; \sum_{i\in \I_{n,\ell}}c_i\chi_{A_{i,\ell}}(x)>t\Big\}.$$
Hence by Lemma \ref{lem-2.0}, $Z_{n, t}^{\ell}$ can be covered by no more than $\frac{1}{t}\sum_{i\in \I_{n,\ell}} c_i$ balls with center in
 $\bigcup_{i\in \I_n}A_{i,\ell}$ and  radius $6\epsilon$ (in   metric $d_n^\ba$). It follows that for $\ell=1,\ldots, n$,
\begin{equation}
\label{e-im}
\begin{split}
\Lambda^{\ba,s+\delta}_{\Phi, N,6\epsilon}(Z_{n, t}^{\ell})&\leq e^{-n(s+\delta)} (\frac{1}{t}\sum_{i\in \I_{n,\ell}} c_i)   g_n(Z_{n,t}^\ell)\leq e^{-n(s+\delta)} e^{\gamma} \frac{1}{t}\sum_{i\in \I_{n,\ell}} c_i   g_n(A_{i,\ell})\\
&\leq e^{\gamma-n\delta}  \frac{1}{t}\sum_{i\in \I_{n}} c_i e^{-ns}   g_n(A_{i}).\\
\end{split}
\end{equation}
We still need to estimate $\Lambda^{\ba,s+\delta}_{\Phi, N,6\epsilon}(Z_{n, t}^{0})$. By Lemma \ref{lem-51},  $X_1$ (and thus $Z_{n, t}^{0}$) can be covered by no more than
$\exp(n\gamma)$ balls  of radius $6\epsilon$ (in  metric $d_n^\ba$).  Hence
\begin{equation}
\label{e-im1}
\begin{split}
\Lambda^{\ba,s+\delta}_{\Phi, N,6\epsilon}(Z_{n, t}^{0})&\leq \exp(n\gamma) e^{-n(s+\delta)}g_n(Z_{n, t}^{0})\leq  \exp(n\gamma) e^{-n(s+\delta)} \exp (n (D-\gamma))\\
&\leq e^{-n(s+\delta)} \exp (nD)\leq  e^{-n\delta} \frac{1}{t}\sum_{i\in \I_{n}} c_i e^{-ns}  g_n(A_{i}),
\end{split}
\end{equation}
where the last inequality uses the following arguments: since $\exp (nD)=g_n(Z_{n,t})$, for any $u<\exp (nD)$, there exists $x\in Z_{n,t}$ so that
$g_n(x)\geq u$; however since $x\in Z_{n,t}$ we have $\sum_{i\in \I_n: \; A_i\ni x}c_i\geq t$, which implies
$$
\frac{1}{t}\sum_{i\in \I_{n}} c_i   g_n(A_{i})\geq \frac{1}{t}\sum_{i\in \I_{n}: A_i\ni x } c_i   g_n(A_{i})\geq \frac{1}{t}\sum_{i\in \I_{n}: A_i\ni x } c_i  u\geq u.
$$

 Combining \eqref{e-im}-\eqref{e-im1}, we have
 \begin{equation}
 \label{e-im2}
\begin{split}
 \Lambda^{\ba,s+\delta}_{\Phi, N,6\epsilon}(Z_{n, t})\leq \sum_{\ell=0}^n \Lambda^{\ba,s+\delta}_{\Phi, N,6\epsilon}(Z_{n, t}^{\ell})&\leq
 (n+1)e^{\gamma-n\delta} \frac{1}{t}\sum_{i\in \I_{n}} c_i e^{-ns}   g_n(A_{i})\\
 &\leq \frac{1}{n^2t}\sum_{i\in \I_{n}} c_i e^{-ns}   g_n(A_{i}),
 \end{split}
\end{equation}
where in the last inequality we use \eqref{e-gamma}. This finishes the proof of \eqref{e-2.10}.

 To complete the proof of Proposition  \ref{pro-2.1},  notice that $\sum_{n=N}^\infty n^{-2}\leq \sum_{n=2}^\infty n^{-2}\leq 1$; hence if $x\not\in
 \bigcup_{n\geq N} Z_{n, n^{-2}t}$, then
 $$
 \sum_{i\in \I}c_i\chi_{A_i}(x)=\sum_{i\in \bigcup_{n=N}^\infty\I_n}c_i\chi_{A_i}(x)\leq \sum_{n=N}^\infty\sum_{i\in \I_n}c_i\chi_{A_i}(x)\leq \sum_{n=N}^\infty n^{-2}t\leq t<1,
 $$
thus  $x\not\in Z$ by \eqref{e-gez}. Therefore $Z\subseteq \bigcup_{n\geq N} Z_{n, n^{-2}t}$. By \eqref{e-im2},
 $$
 \Lambda^{\ba,s+\delta}_{\Phi, N,6\epsilon}(Z)\leq \sum_{n=N}^\infty \Lambda^{\ba,s+\delta}_{\Phi, N,6\epsilon}(Z_{n, n^{-2}t})\leq
 \frac{1}{t}\sum_{n=N}^\infty\sum_{i\in \I_{n}} c_i e^{-ns}   g_n(A_{i})\leq \frac{1}{t}\sum_{i\in \I} c_i e^{-n_is}   g_{n_i}(A_{i}).
  $$
 Letting $t\uparrow 1$, we have
 $$
\Lambda^{\ba,s+\delta}_{\Phi, N,6\epsilon}(Z)\leq \sum_{i\in \I} c_i e^{-n_is}   g_{n_i}(A_{i}),
 $$
 that is, \eqref{e-key} holds.
 This finishes the proof of  Proposition  \ref{pro-2.1}.
 \end{proof}

\subsection{Proof of Lemma \ref{lem-Frost}}
\label{S-3}

It is easy to see that Lemma \ref{lem-Frost} follows directly from Proposition  \ref{pro-2.1} and the following lemma.
\begin{lem} \label{pro-3.1}  Let $s\geq 0$, $N\in \N$ and $\epsilon>0$. Suppose that
 $c:=\W^{\ba,s}_{\Phi, N,\epsilon}(X_1)>0$. Then there is a Borel probability measure $\mu$ on $X_1$ such that  for any $n\geq N$, $x\in X_1$,  and any compact
 $K\subset B^\ba_n(x,\epsilon)$,
   $$
\mu(K)\leq \frac{1}{c}e^{-ns} g_n(K),
$$
where $$
g_n(z)=(\phi_{\lceil a_1n\rceil}(z))^{1/a_1},\quad g_n(K)=\sup_{z\in K} g_n(z).
$$
\end{lem}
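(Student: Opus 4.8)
The plan is to construct the measure $\mu$ as a weak-$*$ limit of a net (or sequence) of finitely-supported measures, following the classical Frostman-type argument used in geometric measure theory, adapted here to the dynamical setting via $\ba$-weighted Bowen balls. Concretely, I would first observe that $\W^{\ba,s}_{\Phi,N,\epsilon}$ is an outer measure on $X_1$ (Lemma~\ref{lem-1.1}), and that the hypothesis $c=\W^{\ba,s}_{\Phi,N,\epsilon}(X_1)>0$ means one cannot cover $X_1$ cheaply. The idea is to define, for a compact set $K\subseteq B_n^\ba(x,\epsilon)$ with $n\ge N$, the weight $w(n,K)=e^{-ns}g_n(K)$, so that the required conclusion reads $\mu(K)\le c^{-1}w(n,K)$; this is exactly the statement that $\mu$ is dominated by $c^{-1}$ times the ``set function'' generating $\W$.

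The main step is a Hahn--Banach / min-max argument. Following the approach of \cite{FeHu12} referenced right before the lemma, I would set up the following: for each large $m$, let $\mathcal{Q}_m$ be the finite collection of admissible pairs $(n,A)$ with $N\le n\le m$ and $A$ ranging over a suitable finite family of Borel subsets of $\ba$-weighted Bowen balls (using a finite cover of $X_1$ in the metric $d_n^\ba$, which exists with controlled cardinality by Lemma~\ref{lem-51}). Consider the convex set of probability measures $\theta$ on $X_1$ satisfying $\theta(A)\le \lambda\, w(n,A)$ for all $(n,A)\in\mathcal{Q}_m$, where $\lambda$ is a parameter. The key claim is that such a $\theta$ exists with $\lambda=1/\W^{\ba,s}_{\Phi,N,\epsilon}(X_1)$ (or with a constant arbitrarily close to it); this follows because if no such $\theta$ existed, a separation/duality argument would produce coefficients $c_j>0$ with $\sum_j c_j\chi_{A_j}\ge 1=\chi_{X_1}$ and $\sum_j c_j w(n_j,A_j)<c$, contradicting the definition of $\W^{\ba,s}_{\Phi,N,\epsilon}(X_1)$ as the infimum of exactly such sums. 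One then takes a weak-$*$ limit point $\mu$ of the $\theta_m$ as $m\to\infty$; the desired inequality $\mu(K)\le c^{-1}e^{-ns}g_n(K)$ for compact $K\subseteq B_n^\ba(x,\epsilon)$ is obtained by approximating $K$ from outside by the finite-level sets (using upper semicontinuity of $\phi_{\lceil a_1 n\rceil}$, which guarantees $g_n$ behaves well under such approximation, and using that $\mu(K)\le\liminf\theta_m(U)$ for open $U\supseteq K$).

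I expect the principal obstacle to be making the duality argument rigorous while keeping the weights attached to the \emph{correct} $\ba$-weighted Bowen balls: the quantity $g_n(K)=\sup_{z\in K}(\phi_{\lceil a_1n\rceil}(z))^{1/a_1}$ is a supremum over the set, so it is monotone in $K$ but not additive, and one must be careful that shrinking $K$ to a finite-level approximant does not lose control of this supremum --- this is where upper semicontinuity of each $\phi_n$ (part of the definition of sub-additive potential, \eqref{e-1.0}) is essential, since it makes $z\mapsto g_n(z)$ upper semicontinuous and hence $g_n(K)=\lim g_n(K_\delta)$ for a decreasing sequence of closed neighborhoods $K_\delta\downarrow K$. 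A secondary technical point is ensuring the finite families of sets used at level $m$ are rich enough that every compact $K\subseteq B_n^\ba(x,\epsilon)$ can be captured in the limit; here I would use that $\mathcal{T}^\ba_{n,\epsilon}$ consists of arbitrary Borel subsets of Bowen balls, together with inner/outer regularity of Borel measures on the compact metric space $X_1$, to pass from the finite-combinatorial estimate to the full statement. Once Lemma~\ref{pro-3.1} is in hand, Lemma~\ref{lem-Frost} follows immediately: given $0<s<P^\ba(T_1,\Phi)=P^\ba_W(T_1,\Phi)$ by Proposition~\ref{pro-2.1}, pick $s<s'<P^\ba_W(T_1,\Phi)$ and $\epsilon>0$ small so that $\W^{\ba,s'}_{\Phi,N,\epsilon}(X_1)=\infty$ for all $N$, in particular positive, apply Lemma~\ref{pro-3.1} with $s'$ in place of $s$, and absorb the constant $1/c$ into the exponential by enlarging $N$ so that $e^{-(s'-s)n}\le c$ for $n\ge N$.
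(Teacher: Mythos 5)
Your proposal is correct in substance and rests on the same underlying duality as the paper's argument, but it takes a genuinely different route. The paper follows Howroyd: since $f\mapsto p(f):=\frac{1}{c}\W^{\ba,s}_{\Phi,N,\epsilon}(f)$ is already defined on all of $C(X_1)$ and is sublinear with $p(\mathbf{1})=1$ and $p(g)=0$ for $g\le 0$, one applies Hahn--Banach directly to extend $t\mapsto tp(\mathbf{1})$ from the constants to a positive linear functional $L\le p$, obtains $\mu$ by Riesz representation, and then for compact $K\subset B^\ba_n(x,\epsilon)$ sandwiches a Urysohn function $\chi_K\le f\le \chi_V$ with $V$ open, $K\subset V\subset B^\ba_n(x,\epsilon)$ and $g_n(V)\le g_n(K)+\delta$ (exactly the upper semicontinuity of $g_n$ that you identify), giving $\mu(K)\le L(f)\le p(f)\le \frac{1}{c}e^{-ns}(g_n(K)+\delta)$. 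Your route replaces this single infinite-dimensional Hahn--Banach step by a finite-level LP/min--max duality followed by a weak-$*$ limit; the duality claim you state (nonexistence of a dominated probability measure forces a cheap weighted cover, contradicting $\W^{\ba,s}_{\Phi,N,\epsilon}(X_1)=c$) is the finite-dimensional shadow of the same fact. What the paper's approach buys is precisely the avoidance of the "richness" problem you flag: because the functional $p$ is evaluated on arbitrary continuous $f$, there is no need to design finite families $\mathcal{Q}_m$ that outer-approximate every compact subset of every $\ba$-weighted Bowen ball, nor to track which Portmanteau inequality survives the weak-$*$ limit; your version can be made rigorous but requires a careful (and somewhat laborious) choice of the level-$m$ families together with outer regularity. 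Your final paragraph deducing Lemma \ref{lem-Frost} from this lemma via Proposition \ref{pro-2.1}, absorbing $1/c$ by enlarging $N$, matches the paper's reduction.
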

\begin{proof} Here we adopt the idea employed by  Howroyd  in his proof of the Frostman lemma in compact metric spaces (cf. \cite[Theorem 2]{How95}). Clearly $c<\infty$. We define a function $p$ on the space $C(X_1)$ of continuous real-valued functions on $X_1$ by
\begin{equation*}
\label{zx-eq}
p(f)=(1/c)\W^{\ba,s}_{\Phi, N,\epsilon}(f).
\end{equation*}

Let ${\bf 1}\in C(X_1)$ denote the constant function ${\bf 1}(x)\equiv 1$. It is easy to verify that

\begin{enumerate}

\item $p(f+g)\le p(f)+p(g)$ for any $f,g\in C(X_1)$.

\item $p(tf)=tp(f)$ for any $t\ge 0$ and $f\in C(X_1)$.

\item $p({\bf 1})=1$, $0\leq p(f)\leq \|f\|_\infty$ for any $f\in C(X_1)$,  and
$p(g)=0$ for $g\in C(X_1)$ with $g\le 0$.
\end{enumerate}
By
the Hahn-Banach theorem, we can extend  the linear functional $t\mapsto t p({\bf 1})$, $t\in \R$, from the subspace of the constant functions to a linear functional $L:\;
C(X_1)\to \R$ satisfying
$$L({\bf 1})=p({\bf 1})=1 \text{ and }-p(-f)\le L(f)\le p(f)    \text{ for any }f\in C(X_1).$$
If $f\in C(X_1)$ with $f\ge 0$, then $p(-f)=0$ and so $L(f)\ge 0$.
Hence combining the fact $L({\bf 1})=1$, we can use the Riesz
representation theorem to find a Borel probability measure $\mu$ on
$X_1$ such that $L(f)=\int f d\mu$ for $f\in C(X_1)$.

Now let $x\in X_1$ and $n\geq N$. Suppose that  $K$ is a compact subset of $B^\ba_n(x,\epsilon)$.  Let $\delta>0$. Since $g_n$ is upper semi-continuous, there exists an open set $B^\ba_n(x,\epsilon)\supset V\supset K$ such that
$g_n(V)\leq g_n(K)+\delta$.

By the Uryson lemma,  there exists $f\in C(X_1)$ such that $0
\le f\le 1$, $f(y)=1$ for $y\in K$, and $f(y)=0$ for $y\in X_1\backslash V$.
Then $\mu(K)\le L(f)\le p(f)$. Since $f\leq \chi_{V}$ and $n\geq N$, we have $\W^{\ba, s}_{\Phi, N,\epsilon}( f)\leq e^{-ns}g_n(V)$ and thus
$p(f)\le \frac{1}{c} e^{-sn}g_n(V)$. Therefore $$\mu(K)\le \frac{1}{c}e^{-ns}g_n(V)\leq \frac{1}{c}e^{-ns} (g_n(K)+\delta).$$ Letting $\delta\to 0$, we have
$\mu(K)\le \frac{1}{c}e^{-ns}g_n(K).$ This completes the proof of the lemma.
\end{proof}

\section{The proof of Theorem \ref{thm-1.1}: Lower bound}
\label{s-4}

In this section, we prove the lower bound part of Theorem \ref{thm-1.1}. The following weighted version of Brin-Katok theorem plays a key role in our proof.
\begin{thm} \label{thm-4.1} For each ergodic measure $\mu\in \mathcal{M}(X_1,T_1)$, we have
$$\lim_{\epsilon \rightarrow 0} \liminf_{n\rightarrow +\infty} \frac{-\log \mu(B_n^{\bf a}(x,\epsilon))}{n}=
\lim_{\epsilon \rightarrow 0} \limsup_{n\rightarrow +\infty}
\frac{-\log \mu(B_n^{\bf a}(x,\epsilon))}{n}=h_\mu^{\bf a}(T_1)$$
for $\mu$-a.e. $x\in X_1$.
\end{thm}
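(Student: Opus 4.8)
The plan is to squeeze the $\ba$-weighted Bowen ball between dynamical cylinder sets, reduce the statement to a weighted Shannon--McMillan--Breiman theorem, and then run a Brin--Katok type argument to eliminate the dependence on $\epsilon$. Write $\nu_i:=\mu\circ\tau_{i-1}^{-1}\in\M(X_i,T_i)$, which is ergodic because $\mu$ is; put $t_i(n):=\lceil(a_1+\cdots+a_i)n\rceil$ with $t_0(n):=0$; and for finite Borel partitions $\alpha_i$ of $X_i$ set
$$
\mathcal J_n(x):=\Big(\bigvee_{i=1}^k\ \bigvee_{j=t_{i-1}(n)}^{t_i(n)-1}T_1^{-j}\tau_{i-1}^{-1}\alpha_i\Big)(x).
$$
For the bound $\le h_\mu^\ba(T_1)$, fix $\epsilon>0$ and take the $\alpha_i$ with $\nu_i(\partial\alpha_i)=0$, with $\alpha_{i-1}$ refining $\pi_{i-1}^{-1}\alpha_i$, and of diameter small enough (relative to the moduli of continuity of the $\pi_i$) that $\mathcal J_n(x)\subseteq B_n^\ba(x,\epsilon)$ for all $x$ and all large $n$ — this is exactly the inclusion already quoted after \eqref{e-dyn}, resting on the fact that controlling the $X_1$-orbit up to time $t_{i-1}(n)$ forces control of the $X_i$-orbit up to that time. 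Then $-\frac1n\log\mu(B_n^\ba(x,\epsilon))\le-\frac1n\log\mu(\mathcal J_n(x))$, so the upper bound for both iterated limits will follow once one shows that $-\frac1n\log\mu(\mathcal J_n(x))$ converges $\mu$-a.e., and that its limit is $\le h_\mu^\ba(T_1)$ and tends to $h_\mu^\ba(T_1)$ as the diameters shrink along a refining sequence.

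The convergence of $-\frac1n\log\mu(\mathcal J_n(x))$ is the weighted Shannon--McMillan--Breiman theorem, and I would prove it exactly as Kenyon--Peres \cite{KePe96} prove \eqref{e-LY}, but for the whole tower $X_1\to\cdots\to X_k$ instead of a single factor. Telescope $\mu(\mathcal J_n(x))$ over its $k$ blocks; the $i$-th block sweeps $\sim(a_1+\cdots+a_i)n$ iterates of $\tau_{i-1}^{-1}\alpha_i$, and its conditional contribution — conditioned on the coarser (higher-$i$) blocks, which asymptotically carry only the information of the factor $X_{i+1}$ — is handled by the conditional (relative) Shannon--McMillan--Breiman theorem, in the version with an increasing sequence of conditioning partitions; it converges $\mu$-a.e.\ to $(a_1+\cdots+a_i)$ times a relative entropy. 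The addition formula $h_\mu(T_1)=h_{\nu_{i+1}}(T_{i+1})+h_\mu(T_1\mid\tau_i^{-1}\mathcal B_{X_{i+1}})$ for relative entropy over a factor, together with Abel summation of the weights $(a_1+\cdots+a_i)$, then collapses the telescoped exponent to $\sum_i a_ih_{\nu_i}(\cdot)$; shrinking the diameters replaces the partition entropies by $h_{\nu_i}(T_i)$. The technical nuisance here is that the $\alpha_i$ need not be one-sided generators, so the ``coarser blocks carry only the $X_{i+1}$-information'' claim requires some $\sigma$-algebra bookkeeping (and one works up to $\delta$-errors when comparing partition entropies with $h_{\nu_i}(T_i)$).

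For the reverse bound $\ge h_\mu^\ba(T_1)$ — the harder half — I would estimate $\mu(B_n^\ba(x,\epsilon))$ from above, following Brin--Katok \cite{BrKa83}. Choose partitions $\beta_i$ of $X_i$ with $\nu_i(\partial\beta_i)=0$ and of diameter of order $\epsilon$, let $\mathcal J_n'$ be the associated partition of $X_1$, and cover $B_n^\ba(x,\epsilon)$ by those atoms of $\mathcal J_n'$ that it meets. The number of such atoms is at most $\exp(n\rho(x,n,\epsilon))$, where $n\rho(x,n,\epsilon)$ counts the ``bad'' iterates across all factors — those $j<t_i(n)$ with $T_i^j\tau_{i-1}x$ within $\epsilon$ of $\partial\beta_i$ — and the ergodic theorem makes $\rho(x,n,\epsilon)$ converge to a limit that goes to $0$ with $\epsilon$ since the boundaries are null. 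Each such atom has measure $\le\exp(-n(h_\mu^\ba(T_1)-\delta))$ for $\mu$-a.e.\ $x$ and all large $n$, which is obtained from the weighted SMB convergence above made uniform in $n$ by Egorov's theorem, plus a ``most orbit-time in a good set'' argument carried out in each factor so that the bound applies to every $y\in B_n^\ba(x,\epsilon)$, not merely to $x$. Multiplying, $\mu(B_n^\ba(x,\epsilon))\le\exp(-n(h_\mu^\ba(T_1)-\delta(\epsilon)))$ with $\delta(\epsilon)\to0$, so $\liminf_n-\frac1n\log\mu(B_n^\ba(x,\epsilon))\ge h_\mu^\ba(T_1)-\delta(\epsilon)$ a.e.; combined with the first half and $\liminf\le\limsup$, both iterated limits are pinned to $h_\mu^\ba(T_1)$, and ergodicity already gives the a.e.\ constancy.

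The main obstacle I anticipate is this Brin--Katok half. A weighted Bowen ball is genuinely anisotropic — it tracks $X_i$ over a horizon that lengthens with $i$ — so a crude single-partition covering count overshoots; one must count atoms stratum by stratum, essentially through a Kenyon--Peres-type combinatorial estimate in the spirit of Lemma~\ref{lem-KP}, while simultaneously keeping the exponential slack $\exp(n\rho)$ negligible and forcing the SMB estimates to hold uniformly in $n$ on large-measure subsets of each factor. The $\sigma$-algebra bookkeeping in the telescoping of the weighted SMB theorem is the secondary delicate point; the ceiling functions $t_i(n)$ and the choice of boundary-null partitions are routine once those two are in hand.
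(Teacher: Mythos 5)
Your overall architecture coincides with the paper's: the inclusion of the cylinder $\mathcal J_n(x)$ in $B_n^\ba(x,\epsilon)$ together with a weighted Shannon--McMillan--Breiman theorem (the paper's Proposition \ref{SMB}, proved by telescoping conditional information functions, a Maker-type ergodic theorem, and an Algoet--Cover ``sandwich'' with Borel--Cantelli) yields $\limsup_n -\tfrac1n\log\mu(B_n^\ba(x,\epsilon))\le h_\mu^\ba(T_1)$, and a Brin--Katok name-counting argument yields the reverse inequality. The upper-bound half of your proposal is sound as sketched.

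The gap is in the lower bound, at the assertion that ``each such atom has measure $\le\exp(-n(h_\mu^\ba(T_1)-\delta))$\,\ldots\,obtained from the weighted SMB convergence made uniform in $n$ by Egorov's theorem, plus a `most orbit-time in a good set' argument.'' Egorov gives a set $G$ with $\mu(G)>1-\delta'$ on which the SMB asymptotics hold uniformly, but this controls only those atoms of $\mathcal J_n'$ that meet $G$: an atom meeting $B_n^\ba(x,\epsilon)$ but lying entirely in $X_1\setminus G$ is not controlled, and no statement about the frequency with which the orbit of $x$ visits a good set rules such atoms out. Summing over the covering atoms you would only obtain $\mu(B_n^\ba(x,\epsilon))\le e^{n\rho}e^{-n(h_\mu^\ba(T_1)-\delta)}+\mu(X_1\setminus G)$, and the additive constant $\mu(X_1\setminus G)$ destroys the exponential estimate. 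The step that actually closes the argument, and that the paper carries out, is the Brin--Katok large-atom count: the number of atoms of measure greater than $e^{(-h+(2+a_1+\cdots+a_k)\delta)n}$ (where $h$ denotes the relevant sum of partition entropies) is at most $e^{(h-(2+a_1+\cdots+a_k)\delta)n}$ because the total measure is $1$; each such atom has at most $e^{(a_1+\cdots+a_k)\delta n+C}$ Hamming-$\delta_1$-close names; hence the set $E_n$ of points of the SMB-good set whose $(\{\alpha_i\}_{i=1}^k,\ba;n)$-name is $\delta_1$-close to the name of some large atom has measure at most $e^{-\delta n+C+\delta}$, which is summable, and Borel--Cantelli then shows that for $\mu$-a.e.\ $x$ and all large $n$ \emph{every} atom Hamming-close to the name of $x$ is small. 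Without this counting/Borel--Cantelli step the lower bound does not go through. (A minor remark: the anisotropy you flag as the main obstacle is harmless --- the paper treats the whole $(\{\alpha_i\}_{i=1}^k,\ba;n)$-name as a single word of length $\lceil(a_1+\cdots+a_k)n\rceil$ and performs one Hamming count; Lemma \ref{lem-KP} is used only in the upper-bound half of Theorem \ref{thm-1.1}, not here.)
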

We shall postpone the proof of Theorem \ref{thm-4.1} to  Appendix \ref{s-a}. In the following we prove the lower bound part of Theorem \ref{thm-1.1} for sub-additive potentials rather than additive potentials.
\begin{pro}
\label{thm-3.3}
Let  $\Phi=\{\log \phi_n\}_{n=1}^\infty$ be a sub-additive potential on $X_1$.
 Then
$$
P^\ba (T_1,\Phi)\ge
\sup\left\{\Phi_*(\mu)+h^\ba_\mu(T_1):\; \mu\in \M(X_1,T_1), \Phi_*(\mu)\neq -\infty \right\}.
$$
\end{pro}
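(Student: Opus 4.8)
The plan is threefold: (i) reduce the claimed inequality, for a fixed $\mu$, to the case where $\mu$ is ergodic, using the ergodic decomposition; (ii) for ergodic $\mu$, combine Kingman's subadditive ergodic theorem with the weighted Brin--Katok formula (Theorem~\ref{thm-4.1}) to produce, for each sufficiently small $\epsilon>0$, a set $G_N\subseteq X_1$ of measure $>1/2$ on which $\mu$ decays like $e^{-nh_\mu^\ba(T_1)}$ on $\ba$-weighted Bowen balls while $\phi_{\lceil a_1n\rceil}$ grows like $e^{a_1 n\Phi_*(\mu)}$; (iii) feed $G_N$ into a covering argument in the spirit of Bowen and Pesin--Pitskel'. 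For (i), fix $\mu\in\M(X_1,T_1)$ with $\Phi_*(\mu)\neq-\infty$ and let $\mu=\int\nu\,d\rho(\nu)$ be its ergodic decomposition. Since $\tau_{i-1}\circ T_1=T_i\circ\tau_{i-1}$, the map $\nu\mapsto\nu\circ\tau_{i-1}^{-1}$ carries this decomposition onto the ergodic decomposition of $\mu\circ\tau_{i-1}^{-1}$; combining the affinity of measure-theoretic entropy under ergodic decomposition with the fact that $\Phi_*$ is the infimum of the affine functionals $\nu\mapsto\frac1n\int\log\phi_n\,d\nu$ (each bounded above, as $\phi_n$ is upper semi-continuous on the compact space $X_1$), one gets
$$
\Phi_*(\mu)+h^\ba_\mu(T_1)=\int\bigl(\Phi_*(\nu)+h^\ba_\nu(T_1)\bigr)\,d\rho(\nu).
$$
In particular $\Phi_*(\nu)\neq-\infty$ for $\rho$-a.e.\ $\nu$, and the essential supremum of $\nu\mapsto\Phi_*(\nu)+h^\ba_\nu(T_1)$ is at least $\Phi_*(\mu)+h^\ba_\mu(T_1)$; hence it suffices to prove $P^\ba(T_1,\Phi)\ge\Phi_*(\mu)+h^\ba_\mu(T_1)$ for ergodic $\mu$ with $\Phi_*(\mu)\neq-\infty$.

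So let $\mu$ be ergodic, write $F=\Phi_*(\mu)$ and $h=h^\ba_\mu(T_1)$, fix $s<F+h$, and choose $\eta>0$ with $F+h-4\eta-s>0$. Kingman's subadditive ergodic theorem (applicable since $\log\phi_1$ is bounded above and $F\neq-\infty$) gives $\frac1m\log\phi_m(x)\to F$ for $\mu$-a.e.\ $x$, whence $\frac1{a_1n}\log\phi_{\lceil a_1n\rceil}(x)\to F$ $\mu$-a.e.; Theorem~\ref{thm-4.1} gives $\lim_{\delta\to0}\liminf_{n\to\infty}\frac{-\log\mu(B^\ba_n(x,\delta))}{n}=h$ $\mu$-a.e. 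Therefore one may fix $\epsilon>0$ so small that the set where both $\liminf_n\frac{-\log\mu(B^\ba_n(x,2\epsilon))}{n}\ge h-\eta$ and $\liminf_n\frac1{a_1n}\log\phi_{\lceil a_1n\rceil}(x)\ge F-\eta$ has $\mu$-measure $>1/2$; inside that set the sets
$$
G_N=\Bigl\{x:\ \mu\bigl(B^\ba_n(x,2\epsilon)\bigr)\le e^{-n(h-2\eta)}\ \text{and}\ \phi_{\lceil a_1n\rceil}(x)^{1/a_1}\ge e^{n(F-2\eta)}\ \text{for all }n\ge N\Bigr\}
$$
increase (in $N$) to a set of measure $>1/2$, so $\mu(G_N)>1/2$ for some $N$, which we now fix.

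For the covering estimate, let $\{(n_j,A_j)\}_j$ be any family with $n_j\ge N$, $A_j\in{\mathcal T}^\ba_{n_j,\epsilon}$ and $\bigcup_jA_j\supseteq X_1$. For each $j$ with $A_j\cap G_N\neq\emptyset$ pick $z_j\in A_j\cap G_N$; since $A_j\subseteq B^\ba_{n_j}(x_j,\epsilon)$ for some $x_j$ and $z_j\in B^\ba_{n_j}(x_j,\epsilon)$, the triangle inequality for the metric $d^\ba_{n_j}$ yields $A_j\subseteq B^\ba_{n_j}(z_j,2\epsilon)$. As $z_j\in G_N$ and $n_j\ge N$,
$$
\mu(A_j)\le\mu\bigl(B^\ba_{n_j}(z_j,2\epsilon)\bigr)\le e^{-n_j(h-2\eta)},\qquad g_{n_j}(A_j):=\sup_{z\in A_j}\phi_{\lceil a_1n_j\rceil}(z)^{1/a_1}\ge e^{n_j(F-2\eta)},
$$
so $e^{-sn_j}g_{n_j}(A_j)\ge e^{n_j(F-2\eta-s)}=e^{n_j(F+h-4\eta-s)}e^{-n_j(h-2\eta)}\ge\mu(A_j)$. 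Since the $A_j$ meeting $G_N$ already cover $G_N$, summing gives $\sum_j e^{-sn_j}g_{n_j}(A_j)\ge\mu(G_N)>1/2$; hence $\Lambda^{\ba,s}_{\Phi,N,\epsilon}(X_1)\ge1/2$, so $\Lambda^{\ba,s}_{\Phi,\epsilon}(X_1)\ge1/2>0$, which by definition of the critical value forces $s\le P^\ba(T_1,\Phi,X_1,\epsilon)\le P^\ba(T_1,\Phi)$. Letting $s\uparrow F+h$ finishes the ergodic case, and with (i) the proposition. (If $h=+\infty$, run the same computation with $h$ replaced by an arbitrarily large constant $L$ supplied by Theorem~\ref{thm-4.1} and $\eta$ small relative to $L+F-s$.)

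The substance of the argument is imported from Theorem~\ref{thm-4.1}, which is proved separately in the Appendix; granting it, the proof above is essentially a routine covering argument. The two places requiring genuine care are the ergodic-decomposition reduction --- in particular checking that $\Phi_*$ is affinely averaged over components and stays $>-\infty$ off a $\rho$-null set --- and the upgrade of the \emph{pointwise} Brin--Katok/Kingman limits to the \emph{uniform} estimates valid on the single set $G_N$, for which the enlargement of $\epsilon$ to $2\epsilon$ (needed because the $A_j$ in an arbitrary cover are not centred at points of $G_N$) is the key device.
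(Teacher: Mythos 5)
Your proof is correct and takes essentially the same route as the paper's: ergodic decomposition (the paper cites Jacobs' theorem and \cite[Proposition A.1(3)]{FeHu10} for the affinity of $h^\ba_\bullet(T_1)$ and $\Phi_*$), then for ergodic $\mu$ the weighted Brin--Katok formula plus Kingman's theorem to get a set of measure $>1/2$ with uniform estimates for $n\ge N$, followed by the same covering argument with the radius enlargement (the paper covers by sets in ${\mathcal T}^\ba_{n_j,\epsilon/2}$ and controls $\mu$ on $\epsilon$-balls, which is your $\epsilon$ versus $2\epsilon$). Your handling of $h^\ba_\mu(T_1)=+\infty$ by truncation matches the paper's use of $\min\{\delta^{-1},h^\ba_\mu(T_1)-\delta\}$.
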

\begin{proof}   By Jacobs' theorem (cf. \cite[Theorem 8.4]{Wal82}) and Proposition A.1.(3) in \cite{FeHu10}, if $\mu=\int_{\E(X_1, T_1)} m \;d\tau(m)$ is the ergodic decomposition of an element $\mu$ in $\M(X_1, T_1)$, then
$$h^\ba_\mu(T_1)=\int_{\E(X_1, T_1)} h^\ba_m(T_1) \;d\tau(m),\quad  \Phi_*(\mu)=\int_{\E(X_1, T_1)} \Phi_*(m)\;d\tau(m).$$
 Hence to prove the proposition, it suffices to show that
\begin{equation}
\label{e-0.2}
P^\ba (T_1,\Phi)\geq \Phi_*(\mu)+\min\{\delta^{-1}, \;h^\ba_\mu(T_1)-\delta\}-\delta
\end{equation}
 for any $\delta>0$ and any ergodic $\mu\in \M(X_1,T_1)$ with $\Phi_*(\mu)\neq -\infty$.

 For this purpose, we fix  $\delta>0$ and an ergodic measure $\mu$ on $X_1$ with $\Phi_*(\mu)\neq -\infty$. Write $$H:=\min\{\delta^{-1},  \;h^\ba_\mu(T_1)-\delta\}.$$
By Theorem  \ref{thm-4.1}, we can choose $\epsilon>0$ so that
\begin{equation}
\label{e-2.1}
\liminf_{n\to \infty}\frac{-\log \mu(B^\ba_n(x,\epsilon))}{n}>H \quad \mbox{ for  $\mu$-a.e. }x\in X_1.
\end{equation}
Since $\Phi$ is  sub-additive,  by Kingman's subadditive ergodic theorem (cf. \cite[p. 231]{Wal82} and \cite[Proposition A.1.]{FeHu10}), we have
$$\lim_{n\to \infty} \frac{1}{n} \log \phi_n(x)=\Phi_*(\mu)$$ for $\mu$-a.e. $x\in X_1$. Hence
 there exists a large $N\in \N$ and a Borel set $E_N\subset X_1$ with $\mu(E_N)>1/2$ such that for any  $x\in E_N$ and $n\geq N$,
\begin{equation}
\label{e-2.2}
\mu(B^\ba_n(x,\epsilon))<\exp(-nH),\quad \log \phi_{\lceil a_1n\rceil}(x)\geq  a_1n\Phi_*(\mu)-a_1n\delta.
\end{equation}

Now assume that $\Gamma
=\{(n_j, A_j)\}_i$ is a countable collection so that $n_j\geq N$, $A_j\in {\mathcal T}^\ba_{n_j,\epsilon/2}$ (cf. \eqref{e-0.1} for the definition) and $\bigcup_jA_j=X_1$.  By definition, for each $j$, there exists  $x_j\in X$ so that $A_j\subseteq B^\ba_{n_j}(x_j,\epsilon/2)$.
  Set $$\I:=\{j:\; A_j\cap E_N\neq \emptyset\}.$$
   For $j\in \I$, pick $y_j\in  A_j\cap E_N$; then we have
$$
A_j\subseteq B_{n_j}^\ba(x_j,\epsilon/2)\subseteq  B_{n_j}^\ba(y_j,\epsilon)
$$
and thus
$$
 \mu(A_j)\leq \mu(B_{n_j}^\ba(y_j,\epsilon))\leq \exp(-n_jH);
$$
moreover,
$$
\frac{1}{a_1} \sup_{x\in A_j} \log \phi_{\lceil a_1n_j \rceil}(x) \geq \frac{1}{a_1}\log  \phi_{\lceil a_1n_j \rceil}(y_j)\geq n_j\Phi_*(\mu)-n_j\delta.
$$
Set
$s:=\Phi_*(\mu)+H-\delta$. Then for any $j\in \I$,
$$
\exp\left(-sn_j+\frac{1}{a_1}\sup_{x\in A_j}\phi_{\lceil a_1n_j\rceil}(x)\right )
\geq {\mu(A_j)} \exp \left(n_j \left(-s+\Phi_*(\mu)+H-\delta\right)\right)=\mu(A_j).
$$
Summing over $j\in \I$, we have
\begin{equation*}
\begin{split}
\sum_{j\in \I}\exp \left(-sn_j+\frac{1}{a_1} \sup_{x\in A_j} \phi_{\lceil a_1n_j \rceil}(x)\right)&\geq \sum_{j\in \I}\mu(A_j)
\geq \mu\left(\bigcup_{j\in \I} A_j\right)
\geq \mu(E_N)\geq 1/2.
\end{split}
\end{equation*}
It follows that $\Lambda_{\Phi, \epsilon}^{\ba, s}(X_1)\geq \Lambda_{\Phi, N, \epsilon}^{\ba,  s}(X_1)\geq 1/2$, and thus
$$P^\ba(T_1,\Phi)\geq P^\ba (T_1,\Phi, X_1,\epsilon/2)\geq s= \Phi_*(\mu)+\min\{\delta^{-1}, \;h^\ba_\mu(T_1)-\delta\}-\delta,$$
 as desired.
\end{proof}

\section{The proof of Theorem \ref{thm-1.1}: upper bound}
\label{s-5}
In this section, we prove the upper bound in Theorem \ref{thm-1.1}, that is, for any $f\in C(X_1)$ and $\delta>0$, there exists $\mu\in \M(X_1, T_1)$ such that
$$
P^\ba(T_1, f)\leq h^\ba_\mu(T_1)+\int_{X_1} f d\mu+\delta.
$$

Before proving the above result, we first give some lemmas.

\begin{lem}\label{lem-en.0}
Let $(X, T)$ be a TDS and $\mu\in \M(X)$. Let $\alpha=\{A_1,\ldots, A_M\}$ be a  Borel partition of $X$ with cardinality $M$.
Write  for brevity
$$h(n):=H_{\frac{1}{n}\sum_{i=0}^{n-1}\mu\circ T^{-i}}(\alpha), \quad h(n,m):=H_{\frac{1}{m}\sum_{i=n}^{m+n-1}\mu\circ T^{-i}}(\alpha).
$$
for  $n, m\in \mathbb{N}$.
Then
\begin{itemize}
\item[(i)] $h(n)\leq \log M$ and $h(n,m)\leq \log M$ for $n,m\in \N$.
\item[(ii)] $|h(n+1)-h(n)|\leq \frac{1}{n+1}\log \left(3M^2(n+1)\right)$ for all $n\in \mathbb{N}$.
\item[(iii)] $\left|h(n+m)-\frac{n}{n+m}h(n)-\frac{m}{n+m}h(n,m)\right|\leq \log 2$ for all $n, m\in \N$.
\end{itemize}
\end{lem}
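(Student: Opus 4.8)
The plan is to reduce all three items to two standard properties of the map $\nu\mapsto H_\nu(\alpha)$ on the space of Borel probability measures on $X$, valid for an arbitrary finite Borel partition $\alpha$: its \emph{concavity} and its \emph{almost convexity}. Precisely, I will use that if $\nu=\sum_{i=1}^{r}p_i\nu_i$ is a convex combination of Borel probability measures, then
\begin{equation*}
\sum_{i=1}^{r}p_iH_{\nu_i}(\alpha)\;\le\;H_\nu(\alpha)\;\le\;\sum_{i=1}^{r}p_iH_{\nu_i}(\alpha)-\sum_{i=1}^{r}p_i\log p_i .
\end{equation*}
The left inequality is the concavity of $t\mapsto-t\log t$ applied to each cell of $\alpha$; the right one is obtained by passing to the probability measure on $X\times\{1,\dots,r\}$ that assigns mass $p_i\nu_i(A)$ to each $A\times\{i\}$ and comparing the entropy of the partition $\{A\times\{1,\dots,r\}:A\in\alpha\}$ with the entropy of its join with $\{X\times\{i\}:i\}$ (both facts are standard; see \cite[Ch.~4]{Wal82}). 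No $T$-invariance of $\mu$ enters anywhere. Granting this, item (i) is immediate, since each averaged measure in the statement is a Borel probability measure and $H_\theta(\alpha)\le\log\#\alpha=\log M$ for every such $\theta$.

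For (iii), I would put $\nu_\ell:=\frac1\ell\sum_{i=0}^{\ell-1}\mu\circ T^{-i}$ and $\nu':=\frac1m\sum_{i=n}^{n+m-1}\mu\circ T^{-i}$, so that $h(n+m)=H_{\nu_{n+m}}(\alpha)$, $h(n)=H_{\nu_n}(\alpha)$, $h(n,m)=H_{\nu'}(\alpha)$, and observe that
\begin{equation*}
\nu_{n+m}=\frac{n}{n+m}\,\nu_n+\frac{m}{n+m}\,\nu'.
\end{equation*}
Applying the two inequalities above with $r=2$ and $(p_1,p_2)=\big(\tfrac{n}{n+m},\tfrac{m}{n+m}\big)$ then pins $h(n+m)$ between $\tfrac{n}{n+m}h(n)+\tfrac{m}{n+m}h(n,m)$ and that same quantity plus $-p_1\log p_1-p_2\log p_2\le\log2$, which is precisely (iii).

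For (ii), I would use instead the decomposition $\nu_{n+1}=\tfrac{n}{n+1}\nu_n+\tfrac{1}{n+1}(\mu\circ T^{-n})$. The lower bound gives $h(n+1)\ge\tfrac{n}{n+1}h(n)$, hence $h(n)-h(n+1)\le\tfrac{1}{n+1}h(n)\le\tfrac{1}{n+1}\log M$; the upper bound, after discarding the nonnegative term $\tfrac{1}{n+1}h(n)$ from its right-hand side, gives $h(n+1)-h(n)\le\tfrac{1}{n+1}H_{\mu\circ T^{-n}}(\alpha)+H\big(\tfrac{n}{n+1},\tfrac{1}{n+1}\big)\le\tfrac{1}{n+1}\log M+H\big(\tfrac{n}{n+1},\tfrac{1}{n+1}\big)$, where $H(p,1-p):=-p\log p-(1-p)\log(1-p)$. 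The one estimate needing a little care is $H\big(\tfrac{n}{n+1},\tfrac{1}{n+1}\big)=\tfrac{n}{n+1}\log\tfrac{n+1}{n}+\tfrac{1}{n+1}\log(n+1)\le\tfrac{1}{n+1}+\tfrac{1}{n+1}\log(n+1)=\tfrac{1}{n+1}\log\big(e(n+1)\big)$, using $\log(1+1/n)\le1/n$. Combining the two one-sided bounds yields $|h(n+1)-h(n)|\le\tfrac{1}{n+1}\log\big(eM(n+1)\big)\le\tfrac{1}{n+1}\log\big(3M^2(n+1)\big)$, since $e<3\le3M$ because $M\ge1$. The main obstacle, such as it is, is just this constant bookkeeping in (ii); everything else is a direct application of the two entropy inequalities.
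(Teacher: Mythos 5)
Your proof is correct and follows essentially the same route as the paper's: both rest on the two-sided inequality $0\le H_{p\nu_1+(1-p)\nu_2}(\alpha)-pH_{\nu_1}(\alpha)-(1-p)H_{\nu_2}(\alpha)\le -p\log p-(1-p)\log(1-p)$, the decomposition $\nu_{n+m}=\frac{n}{n+m}\nu_n+\frac{m}{n+m}\nu'$ for (iii), and the decomposition $\nu_{n+1}=\frac{n}{n+1}\nu_n+\frac{1}{n+1}\mu\circ T^{-n}$ together with $(1+1/n)^n<e<3$ for (ii). The only (cosmetic) difference is that you track the two one-sided bounds in (ii) separately and land on the slightly sharper constant $eM(n+1)$, whereas the paper applies the triangle inequality and absorbs two copies of $\frac{1}{n+1}\log M$, giving $3M^2(n+1)$; both satisfy the stated estimate.
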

\begin{proof} (i) is obvious. Now we turn to the proof of (ii). It is well known (see e.g. \cite[Theorem 8.1]{Wal82} and the proof therein) that for any  finite Borel partition $\beta$ of $X$,
$\nu_1,\nu_2\in \M(X)$ and $p\in [0,1]$,
\begin{equation} \label{eq-en-es}
\begin{aligned}
0&\le H_{p\nu_1+(1-p)\nu_2}(\beta)-pH_{\nu_1}(\beta)-(1-p)H_{\nu_2}(\beta)\\
&\le -(p\log p+(1-p)\log (1-p))\\
&\le \log 2.
\end{aligned}
\end{equation}
Let $n\in \mathbb{N}$. Applying \eqref{eq-en-es} and (i), we have
\begin{align*}
| h & (n+1) -h(n)|\\
&=\Big| h(n+1)-\frac{n}{n+1}h(n)-\frac{1}{n+1}H_{\mu\circ T^{-n}}(\alpha)-\frac{1}{n+1}h(n)+\frac{1}{n+1}H_{\mu\circ T^{-n}}(\alpha)\Big|\\
&\le \Big| h(n+1)-\frac{n}{n+1}h(n)-\frac{1}{n+1}H_{\mu\circ T^{-n}}(\alpha)\Big|+\frac{2}{n+1}\log M\\
&\le -\frac{n}{n+1}\log \frac{n}{n+1}-\frac{1}{n+1}\log \frac{1}{n+1}+\frac{2}{n+1}\log M\\
&\le \frac{1}{n+1}\log \big(3 M^2(n+1)\big),
\end{align*}
where we use the fact $(1+1/n)^n<e<3$ in the last inequality.
This proves (ii).

Finally, since
$$\frac{1}{n+m}\sum_{i=0}^{n+m-1}\mu\circ T^{-i}=\frac{n}{n+m}\left (\frac{1}{n}\sum_{i=0}^{n-1}\mu\circ T^{-i}\right)+\frac{m}{n+m}\left(\frac{1}{m}\sum_{i=n}^{n+m-1}\mu\circ T^{-i}\right)$$
for $n,m\in \mathbb{N}$, (iii) follows from \eqref{eq-en-es}.
\end{proof}

\begin{lem}
\label{lem-en.1}
Let $(X, T)$ be a TDS and $\mu\in \M(X)$. For $\epsilon>0$ and $\ell,M\in \N$, let $H_\bullet(\epsilon, M; \ell)$ be defined as in \eqref{e-HEML}. Then the following statements hold.
\begin{enumerate}
\item For all $n\in \mathbb{N}$,
\begin{align*}
\Big|H_{\frac{1}{n} \sum_{i=0}^{n-1} \mu\circ T^{-i}} & (\epsilon, M;\ell)-H_{\frac{1}{n+1}\sum_{i=0}^{n}\mu\circ T^{-i}}(\epsilon, M;\ell) \Big|\\
& \le \frac{1}{\ell(n+1)}\log \big(3M^{2\ell}(n+1)\big).
\end{align*}
\item  For all $n, m\in \N$,
\begin{equation}
\label{e-inequality}
\begin{aligned}
 \frac{n}{n+m}&  H_{\frac{1}{n}\sum_{i=0}^{n-1}\mu\circ T^{-i}}(\epsilon, M;\ell)
  +\frac{m}{n+m}H_{\frac{1}{m}\sum_{i=n}^{n+m-1}\mu\circ T^{-i}}(\epsilon, M;\ell)\\
 &\leq  H_{\frac{1}{n+m}\sum_{i=0}^{m+n-1}\mu\circ T^{-i}}(\epsilon, M;\ell)+\frac{\log 2}{\ell}.
\end{aligned}
\end{equation}
\end{enumerate}
\end{lem}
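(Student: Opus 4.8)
The two statements are the $H_\bullet(\epsilon,M;\ell)$-analogues of parts (ii) and (iii) of Lemma \ref{lem-en.0}, and the natural strategy is to reduce them to that lemma by choosing, for each measure appearing on the right-hand side, a partition $\alpha$ that nearly attains the relevant infimum over $\mathcal{P}_X(\epsilon,M)$. I would work throughout with the ``block'' partition $\widehat\alpha_\ell:=\bigvee_{i=0}^{\ell-1}T^{-i}\alpha$, whose cardinality is at most $M^\ell$, and note that $H_\theta(\epsilon,M;\ell)=\inf_{\alpha\in\mathcal{P}_X(\epsilon,M)}\frac1\ell H_\theta(\widehat\alpha_\ell)$; the key identity linking the two scales is that the time-averaged measures transform correctly, i.e. the operator $\theta\mapsto H_\theta(\widehat\alpha_\ell)$ applied to $\frac1n\sum_{i=0}^{n-1}\mu\circ T^{-i}$ is exactly the quantity $h(n)$ of Lemma \ref{lem-en.0} with $M$ replaced by $M^\ell$ and $\alpha$ replaced by $\widehat\alpha_\ell$.

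For part (1), fix $\delta>0$ and pick $\alpha\in\mathcal{P}_X(\epsilon,M)$ with $\frac1\ell H_{\nu_n}(\widehat\alpha_\ell)\le H_{\nu_n}(\epsilon,M;\ell)+\delta$, where $\nu_n=\frac1n\sum_{i=0}^{n-1}\mu\circ T^{-i}$ and $\nu_{n+1}$ is defined analogously. Applying Lemma \ref{lem-en.0}(ii) to the partition $\widehat\alpha_\ell$ (which has at most $M^\ell$ atoms) gives
\[
\Big|\tfrac1\ell H_{\nu_{n+1}}(\widehat\alpha_\ell)-\tfrac1\ell H_{\nu_n}(\widehat\alpha_\ell)\Big|\le \frac{1}{\ell(n+1)}\log\big(3M^{2\ell}(n+1)\big),
\]
and then, since $H_{\nu_{n+1}}(\epsilon,M;\ell)\le \frac1\ell H_{\nu_{n+1}}(\widehat\alpha_\ell)$ on one side and $\frac1\ell H_{\nu_n}(\widehat\alpha_\ell)\le H_{\nu_n}(\epsilon,M;\ell)+\delta$ on the other, the two one-sided estimates combine and letting $\delta\searrow0$ yields the claim. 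For part (2), the direction of the inequality is what makes it cleaner: choose a \emph{single} partition $\alpha\in\mathcal{P}_X(\epsilon,M)$ that nearly attains the infimum defining $H_{\frac1{n+m}\sum_{i=0}^{n+m-1}\mu\circ T^{-i}}(\epsilon,M;\ell)$, apply Lemma \ref{lem-en.0}(iii) with that $\alpha$ (again with $M\rightsquigarrow M^\ell$), and then bound the two terms on the left by the infimum: $H_{\nu_n}(\epsilon,M;\ell)\le\frac1\ell H_{\nu_n}(\widehat\alpha_\ell)$ and likewise for $\frac1m\sum_{i=n}^{n+m-1}\mu\circ T^{-i}$. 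The convexity-of-entropy inequality \eqref{eq-en-es}, which drives Lemma \ref{lem-en.0}(iii), only ever produces the $\le\log 2$ gap in the direction we need, so no reverse estimate is required.

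The only mildly delicate point — and the one I would be most careful about — is the bookkeeping of which quantity is being approximated by which partition in part (2): because $H_\bullet(\epsilon,M;\ell)$ is an \emph{infimum} over partitions, one cannot use different near-optimal partitions for the three measures simultaneously and still invoke Lemma \ref{lem-en.0}(iii), which is a pointwise statement about one fixed partition. The resolution is exactly as indicated above: approximate only the right-hand (joined-time-average) term by a partition $\alpha$, and accept the (correctly-signed) slack when passing from $\frac1\ell H_{\nu_n}(\widehat\alpha_\ell)$ down to the infimum $H_{\nu_n}(\epsilon,M;\ell)$ on the two left-hand terms. For part (1) the same care is needed but there one genuinely does need two partitions (one for $\nu_n$, one for $\nu_{n+1}$) and obtains the bound by running the argument symmetrically and taking $\delta\to0$; this costs nothing since the estimate is two-sided and the error term $\frac{1}{\ell(n+1)}\log(3M^{2\ell}(n+1))$ is uniform in the partition. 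No new ideas beyond Lemma \ref{lem-en.0} and the definition \eqref{e-HEML} are needed.
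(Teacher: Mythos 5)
Your proposal is correct and is exactly the argument the paper intends: its proof of this lemma consists of the single sentence that the statements ``directly follow from the definition of $H_{\bullet}(\epsilon, M;\ell)$ and Lemma \ref{lem-en.0},'' and your reduction via the block partition $\bigvee_{i=0}^{\ell-1}T^{-i}\alpha$ (of cardinality at most $M^\ell$), with near-optimal partitions chosen on the correct side of each inequality, is the intended filling-in of that sentence. In particular, your observation that in part (2) one must approximate only the $\nu_{n+m}$-term and accept the correctly signed slack on the other two is the right (and only) delicate point.
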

\begin{proof} The statements directly follow  from  the definition of $H_{\bullet}(\epsilon, M;\ell)$ and Lemma \ref{lem-en.0}.
\end{proof}

\begin{lem}[Lemma 2.4 of \cite{CFH08}]
\label{lem-6.1} Let $\nu\in \M(X)$ and $M\in \N$. Suppose $\xi=\{A_1,\ldots,A_j\}$
is a Borel  partition of $X$ with $j\leq M$. Then for any positive integers
$n,\ell$ with $n\geq 2\ell$, we have
$$
\frac 1n H_\nu\left(\bigvee_{i=0}^{n-1}T^{-i}\xi\right)\leq \frac
1\ell
H_{\nu_n}\left(\bigvee_{i=0}^{\ell-1}T^{-i}\xi\right)+\frac{2\ell}{n}\log
M,
$$
where $\nu_n=\frac{1}{n}\sum_{i=0}^{n-1}\nu\circ T^{-i}$.
\end{lem}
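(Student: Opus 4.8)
The plan is to run the standard ``sliding window'' argument: decompose the orbit segment $\{0,1,\dots,n-1\}$ into disjoint blocks of length $\ell$ starting from each of the $\ell$ possible phases, average over the phases, and invoke concavity of the static entropy. Throughout I would use three elementary facts about $H_\theta(\cdot)$, all in \cite[Chapter 4]{Wal82}: subadditivity under joins, $H_\theta(\alpha\vee\beta)\le H_\theta(\alpha)+H_\theta(\beta)$; the change-of-measure identity $H_\theta(T^{-m}\alpha)=H_{\theta\circ T^{-m}}(\alpha)$; and the bound $H_\theta(\alpha)\le\log N$ whenever $\alpha$ has $N$ atoms, together with concavity of the map $\theta\mapsto H_\theta(\alpha)$.

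Fix $\ell$ and write $\alpha_\ell=\bigvee_{i=0}^{\ell-1}T^{-i}\xi$, which has at most $M^\ell$ atoms. For a phase $k\in\{0,1,\dots,\ell-1\}$ let $p_k$ be the largest integer with $k+p_k\ell\le n$; then $\{0,\dots,n-1\}$ splits into a head $\{0,\dots,k-1\}$, the full blocks $\{k+j\ell,\dots,k+(j+1)\ell-1\}$ for $0\le j\le p_k-1$, and a tail $\{k+p_k\ell,\dots,n-1\}$, and by maximality of $p_k$ the combined length of head and tail is less than $2\ell$. Applying subadditivity under joins, bounding the head and tail contributions together by $2\ell\log M$ (their total length being below $2\ell$, and $\xi$ having at most $M$ atoms), and rewriting each full block through the change-of-measure identity, I obtain
\[
H_\nu\Big(\bigvee_{i=0}^{n-1}T^{-i}\xi\Big)\le\sum_{j=0}^{p_k-1}H_{\nu\circ T^{-(k+j\ell)}}(\alpha_\ell)+2\ell\log M .
\]

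Summing this over $k=0,1,\dots,\ell-1$, the left side becomes $\ell\,H_\nu(\bigvee_{i=0}^{n-1}T^{-i}\xi)$, the error term becomes $2\ell^2\log M$, and the double sum $\sum_k\sum_j H_{\nu\circ T^{-(k+j\ell)}}(\alpha_\ell)$ ranges over pairwise distinct indices $k+j\ell\in\{0,\dots,n-1\}$, hence is at most $\sum_{m=0}^{n-1}H_{\nu\circ T^{-m}}(\alpha_\ell)$, which by concavity is at most $n\,H_{\nu_n}(\alpha_\ell)$. Thus $\ell\,H_\nu(\bigvee_{i=0}^{n-1}T^{-i}\xi)\le n\,H_{\nu_n}(\alpha_\ell)+2\ell^2\log M$, and dividing by $n\ell$ gives the claim. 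I expect the only slightly delicate point to be the combinatorial bookkeeping: verifying that the head-plus-tail length is bounded by $2\ell$ uniformly in the phase, and that the doubly indexed family $\{\,k+j\ell:0\le k<\ell,\ 0\le j<p_k\,\}$ injects into $\{0,\dots,n-1\}$ so no index is double-counted; the hypothesis $n\ge 2\ell$ is only used to guarantee that each phase contributes at least one full block.
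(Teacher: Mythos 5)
Your proof is correct and is precisely the standard sliding-window argument used to establish Lemma 2.4 in the cited reference \cite{CFH08} (the paper itself only quotes the lemma without proof): phase decomposition, subadditivity plus the crude $\log M$ bound on the short head and tail, the change-of-measure identity on the full blocks, and concavity of $\theta\mapsto H_\theta(\cdot)$ to pass to $\nu_n$. The combinatorial bookkeeping you flag is handled correctly — the head-plus-tail length is at most $k+\ell-1<2\ell$ by maximality of $p_k$, and the map $(k,j)\mapsto k+j\ell$ is injective by uniqueness of Euclidean division — so there is nothing to add.
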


The following lemma is a slight variant of  \cite[Lemma 4.1] {KePe96} by Kenyon and Peres.

\begin{lem}
\label{lem-KP}
Let $p\in \N$. Let $u_j: \N\to \R$ ($j=1,\ldots, p$) be bounded  functions with
$$
\lim_{n\to \infty} |u_j(n+1)-u_j(n)|=0.
$$
Then for any positive numbers $c_1,\ldots, c_p$ and $r_1,\ldots, r_p$,
$$
\limsup_{n\to +\infty}\sum_{j=1}^p (u_j(\lceil c_jn\rceil)-u_j(\lceil r_jn \rceil))\geq 0.
$$
\end{lem}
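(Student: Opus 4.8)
The plan is to deduce the claim from the fact that the \emph{logarithmic Ces\`aro average} of the sequence $G(n):=\sum_{j=1}^p\bigl(u_j(\lceil c_jn\rceil)-u_j(\lceil r_jn\rceil)\bigr)$ vanishes. First I would extend each $u_j$ to a bounded function $\tilde u_j\colon(0,\infty)\to\R$ that is affine on every interval $[m,m+1]$, $m\in\N$, and constant on $(0,1]$; then $\varepsilon_j(x):=\sup_{y\ge x}|\tilde u_j(y+1)-\tilde u_j(y)|\to0$ as $x\to\infty$ by hypothesis, and since $|\lceil c_jn\rceil-c_jn|<1$ one gets $u_j(\lceil c_jn\rceil)=\tilde u_j(c_jn)+o(1)$, and likewise with $r_j$. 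Hence $G(n)=\widetilde G(n)+o(1)$ where $\widetilde G(n):=\sum_{j=1}^p\bigl(\tilde u_j(c_jn)-\tilde u_j(r_jn)\bigr)$, so it is enough to show $\limsup_{n}\widetilde G(n)\ge0$.

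The key step is the estimate $\frac1{\log N}\sum_{n=1}^N\frac1n\bigl(\tilde u_j(c_jn)-\tilde u_j(r_jn)\bigr)\to0$ for each fixed $j$. I would prove it by comparing the sum with $\int_1^N\frac1x\bigl(\tilde u_j(c_jx)-\tilde u_j(r_jx)\bigr)\,dx$: the substitutions $y=c_jx$ and $y=r_jx$ identify this integral with $\int_{r_jN}^{c_jN}\frac{\tilde u_j(y)}{y}\,dy-\int_{r_j}^{c_j}\frac{\tilde u_j(y)}{y}\,dy$ (after possibly swapping the roles of $c_j$ and $r_j$), which is bounded in absolute value by $2\|u_j\|_\infty\,|\log(c_j/r_j)|$, i.e.\ $O(1)$ uniformly in $N$. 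For the sum-versus-integral discrepancy, on each interval $[n,n+1]$ one bounds $\bigl|\frac{h_j(n)}{n}-\frac{h_j(x)}{x}\bigr|$, with $h_j:=\tilde u_j(c_j\cdot)-\tilde u_j(r_j\cdot)$, by $\frac{\omega_j(n)}{n}+\frac{2\|u_j\|_\infty}{n^2}$, where $\omega_j(n)\to0$ because the oscillation of $\tilde u_j(c_j\cdot)$ over an interval of length $c_j$ is at most $(\lceil c_j\rceil+1)\,\varepsilon_j(c_jn-1)$. Summing and using $\sum_{n\le N}\omega_j(n)/n=o(\log N)$ (a routine Ces\`aro bound, since $\omega_j(n)\to0$) together with $\sum_n n^{-2}<\infty$ gives $\sum_{n=1}^N\frac1n h_j(n)=O(1)+o(\log N)$, and adding over the finitely many $j$ yields $\frac1{\log N}\sum_{n=1}^N\frac{\widetilde G(n)}{n}\to0$.

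Finally I would conclude by contradiction: if $\limsup_n\widetilde G(n)<0$, choose $\delta>0$ and $N_0$ with $\widetilde G(n)\le-\delta$ for all $n\ge N_0$; then $\sum_{n=1}^N\frac1n\widetilde G(n)\le O(1)-\delta\sum_{n=N_0}^N\frac1n$, so $\frac1{\log N}\sum_{n=1}^N\frac1n\widetilde G(n)\to-\delta<0$, contradicting the previous step. Hence $\limsup_n\widetilde G(n)\ge0$, and since $G(n)-\widetilde G(n)\to0$ this gives $\limsup_n G(n)\ge0$, as required. The one genuinely delicate point is the sum-to-integral comparison, which must simultaneously use the boundedness of the $u_j$ (to control the telescoped integral and the variation of $1/x$) and the vanishing of their increments (to control the oscillation of $\tilde u_j$ on short intervals), while absorbing the ceilings and the non-integer scaling factors $c_j,r_j$ into $o(1)$ error terms; everything else is standard Abel/Ces\`aro bookkeeping.
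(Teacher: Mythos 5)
Your proof is correct and is essentially the paper's argument: both rest on the observation that the logarithmic average of $\sum_j(\tilde u_j(c_jx)-\tilde u_j(r_jx))$ telescopes under the substitution $y=c_jx$ (equivalently, a shift in $\log$-coordinates) into a difference of two uniformly bounded integrals, forcing the $\limsup$ of the integrand to be nonnegative, with the ceilings absorbed via the vanishing increments. The only difference is bookkeeping: the paper works entirely with the continuous integral $\int u_j(e^{x+\log c_j})\,dx$ and transfers to integers at the end, whereas you average the discrete sum $\sum_n \widetilde G(n)/n$ and insert an extra (routine) sum-to-integral comparison.
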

\begin{proof}
 For the convenience of reader, we give a proof by adapting the argument of Kenyon and Peres in \cite{KePe96}.

For $j=1,\ldots, p$, extend  $u_j$ in a piecewise linear fashion to a bounded continuous function on $[1,+\infty)$. Then for each $1\leq j\leq p$,
\begin{equation}\label{eq-KP}
\lim_{t\rightarrow +\infty} \sup\left\{ |u_j(x)-u_j(y)|:x,y\ge t,\; |x-y|\leq \max_{1\leq i\leq p}\max\{c_i,r_i, 1\}\right\}=0.
\end{equation}
 Take a positive number $M$ so that
 \begin{equation}
 \label{e-M}
  M>\max_{1\le j\le p}\{ |\log c_j|+|\log r_j|+1\}.
  \end{equation}
  Then for every $w>M$,
\begin{align*}
 & \Big|  \int_M^w  \sum_{j=1}^p \big( u_j(e^{x+\log c_j})-u_j(e^{x+\log r_j})\big)dx \Big| \\
 &\quad =\left| \sum_{j=1}^p \left[ \int_{M+\log c_j}^{w+\log c_j} u_j(e^x)dx - \int_{M+\log r_j}^{w+\log r_j} u_j(e^x)dx  \right]  \right|\\
 &\quad \leq \sum_{j=1}^p \left|  \int_{M+\log c_j}^{w+\log c_j} u_j(e^x)dx - \int_{M+\log r_j}^{w+\log r_j} u_j(e^x)dx    \right|\\
&\quad = \sum_{j=1}^p \Big|\int^{w+\log c_j}_{w+\log r_j} u_j(e^x) dx-\int^{M+\log c_j}_{M+\log r_j} u_j(e^x) dx\Big|,
\end{align*}
Since each $u_j$ is bounded, the sum in the right-hand side of the last `$=$' above is uniformly bounded. It follows that
$$\limsup_{x\rightarrow +\infty}\sum_{j=1}^p \big( u_j(e^{x+\log c_i})-u_j(e^{x+\log r_j})\big)\ge 0.$$
Setting $t=e^x$, one has
$$\limsup_{t\rightarrow +\infty}\sum_{j=1}^p(u_j( c_it)-u_j(r_j t ))\ge 0.$$
Combining the above inequality with \eqref{eq-KP}, we  have
\begin{align*}
\limsup_{n\rightarrow +\infty}& \sum_{j=1}^p(u_j(\lceil c_jn\rceil)-u_j(\lceil r_j n\rceil)\\
&=\limsup_{n\rightarrow +\infty}\sum_{j=1}^p(u_j(c_jn)-u_j(r_j n))
\\
&= \limsup_{t\rightarrow +\infty}\sum_{j=1}^p(u_j( c_j\lceil t\rceil )-u_j(r_j \lceil t \rceil ))\\
&= \limsup_{t\rightarrow +\infty}\sum_{j=1}^p(u_j( c_jt)-u_j(r_j t ))\ge 0,
\end{align*}
which completes the proof of the lemma.
\end{proof}

\begin{proof}[Proof of Theorem \ref{thm-1.1}: upper bound] Suppose that $P^{\ba}(T_1, f)>0$.
 Fix $0<s<s'<P^\ba (T_1,f)$. Let $\Phi=\{\log \phi_n\}_{n=1}^\infty$ be the additive potential generated by $f$, that is, $\phi_n(x)=\exp(S_nf(x))$ where $S_nf(x):=\sum_{i=0}^{n-1}f(T_1^ix)$.  Take $\epsilon_0>0$ such that
 \begin{equation}
 \label{star}
 \sup\{|f(x)-f(y)|:\; x,y\in X_1,\; d_1(x,y)\leq \epsilon_0\}<(s'-s)a_1/(1+a_1).
 \end{equation}
  By Lemma \ref{lem-Frost}, there exist $\nu\in \M(X_1)$, $\epsilon\in (0,\epsilon_0)$,  and $N\in \N$ such that
\begin{equation}
\label{e-6.1}
\begin{split}
\nu(B^\ba_n(x,\epsilon))&\leq  \sup_{y\in B^\ba_n(x,\epsilon)}\exp\left(-s'n + \frac{1}{a_1}S_{\lceil a_1n\rceil}f(y)\right)\\
&\leq
\exp\left(-sn + \frac{1}{a_1}S_{\lceil a_1n\rceil}f(x)\right)
\end{split}
\end{equation}
for any $n\geq N$ and $x\in X_1$, where in the last inequality we use \eqref{star}.

By continuity,  there exists $\tau\in (0,  \epsilon)$ such that for any $1\le i<j\le k$, if $x_i,y_i\in X_i$ satisfy $d_i(x_i,y_i)<\tau$,
then $$d_j(\pi_{j-1}\circ \cdots\circ \pi_i(x_i),\pi_{j-1}\circ \cdots\circ \pi_i(y_i))<\epsilon.$$  Take $M_0\in \mathbb{N}$ with
$\mathcal{P}_{X_i}(\tau,M_0)\neq \emptyset$ for $i=1,\ldots,k$, where $\mathcal{P}_{X_i}(\tau,M_0)$ is defined as in \eqref{e-2014-6}.    Now fix $M\in \N$ with $M\ge M_0$.
Let $\alpha_i\in \mathcal{P}_{X_i}(\tau,M)$ for $i=1,\ldots, k$. Set $\beta_i=\tau_{i-1}^{-1}\alpha_i$ and write for brevity that
$$t_0(n)=0, \quad t_i(n)=\lceil (a_1+\ldots +a_{i})n\rceil$$
for $n\in \N$ and $i=1,\ldots, k$. Then
for any $n\in \N$ and $x\in X_1$, we have
\begin{equation}
\label{e-relation}
\bigvee_{i=1}^k\bigvee_{j=t_{i-1}(n)}^{t_i(n)-1}T^{-j}_1\beta_i(x)\subseteq B^\ba_n(x,\epsilon).
\end{equation}
Now assume that $n\geq N$. By \eqref{e-6.1} and \eqref{e-relation},
\begin{equation}
\label{e-6.2}
\nu\Big(\bigvee_{i=1}^k\bigvee_{j=t_{i-1}(n)}^{t_i(n)-1}T^{-j}_1\beta_i(x)\Big)\leq  \exp\left(-sn + \frac{1}{a_1}S_{\lceil a_1n\rceil}f(x)\right)
\end{equation}
for any $x\in X_1$.
It follows that
\begin{align*}
H_{\nu}\Big(\bigvee_{i=1}^k\bigvee_{j=t_{i-1}(n)}^{t_i(n)-1}T^{-j}\beta_i\Big) &= -\int \log \nu\Big(\bigvee_{i=1}^k\bigvee_{j=t_{i-1}(n)}^{t_i(n)-1}T^{-j}_1\beta_i(x)\Big) d\nu(x)\\
& \geq sn  -\int \frac{1}{a_1}S_{\lceil a_1n\rceil}f(x) d\nu(x).
\end{align*}
Hence
\begin{equation}
\label{e-6.2}
\sum_{i=1}^k H_{\nu}\Big(\bigvee_{j=t_{i-1}(n)}^{t_i(n)-1}T^{-j}_1\beta_i\Big)\geq sn  -\int \frac{1}{a_1}S_{\lceil a_1n\rceil}f(x) d\nu(x).
\end{equation}

Now fix $\ell\in \N$. By Lemma \ref{lem-6.1},  the left-hand side of  \eqref{e-6.2} is bounded from above by
$$
\sum_{i=1}^k \frac{t_i(n)-t_{i-1}(n)}{\ell}
H_{w_{i,n}}\Big(\bigvee_{j=0}^{\ell-1}T^{-j}_1\beta_i\Big)+2k\ell\log M,
$$
where $$
w_{i,n}:=\frac{ \sum_{j=t_{i-1}(n)}^{t_i(n)-1} \nu\circ  T^{-j}_1} {t_{i}(n)-t_{i-1}(n)}.
$$
Hence by \eqref{e-6.2} and the definition of $H_\bullet(\tau, M;\ell)$ (cf. \eqref{e-HEML}), we have
\begin{equation}
\label{e-2014'}
\begin{aligned}
\sum_{i=1}^k & (t_i(n)-t_{i-1}(n))
H_{w_{i,n}\circ \tau_{i-1}^{-1}}(\tau, M; \ell)\\
& \geq  sn  -\frac{\lceil a_1n\rceil}{a_1}\int f d w_{1,n}-2k\ell\log M.
\end{aligned}
\end{equation}

Define $\nu_{m}=\frac{\sum_{j=0}^{m-1}\nu\circ T^{-j}_1}{m}$ for $m\in \mathbb{N}$. For $i=1,\ldots,k$, we have
$$
\nu_{m}\circ \tau^{-1}_{i-1}=\frac{\sum_{j=0}^{m-1}(\nu\circ \tau^{-1}_{i-1})\circ T^{-j}_i}{m}, \, w_{i,n}\circ \tau_{i-1}^{-1}=\frac{ \sum_{j=t_{i-1}(n)}^{t_i(n)-1} (\nu\circ \tau_{i-1}^{-1})\circ  T^{-j}_i} {t_{i}(n)-t_{i-1}(n)}
$$
 and
\begin{align}\label{e-2014-0}
\nu_{t_i(n)}\circ \tau^{-1}_{i-1}=\frac{t_{i-1}(n)}{t_i(n)}\nu_{t_{i-1}(n)}\circ \tau^{-1}_{i-1}+\frac{t_i(n)-t_{i-1}(n)}{t_i(n)}w_{i,n}\circ \tau_{i-1}^{-1}.
\end{align}

Applying  Lemma \ref{lem-en.1}(2) to the measure $\nu\circ \tau_{i-1}^{-1}$ (more precisely, in \eqref{e-inequality}, we replace the terms $T$, $\mu$, $n$, $m$
by $T_i$, $\nu\circ \tau_{i-1}^{-1}$, $t_{i-1}(n)$, $t_i(n)-t_{i-1}(n)$, respectively), we have
\begin{align*}
\frac{t_{i-1}(n)}{t_i(n)}  H_{\nu_{t_{i-1}(n)}\circ \tau_{i-1}^{-1}} & (\tau,M,\ell)+ \frac{t_i(n)-t_{i-1}(n)}{t_i(n)} H_{w_{i,n}\circ \tau_{i-1}^{-1}}(\tau, M; \ell)\\
&\le H_{\nu_{t_i(n)}\circ \tau_{i-1}^{-1}}(\tau, M;\ell)
+\frac{\log 2}{\ell}.
\end{align*}
That is,
\begin{align*}
t_i(n)H_{\nu_{t_i(n)}\circ \tau_{i-1}^{-1}} & (\tau, M;\ell)-t_{i-1}(n)H_{\nu_{t_{i-1}(n)}\circ \tau_{i-1}^{-1}}(\tau,M,\ell)\\
&\ge (t_i(n)-t_{i-1}(n))
H_{w_{i,n}\circ \tau_{i-1}^{-1}}(\tau, M; \ell)-\frac{t_i(n)\log 2}{\ell}.
\end{align*}
Combining the above inequality with \eqref{e-2014'}, we have
\begin{equation}
\label{e-2014}
\begin{aligned}
\Theta_n: = & \sum_{i=1}^k    \left(t_i(n)H_{\nu_{t_i(n)}\circ \tau_{i-1}^{-1}}(\tau, M;\ell)-t_{i-1}(n)H_{\nu_{t_{i-1}(n)}\circ \tau_{i-1}^{-1}}(\tau,M,\ell)\right)\\
\geq & sn  -\frac{t_1(n)}{a_1}\int f d \nu_{t_1(n)}-2k\ell\log M-\frac{k t_{k}(n)\log 2}{\ell}.
\end{aligned}
\end{equation}

Write $g_i(n):=H_{\nu_{n}\circ \tau_{i-1}^{-1}}(\tau, M;\ell)$. Then by Lemma \ref{lem-en.1}(1),
\begin{equation}
\label{e-2014-1}
|g_i(n)-g_i(n+1)|\leq \frac{1}{\ell(n+1)}\log \big( 3M^{2\ell}(n+1)\big).
\end{equation}
Set
$$\gamma(n):=\sum_{i=2}^k t_i(n) (g_i(t_i(n))-g_i(t_1(n))) -\sum_{i=2}^k t_{i-1}(n)(g_i(t_{i-1}(n))-g_i(t_1(n))).
$$
Then we have
$$
\Theta_n=\gamma(n)+ \sum_{i=1}^k (t_i(n)-t_{i-1}(n))g_i(t_1(n)),
$$
where $\Theta_n$ is defined as in \eqref{e-2014}. Hence by  \eqref{e-2014},
we have
\begin{equation}
\label{e-2014-2}
\begin{aligned}
 \sum_{i=1}^k & \frac{t_i(n)-t_{i-1}(n)}{n} g_i(t_1(n))+\frac{t_1(n)}{a_1n} \int f d \nu_{t_1(n)} \\
 & \geq -\frac{\gamma(n)}{n}+s-\frac{2k\ell\log M}{n}-\frac{k t_{k}(n)\log 2}{n\ell}.
\end{aligned}
\end{equation}

 Define
$$
w(n)=\sum_{i=2}^k (a_1+\cdots+a_{i-1})(g_i(t_{i-1}(n))-g_i(t_1(n)))-\sum_{i=2}^k (a_1+\cdots+ a_i) (g_i(t_i(n))-g_i(t_1(n))).
$$
Then we have $\limsup_{n\to \infty} w(n)\geq 0$ by applying Lemma \ref{lem-KP}, in which we take $p=2k-2$,
$$
u_j(n)=\left\{ \begin{array}{ll}
(a_1+\cdots+a_{j})g_{j+1}(n) & \mbox{ if } 1\leq j\leq k-1,\\
-(a_1+\cdots+a_{j-k+2})g_{j-k+2}(n) &  \mbox{ if }k \leq j\leq 2k-2,
\end{array}
\right.
$$
and
$$
c_j=\left\{ \begin{array}{ll}
a_j & \mbox{ if } 1\leq j\leq k-1,\\
a_{j-k+2} &  \mbox{ if }k \leq j\leq 2k-2,
\end{array}
\right.
$$
and $r_j=1$ for all $j$; the condition $\lim_{n\to \infty}|u_j(n+1)-u_j(n)|=0$ fulfils, thanks to \eqref{e-2014-1}.

Since  $g_i$'s are bounded functions, we have  $$\limsup_{n\to \infty}\frac{-\gamma(n)}{n}=\limsup_{n\to \infty}w(n)\geq 0.$$
Hence letting $n\to \infty$ in  \eqref{e-2014-2} and taking the upper limit, we obtain
\begin{equation}
\label{e-2014-3}
\limsup_{n\to \infty}\left( \sum_{i=1}^k a_i g_i(t_1(n))+ \int f dv_{t_1(n)}\right)  \geq  s-\frac{k (a_1+\cdots+a_k) \log 2}{\ell}.
\end{equation}
Take a subsequence $(n_j)$ of the natural numbers so that the left-hand side of \eqref{e-2014-3} equals
$$
\lim_{j\to \infty}\left( \sum_{i=1}^k a_i H_{\nu_{t_1(n_j)} \circ \tau_{i-1}^{-1}}(\tau, M;\ell) + \int f d\nu_{t_1(n_j)}\right)
$$
and moreover, $\nu_{t_1(n_j)}$ converges to an element $\lambda\in \M(X_1, T_1)$ in the weak* topology.
Since the map  $H_\bullet(\tau, M;\ell)$ is upper semi-continuous on $\M(X_1)$ (see Lemma \ref{usc}), we have
\begin{equation}
\label{e-2014-4}
\sum_{i=1}^k a_i H_{\lambda \circ \tau_{i-1}^{-1}}(\tau, M;\ell) + \int f d\lambda\geq s-\frac{k (a_1+\cdots+a_k) \log 2}{\ell}.
\end{equation}

Define $$\E:=\left\{(M,\ell,\delta): M,\ell\in \mathbb{N},\delta>0 \text{ with }M\ge M_0,  \ell \ge \frac{k (a_1+\cdots+a_k) \log 2}{\delta}\right\}$$
and $$\Omega_{M,\ell, \delta}:=\left\{\eta\in \M(X_1, T_1):  H^\ba_\eta(\tau,M;\ell)+\int f d\eta\geq  s-\delta\right\},$$
where $H^\ba_\eta(\tau,M;\ell):=\sum_{i=1}^k a_i H_{\eta \circ \tau_{i-1}^{-1}}(\tau, M;\ell)$.
Then by \eqref{e-2014-4}, $\Omega_{M,\ell, \delta}$ is a non-empty compact set whenever $(M,\ell,\delta)\in \E$.
However $$\Omega_{M_1,\ell_1, \delta_1}\cap \Omega_{M_2,\ell_2,\delta_2}\supseteq  \Omega_{M_1+M_2,\ell_1\ell_2, \min\{\delta_1,\delta_2\}}$$
for any $(M_1,\ell_1,\delta_1),(M_2,\ell_2,\delta_2)\in \E$.
It follows (by finite intersection property)  that
$$\bigcap_{(M,\ell,\delta)\in \E} \Omega_{M,\ell, \delta}\neq \emptyset.$$
Take $\mu_s\in \bigcap_{(M,\ell,\delta)\in \E} \Omega_{M,\ell, \delta}$.
Then
$$h^\ba_{\mu_s}(T_1,\tau)+\int f d\mu_s\geq  s,$$
where $h^\ba_{\mu_s}(T_1,\tau):=\sum_{i=1}^k a_i h_{\mu_s\circ \tau_{i-1}^{-1}}(T_i,\tau)$.
Since the map $\theta\in \mathcal{M}(X_1,T_1)\mapsto
h_\theta^{\ba}(T_1,\tau)$ is   upper
semi-continuous (see Lemma \ref{usc}), we can find $\mu\in \M(X_1, T_1)$ such that
$$h^\ba_{\mu}(T_1,\tau)+\int f d\mu \geq P^\ba_W(T_1, f,\epsilon) -\omega_\epsilon(f)$$
by letting $s\nearrow P^\ba_W(T_1, f,\epsilon)$. Since $h^\ba_{\mu}(T_1)\geq h^\ba_{\mu}(T_1,\tau)$, this completes the proof of the proposition.
\end{proof}
\section{Sub-additive case}
\label{s-6}
In this section, we extend Theorem \ref{thm-1.1} to sub-additive potentials,  under the following two additional assumptions: (1) $\htop(T_1)<\infty$ and (2) the entropy maps $\theta\in \mathcal{M}(X_i,T_i)\mapsto h_\theta(T_i)$, $i=1,2,\cdots,k$, are upper semi-continuous.

\begin{de}
Let $f:X_1\rightarrow [-\infty,+\infty)$ be an upper semicontinuous
function. Define $\Psi=\{\log \psi_n\}_{n=1}^\infty$ by
$\psi_n(x)=\exp(\sum_{j=0}^{n-1}f(T_1^jx))$. In this case, $\Psi$ is
additive. We just define
 $$P^{\bf a} (T_1,f):=P^{\bf a}
 (T_1,\Psi).$$
\end{de}

\begin{lem} \label{usc-est} Assume that $\htop(T_1)<\infty$ and the entropy maps $\theta\in \mathcal{M}(X_i,T_i)\mapsto h_\theta(T_i)$, $i=1,2,\cdots,k$, are upper semi-continuous. Let  $f:X_1\to
[-\infty,+\infty)$ be a upper semicontinuous function. Then there exists $\mu\in \mathcal{M}(X_1,T_1)$ such
that
$$ h_\mu^{\bf a}(T_1)+\int_{X_1} f d \mu \ge P^{\bf a} (T_1,f). $$
\end{lem}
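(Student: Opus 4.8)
The plan is to approximate the upper semicontinuous potential $f$ from above by continuous functions and pass to the limit, using the additive variational principle (Theorem \ref{thm-1.1}) together with the two standing hypotheses to control the weighted entropy. We may assume $f\not\equiv-\infty$, the contrary case being trivial since any $\mu\in\M(X_1,T_1)$ then gives $\int f\,d\mu+h^{\ba}_\mu(T_1)=-\infty=P^{\ba}(T_1,f)$.

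First I would record two preliminary facts. \emph{(a) Monotonicity of the weighted pressure in the potential.} If $g_1\le g_2$ are functions on $X_1$ (possibly taking the value $-\infty$), then $P^{\ba}(T_1,g_1)\le P^{\ba}(T_1,g_2)$; this is immediate from the definitions in Section~\ref{s-3.1}, since the associated additive potentials satisfy $\exp(S_ng_1)\le\exp(S_ng_2)$ pointwise (with the convention $\exp(-\infty)=0$), hence each $\Lambda^{\ba,s}_{\cdot,N,\epsilon}(X_1)$ is monotone in the potential and so is the critical exponent. \emph{(b) Upper semicontinuity and boundedness of the weighted entropy.} The map $\mu\in\M(X_1,T_1)\mapsto h^{\ba}_\mu(T_1)=\sum_{i=1}^{k}a_ih_{\mu\circ\tau_{i-1}^{-1}}(T_i)$ is upper semicontinuous and bounded. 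Indeed $\mu\mapsto\mu\circ\tau_{i-1}^{-1}$ is continuous from $\M(X_1,T_1)$ to $\M(X_i,T_i)$ (because $\tau_{i-1}$ is continuous and intertwines $T_1$ with $T_i$), so $\mu\mapsto h_{\mu\circ\tau_{i-1}^{-1}}(T_i)$ is upper semicontinuous by the standing assumption that the entropy map of $(X_i,T_i)$ is upper semicontinuous; a non-negative finite linear combination of upper semicontinuous functions is again upper semicontinuous, using that $\limsup$ of a sum is at most the sum of the $\limsup$s and $a_i\ge 0$. Boundedness follows from $h_{\mu\circ\tau_{i-1}^{-1}}(T_i)\le h_\mu(T_1)\le\htop(T_1)<\infty$, the first inequality because $(X_i,T_i,\mu\circ\tau_{i-1}^{-1})$ is a measure-theoretic factor of $(X_1,T_1,\mu)$. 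In particular, for $g\in C(X_1)$ the function $\mu\mapsto\int g\,d\mu+h^{\ba}_\mu(T_1)$ is upper semicontinuous on the compact space $\M(X_1,T_1)$, hence attains its supremum, which by Theorem \ref{thm-1.1} equals $P^{\ba}(T_1,g)$.

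Next, since $X_1$ is a compact metric space and $f$ is upper semicontinuous, $f$ is bounded above and there is a decreasing sequence $(g_m)_{m\ge1}\subset C(X_1)$ with $g_m\searrow f$ pointwise (for instance $g_m(x)=\sup_{y\in X_1}\bigl(f(y)-m\,d_1(x,y)\bigr)$, which is $m$-Lipschitz, decreasing in $m$, dominates $f$, and has pointwise infimum $f$ by upper semicontinuity). For each $m$ pick, using fact (b) and fact (a), a measure $\mu_m\in\M(X_1,T_1)$ with $\int g_m\,d\mu_m+h^{\ba}_{\mu_m}(T_1)=P^{\ba}(T_1,g_m)\ge P^{\ba}(T_1,f)$. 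By compactness of $\M(X_1,T_1)$, choose a subsequence $\mu_{m_j}\to\mu$ in the weak$^*$ topology, so $\mu\in\M(X_1,T_1)$. Fix $m$; then for all large $j$ we have $m_j\ge m$, hence $g_{m_j}\le g_m$ and therefore $\int g_m\,d\mu_{m_j}+h^{\ba}_{\mu_{m_j}}(T_1)\ge\int g_{m_j}\,d\mu_{m_j}+h^{\ba}_{\mu_{m_j}}(T_1)\ge P^{\ba}(T_1,f)$. Letting $j\to\infty$, using that $\int g_m\,d\mu_{m_j}\to\int g_m\,d\mu$ ($g_m$ being continuous) and the upper semicontinuity from (b), we obtain $\int g_m\,d\mu+h^{\ba}_\mu(T_1)\ge P^{\ba}(T_1,f)$. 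Finally, letting $m\to\infty$: since $g_m\searrow f$ with $g_1$ integrable with respect to $\mu$, the monotone convergence theorem gives $\int g_m\,d\mu\to\int f\,d\mu\in[-\infty,+\infty)$, whence $\int f\,d\mu+h^{\ba}_\mu(T_1)\ge P^{\ba}(T_1,f)$, as required.

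The argument is largely bookkeeping once Theorem \ref{thm-1.1} is available; the step that genuinely uses the hypotheses of the lemma, and which I expect to be the only delicate point, is fact (b): deriving upper semicontinuity together with \emph{finiteness} of $\mu\mapsto h^{\ba}_\mu(T_1)$ from $\htop(T_1)<\infty$ and the upper semicontinuity of each $h_{\cdot}(T_i)$. A secondary point requiring care throughout is that $f$ may take the value $-\infty$, which is handled by the conventions $\exp(-\infty)=0$ and by the $[-\infty,+\infty)$-valued form of monotone convergence.
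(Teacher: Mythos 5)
Your proof is correct and follows essentially the same route as the paper: approximate $f$ from above by continuous functions, apply Theorem \ref{thm-1.1} together with the boundedness and upper semicontinuity of $\mu\mapsto h^{\ba}_\mu(T_1)$ (which is exactly where the two standing hypotheses enter), and pass to a limit measure by compactness of $\M(X_1,T_1)$. The only differences are cosmetic: you use a countable decreasing sequence of Lipschitz majorants, sequential compactness, and monotone convergence, where the paper uses the nested closed sets $\mathcal{M}_g$ over all continuous $g\ge f$, the finite intersection property, and Lemma \ref{appro}.
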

\begin{proof}  For $g\in C(X_1)$ with $g\ge f$, we define
$$\mathcal{M}_g=\Big\{ \nu\in \mathcal{M}(X_1,T_1):h_\nu^{\bf
a}(T_1)+\int_{X_1} g d\nu \ge P^{\bf a}
(T_1,f)\Big\}.$$
Notice that, under the assumptions of the lemma,
 the entropy map $\nu\in \mathcal{M}(X_1,T_1)\mapsto h^{\ba}_\nu(T_1)$ is a bounded upper semi-continuous function. Hence by  Theorem \ref{thm-1.1},  there exists $\mu_g\in
\mathcal{M}(X_1,T_1)$ such that
$$h_{\mu_g}^{\bf a}(T_1)+\int_{X_1} g d \mu_g\ge  P^{\bf a}
(T_1,g) \ge P^{\bf a} (T_1,f).$$ Thus
$\mu_g\in \mathcal{M}_g$.  Since $\nu\in \mathcal{M}(X_1,T_1)\mapsto
\int_{X_1}g d\nu$ is a  bounded continuous non-negative valued
function on $\mathcal{M}(X_1,T_1)$, the mapping $\nu\in
\mathcal{M}(X_1,T_1)\mapsto h_\nu^{\bf a}(T_1)+\int_{X_1}g
d\nu$ is a bounded upper semicontinuous non-negative valued function
on $\mathcal{M}(X_1,T_1)$. Thus  $\mathcal{M}_g$ is a non-empty
closed subset of $\mathcal{M}(X_1,T_1)$.

Now put
$$\mathcal{M}_f:=\bigcap_{g\in C(X_1),g\ge f} \mathcal{M}_g.$$
Note that $\mathcal{M}_{g_1}\cap \mathcal{M}_{g_2}\supseteq
\mathcal{M}_{\min\{g_1,g_2\}}$ for any $g_1,g_2\in C(X_1)$ with
$g_1\ge f$, $g_2\ge f$, and each $\mathcal{M}_g$ is a non-empty
closed subset of the compact metric space $\mathcal{M}(X_1,T_1)$.
Hence $\mathcal{M}_f\neq \emptyset$, by the finite intersection property characterization of compactness. Take any $\mu\in
\mathcal{M}_f$. Then
$$h_{\mu}^{\bf a}(T_1)+\int_{X_1} g d \mu\ge  P^{\bf a} (T_1,f)$$
for any $g\in C(X_1)$ with $g\ge f$. Moreover, since $0\le
h_{\mu}^{\bf a}(T_1)<\infty$, we have
$$h_{\mu}^{\bf a}(T_1)+\inf \limits_{g\in C(X_1),g\ge f} \int_{X_1} g d \mu\ge  P^{\bf a} (T_1,f).$$
Finally by Lemma \ref{appro}, $\inf \limits_{g\in C(X_1),g\ge f}
\int_{X_1} g d \mu=\int_{X_1}f d \mu$ and thus
$$h_{\mu}^{\bf a}(T_1)+ \int_{X_1} f d \mu\ge  P^{\bf a} (T_1,f).$$
This completes  the proof of the lemma.
\end{proof}

\begin{lem}\label{estimate-1}
Let $\Phi=\{\log \phi_n\}_{n=1}^\infty$ be a sub-additive potential
on $X_1$. If for $\ell\in \mathbb{N}$ and $M\in
\mathbb{N}$, let $f_{\ell,M}(x)=\max\{\frac{1}{\ell}\log \phi_\ell(x),-M\}$ for
$x\in X_1$, then $f_{\ell,M}: X_1\rightarrow \mathbb{R}$ is a bounded
upper semi-continuous function and $$P^{\bf
a}(T_1,f_{\ell,M}) \ge P^{\bf
a}(T_1,\Phi).$$
\end{lem}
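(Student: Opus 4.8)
The statement has two parts: first, that each $f_{\ell,M}$ is a bounded upper semi-continuous function; second, that $P^{\bf a}(T_1, f_{\ell,M}) \ge P^{\bf a}(T_1,\Phi)$. The first part is routine: since $\phi_\ell$ is upper semi-continuous and nonnegative-valued (by the definition of a sub-additive potential), $\log \phi_\ell$ is upper semi-continuous with values in $[-\infty,+\infty)$; taking the maximum with the constant $-M$ makes it real-valued and bounded above by $\frac{1}{\ell}\log(\max_x \phi_\ell(x)) < \infty$ (using upper semi-continuity on the compact space $X_1$), hence $f_{\ell,M}: X_1 \to \mathbb{R}$ is bounded and upper semi-continuous. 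So the real content is the inequality.

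For the inequality, the plan is to compare the defining sums $\Lambda^{\bf a,s}_{\Psi,N,\epsilon}$ (for $\Psi$ the additive potential generated by $f_{\ell,M}$) with $\Lambda^{\bf a,s}_{\Phi,N,\epsilon}$, and show the critical value for the former dominates that for the latter. The key pointwise estimate to establish is that, up to a bounded additive error, $S_m f_{\ell,M}(x) \ge \frac{m}{\ell}\log\phi_\ell(\cdot)$ summed appropriately — more precisely, since $f_{\ell,M}(x) \ge \frac{1}{\ell}\log\phi_\ell(x)$, one has $S_m f_{\ell,M}(x) = \sum_{j=0}^{m-1} f_{\ell,M}(T_1^j x) \ge \frac{1}{\ell}\sum_{j=0}^{m-1}\log\phi_\ell(T_1^j x)$. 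Now by iterating the sub-additivity relation \eqref{e-1.0}, $\log\phi_m(x) \le \sum_{j=0}^{m-1}\log\phi_\ell(T_1^{?}x)$ grouped into blocks of length $\ell$; a standard averaging argument (grouping the $m$ iterates into $\ell$ families of blocks and averaging) gives $\frac{1}{\ell}\sum_{j=0}^{m-1}\log\phi_\ell(T_1^j x) \ge \log\phi_m(x) - C$ for a constant $C = C(\ell, \max_x|\log\phi_\ell|)$ independent of $m$ and $x$, valid for all $m$ large. Hence $S_m f_{\ell,M}(x) \ge \log\phi_m(x) - C$ for $m \ge$ some $m_0$.

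With this pointwise comparison in hand, I would plug it into the definition of $\Lambda^{\bf a, s}_{\Psi, N, \epsilon}$: for any cover $\Gamma = \{(n_j, A_j)\}$ appearing in the infimum for $\Phi$, the same $\Gamma$ is admissible for $\Psi$, and taking $m = \lceil a_1 n_j\rceil$ we get
$$
\exp\!\Big(-s n_j + \tfrac{1}{a_1}\sup_{A_j} S_{\lceil a_1 n_j\rceil}f_{\ell,M}\Big) \ge \exp\!\Big(-s n_j + \tfrac{1}{a_1}\sup_{A_j}\log\phi_{\lceil a_1 n_j\rceil} - \tfrac{C}{a_1}\Big) = e^{-C/a_1}\exp\!\Big(-s n_j + \tfrac{1}{a_1}\sup_{A_j}\log\phi_{\lceil a_1 n_j\rceil}\Big),
$$
so $\Lambda^{\bf a,s}_{\Psi, N, \epsilon}(X_1) \ge e^{-C/a_1}\,\Lambda^{\bf a,s}_{\Phi,N,\epsilon}(X_1)$ for $N \ge m_0$. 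Since the constant factor $e^{-C/a_1}$ does not affect whether the limiting quantity $\Lambda^{\bf a,s}_{\bullet,\epsilon}(X_1)$ is $0$ or $\infty$, the critical values satisfy $P^{\bf a}(T_1,\Psi, X_1, \epsilon) \ge P^{\bf a}(T_1, \Phi, X_1, \epsilon)$ for every small $\epsilon$, and letting $\epsilon \to 0$ gives $P^{\bf a}(T_1, f_{\ell,M}) = P^{\bf a}(T_1,\Psi) \ge P^{\bf a}(T_1, \Phi)$, as desired.

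The main obstacle is getting the block-averaging inequality $\frac{1}{\ell}\sum_{j=0}^{m-1}\log\phi_\ell(T_1^j x) \ge \log\phi_m(x) - C$ with a constant $C$ genuinely uniform in $x$ and $m$; one must handle the "boundary" blocks of length $<\ell$ at the ends of each of the $\ell$ groupings, which is where the uniform bound $\sup_x |\log\phi_\ell(x)| < \infty$ (from upper semi-continuity plus nonnegativity, giving a bound below only if $\phi_\ell > 0$ — but note $\phi_\ell$ may vanish, so $\log\phi_\ell$ may be $-\infty$). If $\phi_\ell$ can be zero this argument must be done more carefully: on the set where $\phi_m(x) = 0$ the inequality $S_m f_{\ell,M}(x) \ge \log\phi_m(x) - C = -\infty$ is automatic, so one only needs the estimate on $\{\phi_m > 0\}$, where one can still run the block decomposition since sub-additivity forces the relevant $\phi_\ell$-values along the orbit to be positive. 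Handling this case distinction cleanly, and confirming that the resulting additive constant truly is independent of the (arbitrarily large) block count, is the delicate part; everything else is a direct comparison of the Carathéodory-type set functions.
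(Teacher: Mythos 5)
Your proof is correct and follows essentially the same route as the paper's: both rest on the block-averaged sub-additivity estimate $\log\phi_m(x)\le C+S_mf_{\ell,M}(x)$ (the paper takes $C=2D+\ell M$ with $D=\sup_{x,\,0\le i<\ell}\log\phi_i(x)$, finite by upper semi-continuity and compactness, so only upper bounds on the boundary blocks are ever needed), followed by a term-by-term comparison of the covering sums, the multiplicative factor $e^{C/a_1}$ being harmless for the critical exponent. The only point to tighten is the one you already flag: the intermediate sum should run over $j\le m-\ell$ only (so that $\phi_\ell(T_1^jx)=0$ for such $j$ forces $\phi_m(x)=0$, and no $-\infty$ middle link can sit below a finite $\log\phi_m(x)$), the missing $\ell$ terms being absorbed into $C$ via $f_{\ell,M}\ge -M$, exactly as in the paper.
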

\begin{proof} Let $\ell\in \mathbb{N}$ and $M\in \mathbb{N}$.  Let
$f_{\ell,M}=\max\{ \frac{1}{\ell}\log \phi_\ell,-M\}$. It is clear that
$f_{\ell,M}: X_1\rightarrow \mathbb{R}$ is a bounded upper
semi-continuous function since $\frac{1}{\ell}\log
\phi_\ell:X_1\rightarrow [-\infty,+\infty)$ is  upper semi-continuous.

 Let $\phi_0(x)\equiv 1$ for $x\in X_1$ and
$$D:=D(\ell)=\sup \limits_{x\in X_1,\; i\in\{0,1,\cdots,\ell-1\}} \log
\phi_i(x).$$ Then $0\le D<\infty$. For $x\in X_1$ and $n\ge
2\ell$, we have
\begin{align*}
\log \phi_n(x)&\le \log
\phi_i(x)+\left(\sum_{j=0}^{[\frac{n-i}{\ell}]-1}\log
\phi_\ell(T_1^{j\ell+i}x)\right)+\log
\phi_{n-i-[\frac{n-i}{\ell}]\ell}\left(T_1^{i+[\frac{n-i}{\ell}]\ell}x\right)\\
&\le 2D+\sum_{j=0}^{[\frac{n-i}{\ell}]-1}\log \phi_\ell(T_1^{j\ell+i}x)
\end{align*}
for each $i\in\{0,1,\ldots,\ell-1\}$, using the sub-additivity of  $\Phi=\{\log
\phi_n\}_{n=1}^\infty$, where $[a]$
denotes the greatest integer $\leq a$. Summing  $i$ from $0$ to
$\ell-1$, we obtain
\begin{align*}
\log \phi_n(x) &\le
2D+\sum_{i=0}^{\ell-1}\sum_{j=0}^{[\frac{n-i}{\ell}]-1}\frac{1}{\ell}\log
\phi_\ell(T^{j\ell+i}x)=2D+\sum_{j=0}^{n-\ell} \frac{1}{\ell}\log \phi_\ell(T_1^jx)\\
&\le 2D+\sum_{j=0}^{n-\ell} f_{\ell,M}(T_1^jx)\le
C+\sum_{j=0}^{n-1}f_{\ell,M}(T_1^jx)
\end{align*}
where $C=2D+\ell M\in [0,+\infty)$.

Define $\Psi=\{\log \psi_n\}_{n=1}^\infty$ by
$\psi_n(x)=\exp\left(\sum_{j=0}^{n-1}f_{\ell,M}(T_1^jx)\right)$. Then
\begin{equation}\label{gg-eq-1}
\phi_n(x)\le e^C \psi_n(x), \qquad \forall\; x\in X_1,\; n\ge 2\ell,
\end{equation}
This implies that for any $\epsilon>0$, $s\in \mathbb{R}$ and $N\ge 2a_1\ell$,
$$\mathcal{M}^{{\bf a},s}_{\Phi, N,\epsilon}(X_1)\le  e^{\frac{C}{a_1}}\cdot \mathcal{M}^{{\bf a}, s}_{\Psi, N,
\epsilon}(X_1).$$ Hence $\mathcal{M}^{{\bf a},s}_{\Phi,
\epsilon}(X_1)\le e^{\frac{C}{a_1}} \mathcal{M}^{{\bf a}, s}_{\Psi,
\epsilon}(X_1)$ for $\epsilon>0$, $s\in \mathbb{R}$. It follows that  $$P^{\bf
a}(T_1,\Phi,X_1,\epsilon)\le P^{\bf
a}(T_1,\Psi,X_1,\epsilon)=P^{\bf a}(T_1,f_{\ell,M}, X_1,\epsilon) .$$
Letting  $\epsilon \to 0$, we are done.
\end{proof}

\begin{thm} Assume that $\htop(T_1)<\infty$ and the entropy maps $\theta\in \mathcal{M}(X_i,T_i)\mapsto h_\theta(T_i),$
 $i=1,2,\cdots,k$, are upper semi-continuous. Let  $\Phi=\{\log \phi_n\}_{n=1}^\infty$ be a sub-additive potential on
$X_1$. Then
$$ P^{\bf a}(T_1,\Phi)=\sup\{ h_\mu^{\bf a}(T_1)+\Phi_*(\mu): \mu\in \mathcal{M}(X_1,T_1)\},$$
and moreover the supremum is attainable.
\end{thm}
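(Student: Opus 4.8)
The plan is to obtain the identity, together with attainability, by combining Proposition~\ref{thm-3.3} --- which already gives $P^\ba(T_1,\Phi)\ge\sup\{\Phi_*(\mu)+h^\ba_\mu(T_1):\mu\in\M(X_1,T_1)\}$ --- with Lemmas~\ref{usc-est} and~\ref{estimate-1}, which together will produce a single $T_1$-invariant measure $\mu$ with $h^\ba_\mu(T_1)+\Phi_*(\mu)\ge P^\ba(T_1,\Phi)$. Throughout I shall use that, under the standing assumptions, $\nu\mapsto h^\ba_\nu(T_1)$ is a \emph{bounded} upper semi-continuous function on the compact metric space $\M(X_1,T_1)$ (this is observed in the proof of Lemma~\ref{usc-est}, the boundedness coming from $\htop(T_1)<\infty$), and that $\nu\mapsto\int g\,d\nu$ is upper semi-continuous on $\M(X_1,T_1)$ for every bounded upper semi-continuous $g$.

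For $p,M\in\N$, let $f_{p,M}(x)=\max\{\frac{1}{p}\log\phi_p(x),-M\}$ as in Lemma~\ref{estimate-1}; this is a bounded upper semi-continuous function, and $P^\ba(T_1,f_{p,M})\ge P^\ba(T_1,\Phi)$. By Lemma~\ref{usc-est} the set
\[
\mathcal C_{p,M}:=\Big\{\nu\in\M(X_1,T_1):\ h^\ba_\nu(T_1)+\int f_{p,M}\,d\nu\ \ge\ P^\ba(T_1,\Phi)\Big\}
\]
is non-empty, and it is closed since the functional defining it is upper semi-continuous. Because $f_{p,M}$ decreases pointwise as $M$ increases, for fixed $p$ the sets $\mathcal C_{p,M}$ are nested and decreasing in $M$, hence $\bigcap_{M\in\N}\mathcal C_{p,M}\ne\emptyset$. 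Choose $\mu_p$ in this intersection; letting $M\to\infty$ and invoking the monotone convergence theorem (note $f_{p,M}\searrow\frac{1}{p}\log\phi_p$ and $f_{p,1}$ is integrable) gives
\[
h^\ba_{\mu_p}(T_1)+\frac{1}{p}\int\log\phi_p\,d\mu_p\ \ge\ P^\ba(T_1,\Phi)\qquad\text{for every }p\in\N.
\]

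Now I would pass to a limit in $p$. Pick a subsequence $(p_j)$ with $\mu_{p_j}\to\mu$ in the weak$^*$ topology, $\mu\in\M(X_1,T_1)$, and fix $p,M\in\N$. The estimate established inside the proof of Lemma~\ref{estimate-1} supplies a constant $C=C(p,M)$ with $\log\phi_n(x)\le C+\sum_{i=0}^{n-1}f_{p,M}(T_1^ix)$ for all $x\in X_1$ and all $n\ge 2p$; integrating against the invariant measure $\mu_{p_j}$ yields $\frac{1}{p_j}\int\log\phi_{p_j}\,d\mu_{p_j}\le\frac{C}{p_j}+\int f_{p,M}\,d\mu_{p_j}$ once $p_j\ge 2p$. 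Feeding this into the last display and taking $\limsup_{j\to\infty}$, while using the upper semi-continuity (and boundedness) of $\nu\mapsto h^\ba_\nu(T_1)$ and of $\nu\mapsto\int f_{p,M}\,d\nu$, I obtain $P^\ba(T_1,\Phi)\le h^\ba_\mu(T_1)+\int f_{p,M}\,d\mu$. Letting $M\to\infty$ (monotone convergence again) gives $P^\ba(T_1,\Phi)\le h^\ba_\mu(T_1)+\frac{1}{p}\int\log\phi_p\,d\mu$, and then letting $p\to\infty$ gives $P^\ba(T_1,\Phi)\le h^\ba_\mu(T_1)+\Phi_*(\mu)$ by the definition~\eqref{e-1.2} of $\Phi_*$. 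Combined with Proposition~\ref{thm-3.3}, this yields equality and shows the supremum is attained at $\mu$; the only exceptional case, $P^\ba(T_1,\Phi)=-\infty$, can by Proposition~\ref{thm-3.3} occur only when $\Phi_*\equiv-\infty$ on $\M(X_1,T_1)$, and then the asserted identity is trivial.

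The semi-continuity bookkeeping and the monotone-convergence steps are routine, as is the compactness argument producing $\mu_p$. The point I expect to be delicate is the iterated limit: $M\to\infty$ and $p\to\infty$ cannot be taken simultaneously, and the a priori unbounded quantity $\frac{1}{p_j}\int\log\phi_{p_j}\,d\mu_{p_j}$ must first be channelled through the bounded truncations $f_{p,M}$ before weak$^*$ compactness of $\M(X_1,T_1)$ and upper semi-continuity can be applied --- this is exactly where the sub-additivity inequality from Lemma~\ref{estimate-1} and the hypothesis $\htop(T_1)<\infty$ (which makes $h^\ba_\bullet(T_1)$ bounded) are used. A small amount of extra care is also needed to dispose of the degenerate case $\Phi_*\equiv-\infty$.
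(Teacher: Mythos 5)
Your argument is correct, and its first half coincides with the paper's: you build the same truncations $f_{p,M}=\max\{\frac1p\log\phi_p,-M\}$, invoke Lemma~\ref{usc-est} and Lemma~\ref{estimate-1} to make the closed sets $\mathcal C_{p,M}$ non-empty, and intersect over $M$ to obtain, for each $p$, a measure $\mu_p$ with $h^\ba_{\mu_p}(T_1)+\frac1p\int\log\phi_p\,d\mu_p\ge P^\ba(T_1,\Phi)$. Where you diverge is in how a \emph{single} measure is extracted from the family $\{\mu_p\}$. The paper stays entirely within the finite-intersection-property framework: it observes that $\int f_{n_1n_2}\,d\nu\le\min\{\int f_{n_1}\,d\nu,\int f_{n_2}\,d\nu\}$ (sub-additivity plus invariance), so the closed sets $\mathcal M_n=\{\nu:h^\ba_\nu(T_1)+\int f_n\,d\nu\ge P^\ba(T_1,\Phi)\}$ satisfy $\mathcal M_{n_1}\cap\mathcal M_{n_2}\supseteq\mathcal M_{n_1n_2}$, whence $\bigcap_n\mathcal M_n\ne\emptyset$ and any point of the intersection works. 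You instead take a weak$^*$ limit $\mu$ of a subsequence $\mu_{p_j}$ and transfer the inequality to $\mu$ by re-using the pointwise Birkhoff-sum estimate $\log\phi_n\le C(p,M)+\sum_{i<n}f_{p,M}\circ T_1^i$ from the proof of Lemma~\ref{estimate-1}, followed by the iterated limits $j\to\infty$, $M\to\infty$, $p\to\infty$. Both routes are sound and rest on the same semi-continuity and boundedness facts; the paper's version is slightly cleaner because the multiplicative nesting of the $\mathcal M_n$ replaces your three-fold limit and avoids a second appeal to the internal estimate of Lemma~\ref{estimate-1}, while your version makes explicit where sub-additivity enters quantitatively and handles the degenerate case $P^\ba(T_1,\Phi)=-\infty$, which the paper passes over in silence. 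One bookkeeping point to keep in mind in your route: in the non-degenerate case the quantities $\frac{1}{p_j}\int\log\phi_{p_j}\,d\mu_{p_j}$ are finite (otherwise $P^\ba(T_1,\Phi)=-\infty$), so the rearrangement feeding the comparison inequality into the display is legitimate.
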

\begin{proof} By Proposition \ref{thm-3.3}, it is sufficient to show that there exists $\mu\in \mathcal{M}(X_1,T_1)$
such that $P^{\bf a}(T_1,\Phi)\le h_\mu^{\bf a}(T_1)+\Phi_*(\mu)$.

For $n,M\in \mathbb{N}$, let $f_n(x)=\frac{1}{n}\log \phi_n(x)$ and $f_{n,M}(x)=\max\{  \frac{1}{n}\log \phi_n(x),-M\}$ for $x\in
X_1$. Then  $f_{n,M}$ is a bounded upper semi-continuous function.
Define $$\mathcal{M}_{n,M}=\left\{ \nu\in \mathcal{M}(X_1,T_1):
h_\nu^{\bf a}(T_1)+\int_{X_1}f_{n,M} \, d \nu\ge P^{\bf
a}(T_1,\Phi)\right\}.$$
By Lemma \ref{usc-est}, there exists
$\mu_{n,M}\in \mathcal{M}(X_1,T_1)$ such that
$$h_{\mu_{n,M}}^{\bf a}(T_1)+\int_{X_1}f_{n,M} \, d \mu_{n,M}\ge P^{\bf
a}(T_1,f_{n,M}) \ge P^{\bf
a}(T_1,\Phi),$$ where the last inequality comes from Lemma
\ref{estimate-1}. Thus $\mu_{n,M}\in \mathcal{M}_{n,M}$. By the assumption, we know that the function $h_{\bullet}^{\bf
a}(T_1)$ is  bounded, upper semi-continuous and  non-negative
 on $\mathcal{M}(X_1,T_1)$. Notice that $\nu\in
\mathcal{M}(X_1,T_1)\mapsto \int_{X_1}f_{n,M} d\nu$ is also  an upper
semi-continuous function from $\mathcal{M}(X_1,T_1)$ to
$\mathbb{R}$. Hence $\nu\in \mathcal{M}(X_1,T_1)\mapsto
h_\nu^{\bf a}(T_1)+\int_{X_1}f_{n,M} d\nu$ is  upper
semi-continuous.    Thus  $\mathcal{M}_{n,M}$ is a non-empty closed
subset of $\mathcal{M}(X_1,T_1)$. Moreover since
$\mathcal{M}_{n,1}\supseteq \mathcal{M}_{n,2}\supseteq \cdots$ and
$\inf_{M\in \mathbb{N}}\int_{X_1}f_{n,M}d\nu=\int_{X_1} f_n d\nu$
for any $\nu\in \mathcal{M}(X_1,T_1)$, one has
$\mathcal{M}_n=\bigcap_{M\in \mathbb{N}}\mathcal{M}_{n,M}$ is a
non-empty closed subset of $\mathcal{M}(X_1,T_1)$.

Now put
$$\mathcal{M}_{\Phi}:=\bigcap_{n\in \mathbb{N}} \mathcal{M}_n.$$
Since $\int_{X_1} f_{n_1n_2} d \nu \le \min\{ \int_{X_1} f_{n_1} d
\nu, \int_{X_1} f_{n_2} d \nu\}$ for $\nu\in \mathcal{M}(X_1,T_1)$,
we have  $\mathcal{M}_{n_1}\cap \mathcal{M}_{n_2}\supseteq
\mathcal{M}_{n_1n_2}$  for any $n_1,n_2\in \mathbb{N}$.  Moreover
since each $\mathcal{M}_n$ is a non-empty closed subset of the
compact metric space $\mathcal{M}(X_1,T_1)$, one has
$\mathcal{M}_{\Phi}\neq \emptyset$ by the finite intersection property characterization of compactness. Take any $\mu\in
\mathcal{M}_{\Phi}$. Then
$$h_{\mu}^{\bf a}(T_1)+\int_{X_1} f_n d \mu\ge  P^{\bf a} (T_1,\Phi)$$
for any $n\in \mathbb{N}$. Moreover, since $0\le h_{\mu}^{\bf
a}(T_1)<\infty$, we have
$$h_{\mu}^{\bf a}(T_1)+\inf \limits_{n\in \mathbb{N}} \frac{1}{n}\int_{X_1} \log \phi_n d \mu\ge  P^{\bf a} (T_1,\Phi).$$
Finally since $\inf \limits_{n\in \mathbb{N}} \frac{1}{n}\int_{X_1}
\log \phi_n d \mu=\Phi_*(\mu)$ and thus
$$h_{\mu}^{\bf a}(T_1)+ \Phi_*(\mu)\ge  P^{\bf a} (T_1,\Phi).$$
This finishes the proof of the Theorem.
\end{proof}


\bigskip

\section{Final remarks and examples}
In this section we give some remarks, examples and questions.
\subsection{}
 \label{s-7.1}
 In \cite{BaFe12, Fen11}, Barral and the first author  defined  weighted topological pressure for factor maps between subshifts in a different way, motivated from the study of  multifractal analysis on affine Sierpinski gaskets \cite{BaMe07, BaMe08, Kin95, Ols98}  and a question of Gatzouras and  Peres \cite{GaPe96} on the uniqueness of invariant measures of full dimension on certain affine invariant sets.  The approach is based on the following lemma, which is derived from the relativized variational principle of Ledrappier and Walters \cite{LeWa77} and its sub-additive extension \cite{ZhCa08}.

\begin{lem}
\label{lem-BF}\cite{BaFe12, Fen11}
Assume that  $(X, T)$ and $(Y, S)$ are subshifts over finite alphabets and $\pi:X\to Y$ is a factor map. Let $f\in C(X)$ (or more general, a subadditive potential on $X$). Then there
exists a sub-additive potential $\Phi_f=(\log \phi_n)_{n=1}^\infty$ on $Y$ such that for any $\nu\in \M(Y,S)$,
$$\sup_{\mu\in \M(X,T),\; \mu\circ \pi^{-1}=\nu} \left(\int f d\mu+h_\mu(T)-h_\nu(S)\right)=\Phi_*(\nu):=\lim_{n\rightarrow +\infty}
\frac{1}{n} \int \log \phi_n d \nu.$$
\end{lem}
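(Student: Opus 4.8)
The plan is to exhibit $\Phi_f$ explicitly and then deduce the identity from the relativized variational principles quoted in the statement. After replacing $X$ and $Y$ by higher-block presentations I may assume $\pi$ is a one-block factor code, induced coordinatewise by a map of alphabets. For $y\in Y$ and $n\in\N$ let $W_n(y)$ be the set of admissible $n$-words $I$ of $X$ whose cylinder $[I]$ meets the fibre $\pi^{-1}(y)$, and put
$$
\phi_n(y)\ :=\ \sum_{I\in W_n(y)}\exp\Big(\max_{x\in[I]\cap\pi^{-1}(y)}S_nf(x)\Big),
$$
with $e^{S_nf(x)}$ replaced by $\phi_n^X(x)$ in the sub-additive case, where $\Phi^X=(\log\phi_n^X)$ is the given potential on $X$. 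I would check that each $\phi_n$ is strictly positive (since $\pi$ is onto) and upper semi-continuous on $Y$: upper hemicontinuity of $y\mapsto\pi^{-1}(y)$ together with finiteness of the alphabet forces $W_n(y')\subseteq W_n(y)$ for $y'$ near $y$, while $y'\mapsto\max_{[I]\cap\pi^{-1}(y')}S_nf$ is upper semi-continuous by compactness. Sub-additivity $\phi_{n+m}(y)\le\phi_n(y)\phi_m(S^ny)$ is a concatenation estimate: each $I\in W_{n+m}(y)$ factors as $I'I''$ with $I'\in W_n(y)$, $I''\in W_m(S^ny)$, and one uses $S_{n+m}f=S_nf+S_mf\circ T^n$ (resp. sub-additivity of $\Phi^X$). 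Hence $\Phi_f=(\log\phi_n)_{n\ge1}$ is a sub-additive potential on $Y$ in the sense of Section~\ref{s-3.1}.

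Next I would identify, for each $y\in Y$, the quantity $\limsup_n\frac1n\log\phi_n(y)$ with the fibre pressure $P(T,f,\pi^{-1}(y))$ entering the relativized variational principle of Ledrappier and Walters \cite{LeWa77} (resp., with $\Phi^X$, the sub-additive fibre pressure of \cite{ZhCa08}). For subshifts this is immediate cylinder counting: taking $\epsilon>0$ so small that $(n,\epsilon)$-balls are exactly $n$-cylinders, a maximal $(n,\epsilon)$-separated subset of $\pi^{-1}(y)$ selects one point from each cylinder in $W_n(y)$, and optimizing the weights $e^{S_nf}$ over such sets yields precisely $\phi_n(y)$, so that letting $\epsilon\to0$ is vacuous.

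With this in hand, the relativized variational principle \cite{LeWa77} (resp. \cite{ZhCa08}) gives
$$
\sup_{\substack{\mu\in\M(X,T)\\ \mu\circ\pi^{-1}=\nu}}\Big(\int f\,d\mu+h_\mu(T)-h_\nu(S)\Big)\ =\ \int_Y P(T,f,\pi^{-1}(y))\,d\nu(y)\ =\ \int_Y\limsup_{n\to\infty}\frac1n\log\phi_n(y)\,d\nu(y),
$$
and it remains to equate the last integral with $\Phi_*(\nu)=\lim_n\frac1n\int\log\phi_n\,d\nu$. The inequality ``$\ge$'' is reverse Fatou's lemma: the functions $\frac1n\log\phi_n$ are bounded above uniformly (by $\log\#\mathcal A_X+\|f\|_\infty$, resp. a constant from $\Phi^X$), so $\int\limsup_n\frac1n\log\phi_n\,d\nu\ge\limsup_n\int\frac1n\log\phi_n\,d\nu=\lim_n\frac1n\int\log\phi_n\,d\nu=\Phi_*(\nu)$, using that $\big(\int\log\phi_n\,d\nu\big)_n$ is sub-additive by $S$-invariance of $\nu$. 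For ``$\le$'' I would first take $\nu$ ergodic, apply Kingman's sub-additive ergodic theorem to get $\frac1n\log\phi_n(y)\to\Phi_*(\nu)$ for $\nu$-a.e. $y$, whence $\int\limsup_n\frac1n\log\phi_n\,d\nu=\Phi_*(\nu)$; the general case follows by ergodic decomposition of $\nu$ and affinity of $\nu\mapsto\Phi_*(\nu)$, exactly as in the proof of Proposition~\ref{thm-3.3}. (If $f$ is only a sub-additive potential one must also allow $\Phi_*(\nu)=-\infty$, where both sides equal $-\infty$; for $f\in C(X)$ this does not occur.) Alternatively, $\mathrm{LHS}\le\Phi_*(\nu)$ can be obtained without Kingman, by combining the Abramov--Rokhlin formula $h_\mu(T)=h_\nu(S)+h_\mu(T\mid\pi^{-1}\mathcal B_Y)$ (with $\mathcal B_Y$ the Borel $\sigma$-algebra of $Y$) with the disintegration of $\mu$ over $\pi$ and the Gibbs inequality $\sum_I p_I(c_I-\log p_I)\le\log\sum_I e^{c_I}$ applied fibrewise, with $c_I\le\max_{[I]\cap\pi^{-1}(y)}S_nf$.

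The bulk of the work --- producing an invariant measure over $\nu$ nearly attaining the fibre pressure --- is imported wholesale from \cite{LeWa77,ZhCa08}, so on our side the delicate points are modest: (a) pinning down the precise semicontinuity of the $\phi_n$ so that $\Phi_f$ qualifies as a sub-additive potential in the sense used in this paper; (b) the interchange of $\limsup_n$ with $\int_Y$, which is exactly why the ergodicity reduction and Kingman's theorem (or the Abramov--Rokhlin alternative) are needed; and (c) a bookkeeping check that the fibre pressure identified above is literally the object appearing in \cite{LeWa77,ZhCa08} (separated versus spanning sets, $\limsup$ versus $\liminf$), which is harmless for subshifts. I expect (b) to be the only step requiring genuine care.
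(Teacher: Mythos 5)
Your proposal is correct, and it is essentially the route the paper itself points to: Lemma \ref{lem-BF} is not proved in this paper but cited from \cite{BaFe12, Fen11}, where it is obtained exactly as you do --- by recoding to a one-block factor map, defining $\phi_n(y)$ as a sum over preimage words of the suprema of $e^{S_nf}$ (so that $\frac1n\log\phi_n(y)$ computes the Ledrappier--Walters fibre pressure $P(T,f,\pi^{-1}(y))$), and then invoking the relativized variational principle of \cite{LeWa77} and its sub-additive extension \cite{ZhCa08} together with Kingman's theorem to pass from $\int_Y\limsup_n\frac1n\log\phi_n\,d\nu$ to $\lim_n\frac1n\int\log\phi_n\,d\nu$. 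The only cosmetic difference is that the cited construction sums over all words $I$ with $\pi(I)=y_1\cdots y_n$ and takes $\sup_{[I]}$, yielding a locally constant (hence continuous) $\phi_n$, whereas your fibre-restricted version is merely upper semi-continuous; both qualify as sub-additive potentials and both match the fibre pressure, so this changes nothing.
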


According to above lemma,  for given $a_1, a_2>0$, one has
\begin{align*}
&\sup_{\mu\in \M(X,T), \; \mu\circ \pi^{-1}=\nu}\left(\int f d\mu+a_1h_\mu(T)+a_2h_\nu(S)\right)\\
&\qquad\mbox{}=\sup_{\nu\in \M(Y,S)} \left\{ (a_1+a_2)h_\nu(S)+\sup_{\mu\in \pi^{-1}\nu} a_1\left(\int \frac{1}{a_1}f d\mu+h_\mu(T)-h_\nu(S)\right)\right\}\\
&\qquad\mbox{}=\sup_{\nu\in \M(Y,S)} \{(a_1+a_2)h_{\nu}(S)+(\Phi_{a_1^{-1}f})_*(\nu)\}\\
&\qquad\mbox{}=(a_1+a_2) P\left(S, \frac{a_1}{a_1+a_2}\Phi_{a_1^{-1}f}\right).
\end{align*}
where the last equality follows from the sub-additive thermodynamic formalism (see e.g. \cite{CFH08}). Hence in \cite{BaFe12, Fen11},  $P^{(a_1,a_2)}(T,f)$ was defined in terms of sub-additive topological pressure in the subshift case.

However, Lemma \ref{lem-BF} does not extend to factor maps between general topological dynamical systems. Below we will give a counter example. Hence the approach in \cite{BaFe12, Fen11} in defining weighted topological pressure does not extend to general topological dynamical systems.

\begin{ex} \label{ex-1} Let $X=\{ (x,y,z)\in \mathbb{R}^3: -1\le x \le 1, y^2+z^2=x^2\}$ be a cone surface. Define $T:X\rightarrow X$ by
$$T((x,x\cos \theta,x\sin \theta))=(x,x\cos(2\theta), x\sin(2\theta)), \quad x\in [-1,1].$$
Let $Y=[-1,1]$ and $S:Y\rightarrow Y$ be the identity.
Set $\pi: X\rightarrow Y$ by $\pi((x,y,z))= x$.
Then $(Y, S)$ is a factor of $(X, T)$ associated with the factor map $\pi$. Take $f\in C(X)$ with  $f\equiv 0$. Suppose that Lemma \ref{lem-BF} extends to this case, that is, there exists a sub-additive potential $\Phi$ on $Y$ such that
for any $\nu\in \M(Y,S)$,
\begin{equation}\label{e-BF}
\sup_{\mu\in \pi^{-1}\nu} (h_\mu(T)-h_\nu(S))=\Phi_*(\nu).
\end{equation}
In what follows we derive a contradiction.

We first claim that the mapping \begin{equation}
\label{e-map}\nu\in \M(Y,S)\mapsto \sup_{\mu\in \pi^{-1}\nu} (h_\mu(T)-h_\nu(S))
\end{equation}
is not upper semi-continuous. To see this, for $t\in Y$, let $\nu_t=\delta_t$ (the Dirac measure at $t$). Clearly $\delta_t\in \M(Y, S)$ and when $t\rightarrow 0$, $\delta_t\rightarrow\delta_0$ in the weak-star topology.
However one can check that
$$\sup_{\mu\in \pi^{-1}\delta_t} \big( h_\mu(T)-h_{\nu_t}(S) \big)=\begin{cases} \log 2, &\text{ for } t\neq 0\\
0, &\text{ if } t=0 \end{cases}.$$
Hence the mapping in \eqref{e-map} is not upper semi-continuous. Therefore by \eqref{e-BF}, $\nu\mapsto \Phi_*(\nu)$ is not upper semi-continuous on $\M(Y,S)$. But this contradicts the fact that $\nu\mapsto \Phi_*(\nu)$ is always upper semi-continuous (see e.g. \cite[Proposition A.1.(2)]{FeHu10}).

\end{ex}

\subsection{}
\label{s-7.2}
Using Corollary \ref{cor-entropy},  we can extend Kenyon-Peres' variational principle \eqref{e-var-H} and its higher dimensional version to a particular class of skew product expanding maps on the $k$-torus $\T^k:=\R^k/\Z^k$ ($k\geq 2$).

 To see this, let $2\leq m_1\leq m_2\leq \ldots\leq m_k$ be integers. For $i=1, \ldots, k-1$, let $\phi_i$ be $C^1$ real-valued functions  on $\T^{i}$. Define $T_1: \T^k\to \T^k$ by
 $$
 T_1((x_1,\ldots,x_k))=(m_1x_1, m_2x_2+\phi_1(x_1), \ldots, m_kx_k+\phi_{k-1}(x_1,\ldots, x_{k-1})).
 $$
  This transformation can be viewed as a skew product of the maps $$x_i\mapsto m_ix_i,\quad  (i=1,\ldots,k).
 $$

Let $K\subset \T^k$ be a $T_1$-invariant compact set.  Let  $\tau_i$ $(i=1,\ldots, k-1$) be the canonical projection from $\T^{k}$ to $\T^{k-i}$, i.e. $$
\tau_i(x_1,\ldots, x_k)=(x_1,\ldots, x_{k-i}).
$$
Set
$X_1= K$ and $X_i=\tau_{i-1}(K)$ for $2\leq i\leq k$.
Define $T_i: X_i\to X_i$ ($i=2,\ldots, k$) by
  $$
 T_i((x_1,\ldots,x_i))=(m_1x_1, m_2x_2+\phi_1(x_1), \ldots, m_ix_i+\phi_{i-1}(x_1,\ldots, x_{i-1})).
 $$
 Then  $(X_{i+1}, T_{i+1})$ is the factor of $(X_i, T_i)$ associated with the factor map  $\pi_i: X_i\to X_{i+1}$, which is defined by $$(x_1,\ldots, x_{k+1-i})\mapsto (x_1,\ldots, x_{k-i}).$$
Define $\ba=(a_1,\ldots, a_k)$ with
$$a_1=\frac{1}{\log m_k},\quad a_i=\frac{1}{\log m_{k+1-i}}-\frac{1}{\log m_{k+2-i}} \quad \mbox{ for }i=2,\ldots, k.$$

It is direct to check that there exist two constants $C_1, C_2>0$ (depending on $\phi_i$'s) such that for any $\epsilon>0$ and $x\in \T^k$,
\begin{equation}
\label{e-equiv}
C_2B_{e^{-n}\epsilon}(x)\subset B_n^{\ba}(x, \epsilon)\subset C_1 B_{e^{-n}\epsilon}(x).
\end{equation}
Hence from the definition of $\htop^{\ba}(\cdot)$, we see that $\htop^{\ba}(T_1, K)=\dim_H K$.  Applying Corollary \ref{cor-entropy}, we have
\begin{equation}
\label{e-var''}
\dim_HK=\htop^{\ba}(T_1, K)=\sup_{\mu\in \M(X_1, T_1)} h^\ba_\mu(T_1),
\end{equation}
where the supremum is attainable at some ergodic $\mu\in \M(X_1, T_1)$. Moreover by \eqref{e-equiv} and Theorem \ref{thm-4.1'}, we have
$\dim_H\mu=h^\ba_\mu(T_1)$ for each ergodic $\mu\in \M(X_1, T_1)$.  Hence there exists an ergodic $\mu\in \M(X_1, T_1)$ of full Hausdorff dimension, i.e. \begin{equation}
\label{e-end}
\dim_H\mu=\dim_HK.
\end{equation}
This extends the work of Kenyon and Peres \cite{KePe96}.  We remark that \eqref{e-end} was also proved by Luzia \cite{Luz06} for a more general class of skew product expanding maps on $\T^2$.

\subsection{}In \cite{FeHu12}, the authors proved a variational principle for topological entropies for arbitrary Borel subsets. We remark that this principle also holds for weighted topological entropies, by applying Lemma \ref{pro-3.1} and following the arguments in \cite{FeHu12}.

 In the end we pose several questions about  possible extensions of Theorem \ref{thm-1.1}:  does this result remain valid for $\Z^d$-actions? and moreover does it admit a relativized or randomized version? is there an analogous topological extension of the dimensional result on Gatzouras-Lalley self-affine carpets \cite{LaGa92}?

\appendix

\section{A weighted version of  the  Brin-Katok theorem}
\label{s-a}

The main result in this appendix is the following weighted version of the Brin-Katok theorem. It is needed in our proof of the lower bound of Theorem.

\begin{thm} \label{thm-4.1'} For each ergodic measure $\mu\in \mathcal{M}(X_1,T_1)$, we have
$$\lim_{\epsilon \rightarrow 0} \liminf_{n\rightarrow +\infty} \frac{-\log \mu(B_n^{\bf a}(x,\epsilon))}{n}=
\lim_{\epsilon \rightarrow 0} \limsup_{n\rightarrow +\infty}
\frac{-\log \mu(B_n^{\bf a}(x,\epsilon))}{n}=h_\mu^{\bf a}(T_1)$$
for $\mu$-a.e. $x\in X_1$.
\end{thm}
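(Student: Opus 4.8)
The plan is to reduce Theorem~\ref{thm-4.1'} to a weighted Shannon--McMillan--Breiman theorem for suitable dynamical partitions, and then to prove the latter by telescoping the information function along the tower of factors, invoking the classical (relative) Shannon--McMillan--Breiman theorem in each factor together with the combinatorial Lemma~\ref{lem-KP}. Throughout fix an ergodic $\mu\in\M(X_1,T_1)$; write $\mu_i=\mu\circ\tau_{i-1}^{-1}\in\M(X_i,T_i)$, $b_0=0$ and $b_i=a_1+\cdots+a_i$ (so $b_1=a_1>0$), and note that $B_n^{\bf a}(x,\epsilon)=\bigcap_{i=1}^k\tau_{i-1}^{-1}\bigl(B^{(i)}_{\lceil b_in\rceil}(\tau_{i-1}x,\epsilon)\bigr)$, where $B^{(i)}_m(\cdot,\epsilon)$ denotes the ordinary $(m,\epsilon)$-Bowen ball of $(X_i,T_i)$.

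First I would choose, for each $i$, a finite Borel partition $\xi_i$ of $X_i$ with $\operatorname{diam}\xi_i<\epsilon$ and $\mu_i(\partial\xi_i)=0$, compatibly through the tower ($\xi_i\ge\pi_i^{-1}\xi_{i+1}$ for $1\le i\le k-1$); set $\eta_i=\tau_{i-1}^{-1}\xi_i$ and form the weighted dynamical partition $P_n=\bigvee_{i=1}^k\bigvee_{j=0}^{\lceil b_in\rceil-1}T_1^{-j}\eta_i$ of $X_1$. Since $\operatorname{diam}P_n<\epsilon$ we have $P_n(x)\subseteq B_n^{\bf a}(x,\epsilon)$, which controls $-\frac1n\log\mu(B_n^{\bf a}(x,\epsilon))$ from above once the limit of $-\frac1n\log\mu(P_n(x))$ is identified in the core step below. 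For the opposite direction I would adapt Brin--Katok's boundary argument: applying Birkhoff's ergodic theorem to the sets $\{y\in X_i:B_{d_i}(y,\epsilon)\not\subseteq\xi_i'(y)\}$ for a still finer family $\{\xi_i'\}$, one shows that for $\mu$-a.e.\ $x$ and all large $n$ the ball $B_n^{\bf a}(x,\epsilon)$ is covered by at most $e^{\rho(\epsilon)n}$ atoms of the corresponding refined partition, with $\rho(\epsilon)\to0$ as $\epsilon\to0$; combined with the weighted Shannon--McMillan--Breiman statement this bounds $-\frac1n\log\mu(B_n^{\bf a}(x,\epsilon))$ from below. Letting the diameters (equivalently $\epsilon$) tend to $0$ and using $h_{\mu_i}(T_i,\xi_i)\uparrow h_{\mu_i}(T_i)$, both $\lim_{\epsilon\to0}\limsup_n$ and $\lim_{\epsilon\to0}\liminf_n$ of $-\frac1n\log\mu(B_n^{\bf a}(x,\epsilon))$ get squeezed to $h_\mu^{\bf a}(T_1)$.

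The core step is therefore: $-\frac1n\log\mu(P_n(x))\to\sum_{i=1}^k a_ih_{\mu_i}(T_i,\xi_i)$ for $\mu$-a.e.\ $x$. I would prove this by disintegrating $\mu$ along the decreasing chain of $T_1$-invariant $\sigma$-algebras $\mathcal C_i=\tau_{i-1}^{-1}\mathcal B_{X_i}$ and writing, with $\beta^{(i)}_m=\bigvee_{j=0}^{m-1}T_1^{-j}\eta_i$,
$$-\log\mu(P_n(x))=\sum_{l=1}^k I_\mu\Bigl(\beta^{(l)}_{\lceil b_ln\rceil}\ \Big|\ \bigvee_{i>l}\beta^{(i)}_{\lceil b_in\rceil}\Bigr)(x).$$
The $l$-th term involves only $\mathcal C_l$-measurable data conditioned on $\mathcal C_{l+1}$-measurable data, so it pushes down to $(X_l,T_l,\mu_l)$ and becomes the information of the length-$\lceil b_ln\rceil$ $\xi_l$-name of the $T_l$-orbit relative to the pulled-back $X_{l+1}$-data, the latter involving lengths up to $\lceil b_kn\rceil$. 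By the classical Shannon--McMillan--Breiman theorem, its relativized (Abramov--Rokhlin) version, and Birkhoff's theorem, this term equals $b_ln\bigl(h_{\mu_l}(T_l)-h_{\mu_{l+1}}(T_{l+1})\bigr)$ up to $o(n)$, plus ``mismatch'' corrections coming from the gap between the cutoff $\lceil b_ln\rceil$ and the longer ones $\lceil b_{l+1}n\rceil,\dots,\lceil b_kn\rceil$ (here $h_{\mu_{k+1}}(T_{k+1}):=0$). An Abel summation, using $b_i-b_{i-1}=a_i$, turns $\sum_l b_l\bigl(h_{\mu_l}(T_l)-h_{\mu_{l+1}}(T_{l+1})\bigr)$ into exactly $\sum_i a_ih_{\mu_i}(T_i)$, i.e.\ into $\sum_i a_ih_{\mu_i}(T_i,\xi_i)$ in the limit of small diameters. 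The mismatch corrections have the form $\sum_j\bigl(u_j(\lceil c_jn\rceil)-u_j(\lceil r_jn\rceil)\bigr)$ with $u_j(m)$ of the type $\frac1m H_{\mu_l}(\cdots)$ (length-$m$ conditional name entropies normalized by $m$), which are bounded and satisfy $|u_j(m+1)-u_j(m)|=O(\tfrac{\log m}{m})\to0$ by the estimates in Lemma~\ref{lem-en.0}; Lemma~\ref{lem-KP} then forces $\limsup_n(\text{correction})\ge0$, yielding $\liminf_n-\frac1n\log\mu(P_n(x))\ge\sum_i a_ih_{\mu_i}(T_i,\xi_i)$. The matching upper bound comes from the same telescoping together with $I_\mu(\cdot\mid\mathcal G)\ge0$ and a standard $L^1$/martingale passage from $H_\mu$ to the information function.

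The main obstacle is precisely the treatment of these mismatched cutoffs, just as in Kenyon--Peres' proof of \eqref{e-LY}: for fixed partitions and fixed $n$ the $l$-th telescoped term is \emph{not} asymptotically the relative entropy $b_ln\,h_{\mu_l}(T_l\mid\mathcal B_{X_{l+1}})$, since conditioning a ``short'' ($\sim b_ln$-step) $X_l$-name on a ``long'' ($\sim b_{l+1}n$-step or longer) pulled-back $X_{l+1}$-name can a priori remove more than the relative entropy --- equivalently, one must control how the Rokhlin fiber measures of $\mu$ over $X_{l+1}$ drift as the base point ranges over a long Bowen ball. The correct value is recovered only after taking $\limsup$ in $n$ through Lemma~\ref{lem-KP} and then $\epsilon\to0$; orchestrating these two limits alongside the relativized Shannon--McMillan--Breiman machinery is the delicate heart of the argument, while the remaining ingredients (the Brin--Katok boundary/covering reduction, the entropy-continuity estimates, ergodic and martingale convergence) are routine.
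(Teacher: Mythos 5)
Your overall architecture coincides with the paper's: reduce to a weighted Shannon--McMillan--Breiman theorem for the partitions $P_n=\bigvee_{i=1}^k\bigvee_{j=0}^{\lceil b_in\rceil-1}T_1^{-j}\tau_{i-1}^{-1}\xi_i$, get the upper bound from $P_n(x)\subseteq B_n^{\ba}(x,\epsilon)$, and get the lower bound from the Brin--Katok Hamming-name count. (In that last step your phrase ``covered by at most $e^{\rho(\epsilon)n}$ atoms, combined with SMB'' compresses away the essential exceptional-set argument: SMB controls only the atom containing $x$, while the ball may meet a few anomalously large atoms, so one must first discard, via Borel--Cantelli, the set $E_n$ of points whose nearby names include a large atom. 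That is standard Brin--Katok machinery and I take it as implicit in ``adapting''.)

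The genuine gap is in your core step. First, the claim that the $l$-th telescoped term $I_\mu\bigl(\beta^{(l)}_{\lceil b_ln\rceil}\mid\bigvee_{i>l}\beta^{(i)}_{\lceil b_in\rceil}\bigr)(x)$ equals $b_ln\bigl(h_{\mu_l}(T_l)-h_{\mu_{l+1}}(T_{l+1})\bigr)+o(n)$ plus a deterministic correction is exactly what needs proving; conditioning a short name on \emph{longer} names in coarser partitions is not delivered by the Abramov--Rokhlin relative SMB, and you supply no a.e.\ convergence mechanism for it. Second, and decisively, Lemma \ref{lem-KP} cannot close the argument: it yields only $\limsup_n(\text{correction})\ge 0$ for bounded slowly varying \emph{deterministic} sequences (and, swapping the roles of $c_j$ and $r_j$, also $\liminf_n\le 0$), so the corrections need not tend to $0$, and from a $\limsup\ge 0$ you can extract at best a subsequential lower bound for $-\frac1n\log\mu(P_n(x))$ --- never the $\liminf$ over all $n$ that Theorem \ref{thm-4.1'} requires. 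The inference ``$\limsup_n(\text{correction})\ge0$, yielding $\liminf_n(-\frac1n\log\mu(P_n(x)))\ge\sum_ia_ih_{\mu_i}(T_i,\xi_i)$'' is backwards. The paper sidesteps both issues: the single-block a.e.\ limit \eqref{fh-eq-2} is obtained from the exact chain-rule identity \eqref{fh-eq-4} together with Maker's ergodic theorem (Lemma \ref{Breiman}) applied to the martingale-convergent $G_m=I_\mu(\beta\mid\bigvee_{n=1}^{m-1}T^{-n}\beta)$, dominated via Chung's lemma; and the blocks at different levels are combined \emph{two-sidedly} by the Algoet--Cover sandwich, i.e.\ the ratios $Z_m$ and $R_m$ have integral one, Borel--Cantelli gives $\limsup_N\frac1N\log Z_N\le0$ and $\limsup_N\frac1N\log R_N\le0$ almost everywhere, and these two one-sided estimates squeeze $\frac1NI_\mu(P_N)(x)$ to its limit for all large $N$. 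In the paper Lemma \ref{lem-KP} appears only in the upper-bound half of Theorem \ref{thm-1.1}, where a subsequential inequality between deterministic entropies suffices; it is not the right tool for the pointwise limit here. To repair your proof you would need to replace the Lemma \ref{lem-KP} step by the Maker/sandwich argument (or an equivalent two-sided a.e.\ device).
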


When $\ba=(1,0,\ldots, 0)$, the above result reduces to the  Brin-Katok theorem on local entropy \cite{BrKa83}.

The proof of Theorem \ref{thm-4.1'} is based on the following weighted version of the Shannon-McMillan-Breiman theorem.
\begin{pro} \label{SMB} Let $(X,\mathcal{B},\mu,T)$ be a measure preserving
dynamical system and $k\ge 1$. Let $\alpha_1, \ldots,
\alpha_k$ be $k$ countable measurable partitions of
$(X,\mathcal{B},\mu)$ with $H_\mu(\alpha_i)<\infty$ for each $i$, and ${\bf a}=(a_1,\ldots,a_k)\in \mathbb{R}^k$
with $a_1>0$  and $a_i\ge 0$ for $i\ge 2$. Then
\begin{equation}\label{fh-eq-1} \lim_{N\rightarrow +\infty}\frac{1}{N}I_\mu
\Big(\bigvee_{i=1}^k (\alpha_i)_0^{\lceil(a_1+\cdots+a_i)N\rceil-1}
\Big)(x)=\sum_{i=1}^k a_i\mathbb{E}_\mu(F_i|\mathcal{I}_\mu)(x)
\end{equation}
almost everywhere, where $$F_i(x):=I_\mu
\Big(\bigvee_{j=i}^{k}\alpha_j\big|\bigvee_{n=1}^\infty
T^{-n}(\bigvee_{j=i}^{k}\alpha_j)\Big)(x), \quad i=1,\ldots,k$$ and
$\mathcal{I}_\mu=\{ B\in \mathcal{B}:\; \mu(B\triangle T^{-1}B)=0\}$.
In particular,  if $T$ is ergodic, we have
\begin{equation*}
\lim_{N\rightarrow +\infty}\frac{1}{N}I_\mu \Big(\bigvee_{i=1}^k
(\alpha_i)_0^{\lceil(a_1+\cdots+a_i)N\rceil-1} \Big)(x)=\sum_{i=1}^k
a_ih_\mu(T,\bigvee_{j=i}^{k}\alpha_j)
\end{equation*}
almost everywhere.
\end{pro}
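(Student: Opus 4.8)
The plan is to adapt the Kenyon--Peres proof of the Ledrappier--Young formula \cite[Lemma~3.1]{KePe96} together with the Chung--Brin--Katok treatment of the Shannon--McMillan--Breiman theorem \cite{BrKa83}. Put $b_i=a_1+\cdots+a_i$, $t_i=t_i(N)=\lceil b_iN\rceil$ and $\beta_i=\bigvee_{m=i}^{k}\alpha_m$, so that $H_\mu(\beta_i)\le\sum_{m\ge i}H_\mu(\alpha_m)<\infty$, $F_i=I_\mu\big(\beta_i\mid\bigvee_{n\ge1}T^{-n}\beta_i\big)$ and $\int F_i\,d\mu=h_\mu(T,\beta_i)$; write $(\gamma)_a^b=\bigvee_{j=a}^{b}T^{-j}\gamma$. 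The first step is the elementary identity
$$
\bigvee_{i=1}^{k}(\alpha_i)_0^{\,t_i-1}=\bigvee_{i=1}^{k}(\beta_i)_{\,t_{i-1}}^{\,t_i-1},
$$
which holds since at a fixed time $j$ the atoms of the left-hand join are exactly $T^{-j}\bigvee_{i:\,t_i>j}\alpha_i=T^{-j}\beta_{i(j)}$, where $i(j)$ is the index with $t_{i(j)-1}\le j<t_{i(j)}$ (blocks with $a_i=0$ are void). Peeling the coordinates $T^{-j}\beta_{i(j)}$ off from $j=t_k-1$ down to $0$ by the chain rule for the information function, the $i$-th block contributes pointwise and exactly
$$
\Delta_i(N,\cdot)=I_\mu\Big((\beta_i)_{\,t_{i-1}}^{\,t_i-1}\ \Big|\ \bigvee_{m=i+1}^{k}(\beta_m)_{\,t_{m-1}}^{\,t_m-1}\Big)
$$
to $I_\mu\big(\bigvee_{i=1}^{k}(\alpha_i)_0^{t_i-1}\big)$, and every partition in the conditioning of $\Delta_i$ sits at times $\ge t_i$, i.e.\ after the $i$-th block. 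So it suffices to show $\tfrac1N\Delta_i(N,x)\to a_i\,\mathbb{E}_\mu(F_i\mid\mathcal{I}_\mu)(x)$ $\mu$-a.e.\ for each $i$ and then sum over $i$.

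Each $\Delta_i$ is a relativized Shannon--McMillan--Breiman term. Expanding $\Delta_i$ by the chain rule along the block times and translating by $T^{t_{i-1}}$ gives $\Delta_i(N,x)=\sum_{r=0}^{M_N-1}I_\mu\big(\beta_i\mid(\beta_i)_1^{\,M_N-1-r}\vee\mathcal{E}_r^{(N)}\big)(T^{\,t_{i-1}+r}x)$, where $M_N=t_i-t_{i-1}$ and $\mathcal{E}_r^{(N)}$ (a translate of $\bigvee_{m>i}(\beta_m)_{t_{m-1}}^{t_m-1}$) lives at times $\ge M_N-r$ in the future. With $f_p:=I_\mu\big(\beta_i\mid(\beta_i)_1^{p}\big)$, the martingale convergence theorem gives $f_p\to F_i$ $\mu$-a.e., and the maximal lemma underlying the SMB theorem gives $G^*:=\sup_pf_p\in L^1(\mu)$ (whence $\tfrac1pG^*(T^px)\to0$ a.e.). Dropping the future bias $\mathcal{E}_r^{(N)}$ for a moment, the resulting sum is $\sum_{p=t_{i-1}}^{t_i-1}f_{\,t_i-1-p}(T^px)=\sum_{p=t_{i-1}}^{t_i-1}F_i(T^px)+\sum_{p=t_{i-1}}^{t_i-1}\big(f_{\,t_i-1-p}-F_i\big)(T^px)$; Birkhoff's ergodic theorem sends $\tfrac1N$ of the first sum to $(b_i-b_{i-1})\mathbb{E}_\mu(F_i\mid\mathcal{I}_\mu)(x)=a_i\mathbb{E}_\mu(F_i\mid\mathcal{I}_\mu)(x)$, while splitting the second at a fixed cutoff $m_0$ and bounding the tail by $\sup_{q\ge m_0}|f_q-F_i|$ on the long part and by $G^*+F_i$ on the last $m_0$ summands shows, letting $m_0\to\infty$ (conditional dominated convergence, and $\tfrac1pG^*(T^px)\to0$), that $\tfrac1N$ of it tends to $0$ a.e. The ergodic-case formula is then immediate, since then $\mathcal{I}_\mu$ is $\mu$-trivial so that $\mathbb{E}_\mu(F_i\mid\mathcal{I}_\mu)=\int F_i\,d\mu=h_\mu(T,\beta_i)=h_\mu\big(T,\bigvee_{m=i}^{k}\alpha_m\big)$ a.e.\ by the Kolmogorov--Rokhlin formula.

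I expect the main obstacle to be precisely that the future bias $\mathcal{E}_r^{(N)}$ cannot be discarded term by term: the information function $I_\mu(\gamma\mid\cdot)$ is not pointwise monotone in the conditioning $\sigma$-algebra, so $I_\mu\big(\beta_i\mid(\beta_i)_1^{p}\vee\mathcal{E}_r^{(N)}\big)$ lies, pointwise, neither above nor below $f_p$. The remedy, as in \cite{KePe96,BrKa83}, is to control the aggregate perturbation $\Delta_i(N,x)-\sum_{p=t_{i-1}}^{t_i-1}f_{\,t_i-1-p}(T^px)$ rather than its summands. Its $\mu$-integral is a conditional mutual information which, by subadditivity of $n\mapsto H_\mu\big((\beta_i)_0^{n-1}\big)$, is at most $H_\mu\big((\beta_i)_0^{L}\big)+H_\mu\big((\beta_i)_0^{M}\big)-H_\mu\big((\beta_i)_0^{L+M}\big)$ for suitable $L,M=\Theta(N)$, hence $o(N)$ because $\tfrac1nH_\mu\big((\beta_i)_0^{n-1}\big)\to h_\mu(T,\beta_i)$; upgrading this $o(N)$ bound to an almost-everywhere statement is then done by the same two-scale maximal-inequality device as above, exploiting that for each fixed offset $r$ the effective part of $\mathcal{E}_r^{(N)}$ recedes to the infinite future as $N\to\infty$ while the residual tail remains dominated by $G^*+F_i\in L^1(\mu)$. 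Granting this, the preceding reductions deliver \eqref{fh-eq-1}; the remaining ingredients (the combinatorial reduction, the chain-rule bookkeeping, and the two appeals to the ergodic theorem) are routine.
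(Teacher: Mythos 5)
Your combinatorial reduction is sound: the identity $\bigvee_{i=1}^{k}(\alpha_i)_0^{t_i-1}=\bigvee_{i=1}^{k}(\beta_i)_{t_{i-1}}^{t_i-1}$ with $\beta_i=\bigvee_{m\ge i}\alpha_m$, and your treatment of the \emph{unconditioned} block sums $\sum_{p=t_{i-1}}^{t_i-1}f_{t_i-1-p}(T^px)$ via telescoping, martingale convergence, Chung's lemma and a Maker-type ergodic theorem, is exactly how the paper proves its auxiliary formula \eqref{fh-eq-2}. The gap is where you yourself locate "the main obstacle," and your proposed remedy does not close it. The aggregate perturbation $\Delta_i(N,\cdot)-I_\mu\big((\beta_i)_{t_{i-1}}^{t_i-1}\big)$ is a difference of information functions whose integral is (minus) a mutual information of size $o(N)$, but an $o(N)$ bound on the integral of a quantity that is \emph{not sign-definite pointwise} cannot be upgraded to a.e.\ convergence of $\tfrac1N$ times it by Markov/Borel--Cantelli, and the "two-scale maximal-inequality device" you invoke is inapplicable here: that device rests on pointwise a.e.\ convergence $f_p\to F_i$ supplied by the increasing-martingale theorem, whereas the $\sigma$-algebras $(\beta_i)_1^{M_N-1-r}\vee\mathcal{E}_r^{(N)}$ are not monotone in $N$, so no such pointwise limit is available for the biased terms.

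What actually closes this gap in the paper is the Algoet--Cover "sandwich": one forms the ratio $Z_N=\mu(A)\mu(C)/\mu(A\cap C)$ (block versus future blocks), checks $\int Z_N\,d\mu=1$ exactly, and gets $\limsup_N\tfrac1N\log Z_N\le 0$ a.e.\ by Borel--Cantelli --- this yields only the \emph{upper} bound on the perturbation. The lower bound requires a second ratio $R_N$ involving three partitions, whose integral equals $1$ only because of the nesting $\beta_i\succeq\beta_{i+1}$ (the naive reciprocal $\mu(A\cap C)/(\mu(A)\mu(C))$ has integral equal to the number of atoms of the conditioning partition, which is unbounded). Your proposal never uses the refinement structure for this purpose, and without it the two-sided a.e.\ control of the cross-block dependence --- the heart of the proposition, which the paper organizes as an induction on $k$ --- is missing. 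To repair the argument you would need to replace your "aggregate perturbation" step by this sandwich (or an equivalent two-sided device exploiting $\beta_1\succeq\cdots\succeq\beta_k$).
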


When  $k=1$ and $a_1=1$,  Proposition \ref{SMB} reduces to the classical Shannon-McMillan-Breiman theorem (see e.g.  \cite[Theorem 7]{Par81}). We remark that a variant of  Proposition  \ref{SMB}, for certain particular partitions, was proved by Kenyon and Peres (cf. \cite[Lemmas 3.1 and 4.4]{KePe96}) in the  case that $\mu$ is ergodic. For completeness and for the convenience of the reader, we will
provide a full proof of Proposition  \ref{SMB} in the end of this section, by adapting the argument by Kenyon and Peres in \cite{KePe96}.

The following result is a direct corollary of Proposition \ref{SMB}.

\begin{cor}\label{c-SMB} Let $(X,\mathcal{B},\mu,T)$ be an ergodic measure preserving dynamical
system and $k\ge 1$. If $\alpha_1,\ldots, \alpha_k$ are
$k$ countable measurable partitions of $(X,\mathcal{B},\mu)$ with
$\alpha_1\succeq\alpha_2\succeq\cdots\succeq\alpha_k$ and
$H_\mu(\alpha_i)<\infty$, $i=1,\ldots,k$, and ${\bf
a}=(a_1,\ldots,a_k)\in \mathbb{R}^k$ with $a_1>0$  and $a_i\ge
0$ for $i\ge 2$, then
\begin{equation*}
\lim_{N\rightarrow +\infty}\frac{1}{N}I_\mu \Big(\bigvee_{i=1}^k
\Big(
\bigvee_{j=\lceil(a_0+\cdots+a_{i-1})N\rceil}^{\lceil(a_1+\cdots+a_i)N\rceil-1}T^{-j}\alpha_i\Big)
\Big)(x)=\sum_{i=1}^k a_ih_\mu(T,\alpha_i)
\end{equation*}
almost everywhere, where we make the convention $a_0=0$.
\end{cor}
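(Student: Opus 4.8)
The plan is to read Corollary~\ref{c-SMB} off Proposition~\ref{SMB}: under the nesting hypothesis $\alpha_1\succeq\cdots\succeq\alpha_k$, the ``staircase'' join appearing in \eqref{fh-eq-1} and the ``block'' join appearing in the corollary generate the same information (mod $\mu$), and the two right-hand sides coincide, so nothing genuinely new has to be proved.

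Concretely, I would fix $N$ and abbreviate $t_i:=\lceil(a_1+\cdots+a_i)N\rceil$ for $i=0,1,\ldots,k$, with $t_0=0$; since $a_i\ge 0$ one has $0=t_0\le t_1\le\cdots\le t_k$. For an integer $j$ with $0\le j\le t_k-1$ let $i^\ast(j)$ be the least $i\in\{1,\ldots,k\}$ with $t_i>j$, so that $t_{i^\ast(j)-1}\le j\le t_{i^\ast(j)}-1$. In the staircase partition $\bigvee_{i=1}^k(\alpha_i)_0^{t_i-1}=\bigvee_{i=1}^k\bigvee_{j=0}^{t_i-1}T^{-j}\alpha_i$, the coordinate $j$ carries exactly the terms $T^{-j}\alpha_i$ with $t_i-1\ge j$, i.e. with $i\ge i^\ast(j)$; since $\alpha_{i^\ast(j)}\succeq\cdots\succeq\alpha_k$, their join is $T^{-j}\alpha_{i^\ast(j)}$. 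In the block partition $\bigvee_{i=1}^k\bigvee_{j=t_{i-1}}^{t_i-1}T^{-j}\alpha_i$ the coordinate $j$ occurs only in the term indexed by $i=i^\ast(j)$ (empty blocks, which arise when some $a_i=0$ or when rounding produces no new integer, contribute nothing), so it again carries exactly $T^{-j}\alpha_{i^\ast(j)}$. Hence the two partitions agree, so their information functions agree $\mu$-a.e., and the left-hand side of \eqref{fh-eq-1} coincides $\mu$-a.e. with the left-hand side of the corollary for every $N$.

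For the right-hand side, the nesting hypothesis gives $\bigvee_{j=i}^k\alpha_j=\alpha_i$ (the finest member is $\alpha_i$) for each $i$, whence $h_\mu(T,\bigvee_{j=i}^k\alpha_j)=h_\mu(T,\alpha_i)$, and the right-hand side of the ergodic case of Proposition~\ref{SMB} becomes $\sum_{i=1}^k a_ih_\mu(T,\alpha_i)$, precisely the claimed quantity. Combining this with the previous paragraph and applying Proposition~\ref{SMB} (ergodic case) yields the corollary.

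Since this is a purely combinatorial reduction, there is no real obstacle; the only point that needs care is the verification that the coordinate $j$ selects the index $i^\ast(j)$ on both sides once the ceiling functions are present, and that empty blocks are harmless --- which is exactly what the middle paragraph checks.
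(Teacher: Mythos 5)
Your proof is correct and is precisely the verification the paper leaves implicit when it calls the statement "a direct corollary of Proposition \ref{SMB}": under the nesting hypothesis the block join equals the staircase join coordinate-by-coordinate, and $\bigvee_{j=i}^k\alpha_j=\alpha_i$ collapses the right-hand side. Nothing to add.
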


\begin{proof}[Proof of Theorem \ref{thm-4.1}] We just adapt the proof of Brin and Katok \cite{BrKa83} for their local entropy formula.

We first prove the upper bound.  Let $\epsilon>0$. Let $\alpha_i$ be a finite Borel
partition of $X_i$, $i=1,\ldots,k$, with
$\text{diam}(\alpha_i)<\epsilon$.
Then
$$B_n^{\bf a}(x,\epsilon)\supseteq \bigcap_{i=1}^k (\tau_{i-1}^{-1}\alpha_i)_0^{\lceil (a_1+\cdots+a_i)n\rceil-1}(x)$$
for $x\in X_1$. Hence by Proposition \ref{SMB},  for $\mu$-a.e $x\in X_1$ we have
\begin{align*}
\limsup_{n\rightarrow +\infty} &\frac{-\log \mu(B_n^{\bf
a}(x,\epsilon))}{n}\le \limsup_{n\rightarrow +\infty}
\frac{-\log \mu\Big(\bigcap \limits_{i=1}^k (\tau_{i-1}^{-1}\alpha_i)_0^{\lceil (a_1+\cdots+a_i)n\rceil-1}(x)\Big)}{n}\\
&=\limsup_{n\rightarrow +\infty} \frac{I_\mu\Big(\bigvee \limits_{i=1}^k (\tau_{i-1}^{-1}\alpha_i)_0^{\lceil (a_1+\cdots+a_i)n\rceil-1}\Big)(x)}{n}=\sum_{i=1}^k a_ih_\mu\Big(T_1,\bigvee_{j=i}^k \tau_{j-1}^{-1}\alpha_j\Big)\\
&=\sum_{i=1}^k a_ih_\mu\Big(T_1,\tau_{i-1}^{-1}\Big(\alpha_i\vee \bigvee_{j=i+1}^k \pi_{i}^{-1}\circ \cdots \circ \pi_{j-1}^{-1}\alpha_j\Big)\Big)\\
&=\sum_{i=1}^k a_ih_{\mu\circ \tau_{i-1}^{-1}}\Big(T_i,\alpha_i\vee
\bigvee_{j=i+1}^k
\pi_{i}^{-1}\circ \cdots \circ \pi_{j-1}^{-1}\alpha_j\Big)\\
&\le \sum_{i=1}^k a_i h_{\mu\circ \tau_{i-1}^{-1}}(T_i)=h_\mu^{\bf
a}(T_1).
\end{align*}
Letting $\epsilon\to 0$ in the above inequality, we have
$$\lim_{\epsilon \rightarrow 0} \limsup_{n\rightarrow +\infty}
\frac{-\log \mu(B_n^{\bf a}(x,\epsilon))}{n}\le h_\mu^{\bf a}(T_1).$$
This completes the proof of the upper bound.

Next we prove the lower bound.  It is sufficient to show that for any
$\delta>0$, there exist $\epsilon>0$ and a measurable subset $D$ of
$X_1$ such that $\mu(D)>1-3\delta$ and
$$\liminf_{n\rightarrow +\infty} \frac{-\log
\mu(B_n^{\bf a}(x,\epsilon))}{n}\ge \min\left\{\frac{1}{\delta}, h^{\bf
a}_{\mu}(T_1)-\delta\right\}-2(1+a_1+\cdots+a_k)\delta$$ for any $x\in D$.

Fix $\delta>0$. We are going to find such $\epsilon$ and $D$.
First, we find a finite Borel partition $\alpha_i=\{
A^i_1,A^i_2,\ldots,A^i_{u_i}\}$ of $X_i$,  $i=1,\ldots,k$,   such
that
\begin{itemize}
\item[(1)] $\alpha_i\succeq \pi_i^{-1}(\alpha_{i+1})$ for $i=1,\ldots,k-1$.
\item[(2)] $\sum_{i=1}^k a_ih_{\mu\circ \tau_{i-1}^{-1}}(T_i,\alpha_i)\ge \min\{\frac{1}{\delta}, h_\mu^{\bf a}(T_1)-\delta\}$.
\item[(3)] $\mu\circ \tau_{i-1}^{-1}(\partial \alpha_i)=0$ for $i=1,\ldots,k$.
\end{itemize}

\medskip
Let $M=\max\{u_i:\; 1\leq i\leq k\}$ and $\Lambda=\{1,\ldots,M\}$.
Given $m\in \mathbb{N}$, for ${\bf s}=(s_i)_{i=0}^{m-1},{\bf
t}=(t_i)_{i=0}^{k-1}\in \Lambda^{\{0,1,\cdots,m-1\}}$, the {\it
Hamming distance} between  ${\bf s}$ and ${\bf t}$ is defined to be the
following value
$$\frac{1}{m}\#\left\{ i\in \{0,1,\cdots,m-1\}: s_i\neq t_i\right\}.$$
For ${\bf s}\in \Lambda^{\{0,1,\cdots,m-1\}}$ and $0<\tau\le 1$, let
$Q({\bf s},\tau)$ be the total number of those ${\bf t}\in
\Lambda^{\{0,1,\cdots,m-1\}}$ so that the Hamming distance
between ${\bf s}$ and ${\bf t}$ does not exceed  $\tau$. Clearly,
$$Q_m(\tau):=\max \limits_{{\bf s}\in
\Lambda^{\{0,1,\cdots,m-1\}}}Q({\bf s},\tau)\le \binom{m}{\lceil
m\tau \rceil}M^{\lceil m\tau \rceil}.$$ By the Stirling formula, there exists a small
 $\delta_1>0$ and a positive constant $C:=C(\delta,M)>0$
such that
\begin{equation}\label{esti-1}
\binom{m}{\lceil m\delta_1\rceil}M^{\lceil m\delta_1\rceil}\le
e^{\delta m+C}
\end{equation}
for all $m\in \mathbb{N}$.

\medskip
 For $\eta>0$,  set
\begin{align*}
&U^i_\eta(\alpha_i)=\{x\in X_1:\; B(\tau_{i-1}x,\eta)\not \subseteq
\alpha_i(\tau_{i-1}x)\}, \quad i=1,\ldots,k.
\end{align*}
Then $\bigcap_{\eta>0} U^i_\eta(\alpha_i)=\tau_{i-1}^{-1}(\partial
\alpha_i)$, and hence
$\mu(U^i_\eta(\alpha_i))\rightarrow \mu(\tau_{i-1}^{-1}(\partial
\alpha_i))=0$  as $\eta \to 0$. Therefore,
we can choose $\epsilon>0$ such that $\mu(U^i_\eta(\alpha_i))< \delta_1$
for any $0<\eta\le \epsilon$ and $i=1,\ldots,k$.

By the Birkhoff ergodic theorem, for $\mu$-a.e. $x\in X_1$, we have
\begin{align*}
\lim_{n\rightarrow +\infty} & \frac{1}{\lceil
(a_1+\cdots+a_k)n \rceil}\sum \limits_{i=1}^k \sum_{j=\lceil
(a_0+\cdots+a_{i-1})n\rceil}^{\lceil
(a_1+\cdots+a_i)n\rceil-1}\chi_{U^i_\epsilon(\alpha_i)}(T^j_1x)\\
&=\frac{1}{(a_1+\cdots+a_k)} \sum \limits_{i=1}^k
a_i\mu(U^i_\epsilon(\alpha_i))<\delta_1,
\end{align*}
where we take the convention  $a_0=0$. Thus we can find a large natural number $\ell_0$ such
that $\mu(A_\ell)>1-\delta$ for any $\ell\ge \ell_0$, where
\begin{align*}
A_\ell &=\left\{ x\in X_1:\;\frac{1}{\lceil (a_1+\cdots+a_k)n \rceil}\sum
\limits_{i=1}^k \sum_{j=\lceil (a_0+\cdots+a_{i-1})n\rceil}^{\lceil
(a_1+\cdots+a_i)n\rceil-1}\chi_{U^i_\epsilon(\alpha_i)}(T^j_1x)\le
\delta_1  \text{ for all }n\ge \ell\right\}.
\end{align*}
Since $\tau_{0}^{-1}\alpha_1\succeq
\tau_{1}^{-1}\alpha_2\succeq\cdots\succeq \tau_{k-1}^{-1}\alpha_k$, we have
\begin{align*}
 \lim_{n\rightarrow +\infty}& \frac{-\log \mu
\Big(\bigvee \limits_{i=1}^k \Big(
\bigvee \limits_{j=\lceil(a_0+\cdots+a_{i-1})n\rceil}^{\lceil(a_1+\cdots+a_i)n\rceil-1}T_1^{-j}\tau_{i-1}^{-1}\alpha_i
\Big)(x)\Big)}{n}\\
&=\sum_{i=1}^k a_i h_\mu(T_1,\tau_{i-1}^{-1}\alpha_i)=\sum_{i=1}^k
a_i h_{\mu\circ \tau_{i-1}^{-1}}(T_i,\alpha_i)
\end{align*}
almost everywhere by Corollary \ref{c-SMB}. Hence we can find a
large natural number $\ell_1$ such that $\mu(B_\ell)>1-\delta$ for
any $\ell\ge \ell_1$, where $B_\ell$ is the set of all points $x\in
X_1$ such that
\begin{equation}\label{esti-3}\frac{-\log \mu
\Big(\bigvee_{i=1}^k \Big(
\bigvee_{j=\lceil(a_0+\cdots+a_{i-1})n\rceil}^{\lceil(a_1+\cdots+a_i)n\rceil-1}T_1^{-j}\tau_{i-1}^{-1}\alpha_i
\Big)(x)\Big)}{n}\ge \sum_{i=1}^k a_i h_{\mu\circ
\tau_{i-1}^{-1}}(T_i,\alpha_i)-\delta
\end{equation}
for all
$n\ge \ell$.

Fix $\ell\ge \max\{ \ell_0,\ell_1\}$. Let $E=A_\ell\cap B_\ell$.
Then $\mu(E)>1-2\delta$. For $x\in X_1$ and $n\in
\mathbb{N}$, the unique element $$C(n,x)=(C_j(n,x))_{j=0}^{\lceil
(a_1+\cdots+a_k)n\rceil-1}$$ in $\Lambda^{\{0,1,\cdots,\lceil
(a_1+\cdots+a_k)n\rceil-1\}}$ satisfying that $T_1^jx\in
\tau_{i-1}^{-1}(A^i_{C_j(n,x)})$ for
$\lceil(a_0+\cdots+a_{i-1})n\rceil\le j\le
\lceil(a_1+\cdots+a_i)n\rceil-1$, $i=1,\ldots,k$, is called the
{\it $(\{\alpha_i\}_{i=1}^k,{\bf a};n)$-name of $x$}. Since each point in one atom
$A$ of $\bigvee_{i=1}^k \Big(
\bigvee_{j=\lceil(a_0+\cdots+a_{i-1})n\rceil}^{\lceil(a_1+\cdots+a_i)n\rceil-1}T_1^{-j}\tau_{i-1}^{-1}\alpha_i
\Big)$ has the same $(\{\alpha_i\}_{i=1}^k,{\bf a};n)$-name, we define
$$C(n,A):=C(n,x)$$ for any $x\in A$, which is called the
{\it $(\{\alpha_i\}_{i=1}^k,{\bf a};n)$-name of $A$}.

Now if $y\in B_n^{\bf a}(x,\epsilon)$,  then  for
$i=1,\ldots,k$ and  $\lceil(a_0+\cdots+a_{i-1})n\rceil\le
j\le \lceil(a_1+\cdots+a_i)n\rceil-1$,  either $T_1^jx$ and $T_1^jy$
belong to the same element of $\tau_{i-1}^{-1}\alpha_i$ or
$T_1^jx\in U^i_\epsilon(\alpha_i)$. Hence if $x\in E$, $n\ge \ell$ and
$y\in B_n^{\bf a}(x,\epsilon)$, then the Hamming distance between
$(\{\alpha_i\}_{i=1}^k,{\bf a};n)$-name of $x$ and $y$ does not exceed
$\delta_1$. Furthermore, $B_n^{\bf a}(x,\epsilon)$ is contained in
the set of points $y$ whose $(\{\alpha_i\}_{i=1}^k,{\bf a};n)$-name is
$\delta_1 $-close to $(\{\alpha_i\}_{i=1}^k,{\bf a};n)$-name of $x$. It is
clear that the total number $L_n(x)$ of such $(\{\alpha_i\}_{i=1}^k,{\bf
a};n)$-names admits the following estimate:
\begin{align*}
L_n(x)&\le \binom{\lceil(a_1+\cdots+a_k)n\rceil}{\lceil\lceil(a_1+\cdots+a_k)n\rceil\delta_1\rceil}
M^{\lceil\lceil(a_1+\cdots+a_k)n\rceil\delta_1\rceil}\\
&\le
e^{\delta \lceil(a_1+\cdots+a_k)n\rceil+C}\\
&\le
e^{(a_1+\cdots+a_k)\delta n+C+\delta}
\end{align*}
 where the second
inequality comes from \eqref{esti-1}. More precisely, we have shown
that for any $x\in E$ and $n\ge \ell$,
\begin{equation}\label{esti-2}\begin{aligned}
B^a_n(x,\epsilon)&\subseteq \{ y\in X_1:
C(n,y)\text{ is $\delta_1 $-close to } C(n,x)\}\\
&=\bigcup \Big\{ A\in \bigvee_{i=1}^k \Big(
\bigvee_{j=\lceil(a_0+\cdots+a_{i-1})n\rceil}^{\lceil(a_1+\cdots+a_i)n\rceil-1}T_1^{-j}\tau_{i-1}^{-1}\alpha_i
\Big): C(n,A)\text{ is $\delta_1 $-close to } C(n,x)\Big\}
\end{aligned}
\end{equation}
and
\begin{equation}\label{esti-2-1}\begin{aligned}
&\#\Big\{ A\in \bigvee_{i=1}^k \Big(
\bigvee_{j=\lceil(a_0+\cdots+a_{i-1})n\rceil}^{\lceil(a_1+\cdots+a_i)n\rceil-1}T_1^{-j}\tau_{i-1}^{-1}\alpha_i
\Big): C(n,A)\text{ is $\delta_1 $-close to } C(n,x)\Big\}\\
&\le e^{(a_1+\cdots+a_k)\delta n+C+\delta}.
\end{aligned}
\end{equation}

Now for $n\in \mathbb{N}$, let $E_n$ denote the set of points $x$ in $E$
such that there exists an element $A$ in $\bigvee_{i=1}^k \Big(
\bigvee_{j=\lceil(a_0+\cdots+a_{i-1})n\rceil}^{\lceil(a_1+\cdots+a_i)n\rceil-1}T_1^{-j}\tau_{i-1}^{-1}\alpha_i
\Big)$ with $$\mu(A)>e^{\big(-\sum_{i=1}^k a_i h_{\mu\circ
\tau_{i-1}^{-1}}(T_i,\alpha_i)+(2+a_1+\cdots+a_k)\delta \big)n}$$
and the $(\{\alpha_i\}_{i=1}^k,{\bf a};n)$-name of $A$ is $\delta_1$-close
to the $(\{\alpha_i\}_{i=1}^k,{\bf a};n)$-name of $x$. It is clear that if
$x\in E\setminus E_n$, then for each $A\in \bigvee_{i=1}^k \Big(
\bigvee_{j=\lceil(a_0+\cdots+a_{i-1})n\rceil}^{\lceil(a_1+\cdots+a_i)n\rceil-1}T_1^{-j}\tau_{i-1}^{-1}\alpha_i
\Big)$ whose $(\{\alpha_i\}_{i=1}^k,{\bf a};n)$-name is $\delta_1$-close
to the $(\{\alpha_i\}_{i=1}^k,{\bf a};n)$-name of $x$, one has $$\mu(A)\le
e^{\big(-\sum_{i=1}^k a_i h_{\mu\circ
\tau_{i-1}^{-1}}(T_i,\alpha_i)+(2+a_1+\cdots+a_k)\delta \big)n}.$$
In the following, we wish to estimate the measure of $E_n$ for $n\ge
\ell$.

Let $n\ge \ell$.  Put $$\mathcal{F}_n=\left\{ A\in \bigvee_{i=1}^k
\Big(
\bigvee_{j=\lceil(a_0+\cdots+a_{i-1})n\rceil}^{\lceil(a_1+\cdots+a_i)n\rceil-1}T_1^{-j}\tau_{i-1}^{-1}\alpha_i
\Big): \mu(A)>e^{\big(-\sum \limits_{i=1}^k a_i h_{\mu\circ
\tau_{i-1}^{-1}}(T_i,\alpha_i)+(2+a_1+\cdots+a_k)\delta
\big)n}\right\}.$$ Obviously,
$$\#\mathcal{F}_n\le e^{\big(\sum_{i=1}^k a_i h_{\mu\circ
\tau_{i-1}^{-1}}(T_i,\alpha_i)-(2+a_1+\cdots+a_k)\delta \big)n}$$
since $\mu(X_1)=1$.

Let $x\in E_n$. On the one hand since $x\in B_\ell$, $$\mu
\Big(\bigvee_{i=1}^k \Big(
\bigvee_{j=\lceil(a_0+\cdots+a_{i-1})n\rceil}^{\lceil(a_1+\cdots+a_i)n\rceil-1}T_1^{-j}\tau_{i-1}^{-1}\alpha_i
\Big)(x)\Big)\le e^{\big(-\sum_{i=1}^k a_i h_{\mu\circ
\tau_{i-1}^{-1}}(T_i,\alpha_i)+\delta \big)n}$$ by \eqref{esti-3}.
On the other hand by the definition of $E_n$, there exists $A\in
\mathcal{F}_n$ with the $(\{\alpha_i\}_{i=1}^k,{\bf a};n)$-name of $A$ is
$\delta_1$-close to the $(\{\alpha_i\}_{i=1}^k,{\bf a};n)$-name of $x$,
that is the $(\{\alpha_i\}_{i=1}^k,{\bf a};n)$-name of $A$ is
$\delta_1$-close to the $(\{\alpha_i\}_{i=1}^k,{\bf a};n)$-name of
$$\Big(\bigvee_{i=1}^k
\bigvee_{j=\lceil(a_0+\cdots+a_{i-1})n\rceil}^{\lceil(a_1+\cdots+a_i)n\rceil-1}T_1^{-j}\tau_{i-1}^{-1}\alpha_i
\Big)(x).$$ According to this, we have
\begin{equation}\label{esti-cc}
E_n\subset \bigcup \{B:B\in \mathcal{G}_n\}
\end{equation}
where $\mathcal{G}_n$ denotes the set all elements $B$ in
$\bigvee_{i=1}^k \Big(
\bigvee_{j=\lceil(a_0+\cdots+a_{i-1})n\rceil}^{\lceil(a_1+\cdots+a_i)n\rceil-1}T_1^{-j}\tau_{i-1}^{-1}\alpha_i
\Big)$ satisfying $\mu(B)\le e^{\big(-\sum_{i=1}^k a_i h_{\mu\circ
\tau_{i-1}^{-1}}(T_i,\alpha_i)+\delta \big)n}$ and the
$(\{\alpha_i\}_{i=1}^k,{\bf a};n)$-name of $B$ is $\delta_1$-close to the
$(\{\alpha_i\}_{i=1}^k,{\bf a};n)$-name of $A$ for some $A\in
\mathcal{F}_n$.

Since for each $A\in \mathcal{F}_n$, the total number of  $B$ in
$\bigvee_{i=1}^k \Big(
\bigvee_{j=\lceil(a_0+\cdots+a_{i-1})n\rceil}^{\lceil(a_1+\cdots+a_i)n\rceil-1}T_1^{-j}\tau_{i-1}^{-1}\alpha_i
\Big)$, whose $(\{\alpha_i\},{\bf a};n)$-name is $\delta_1$-close
to the $(\{\alpha_i\},{\bf a};n)$-name of $A$, is upper bounded by
$$ \binom{\lceil(a_1+\cdots+a_k)n\rceil}{\lceil\lceil(a_1+\cdots+a_k)n\rceil\delta_1\rceil}
M^{\lceil\lceil(a_1+\cdots+a_k)n\rceil\delta_1\rceil}\le
e^{(a_1+\cdots+a_k)\delta n+C+\delta}.$$
Hence
$$\#\mathcal{G}_n\le e^{(a_1+\cdots+a_k)\delta n+C+\delta}\cdot
(\# \mathcal{F}_n)\le e^{\left(\sum_{i=1}^k a_i h_{\mu\circ
\tau_{i-1}^{-1}}(T_i,\alpha_i)-2\delta\right)n+C+\delta}.
$$
Moreover
$$\mu(E_n)\le e^{\big(-\sum_{i=1}^k a_i h_{\mu\circ
\tau_{i-1}^{-1}}(T_i,\alpha_i)+\delta \big)n}\cdot
(\#\mathcal{G}_n)\le e^{-\delta n+C+\delta}$$ by \eqref{esti-cc}
and the definition of $\mathcal{G}_n$.

Next we take $\ell_2\ge \ell$ so that $\sum_{n=\ell_2}^\infty
e^{-\delta n+C+\delta}<\delta$. Then $ \mu(\bigcup_{n\ge
\ell_2}E_n)<\delta$. Let $D=E\setminus \bigcup_{n\ge \ell_2}E_n$.
Then $\mu(D)>1-3\delta$. For $x\in D$ and $n\ge \ell_2$, since
$x\in E\setminus E_n$, one has
\begin{align*}
\mu(B_n^{\bf a}(x,\epsilon))&\le e^{(a_1+\cdots+a_k)n+C+\delta}\cdot
e^{\big(-\sum_{i=1}^k a_i h_{\mu\circ
\tau_{i-1}^{-1}}(T_i,\alpha_i)+(2+a_1+\cdots+a_k)\delta \big)n}\\
&=e^{\big(-\sum_{i=1}^k a_i h_{\mu\circ
\tau_{i-1}^{-1}}(T_i,\alpha_i)+2(1+a_1+\cdots+a_k)\delta
\big)n+C+\delta}
\end{align*}
by \eqref{esti-2}, \eqref{esti-2-1}
and the definition of $E_n$. Thus for $x\in D$,
\begin{align*}
\liminf_{n\rightarrow +\infty} \frac{-\log \mu(B_n^{\bf
a}(x,\epsilon))}{n}&\ge \sum_{i=1}^k a_i h_{\mu\circ
\tau_{i-1}^{-1}}(T_i,\alpha_i)-2(1+a_1+\cdots+a_k)\delta\\
&\ge\min\left\{\frac{1}{\delta}, h^{\bf
a}_{\mu}(T_1)-\delta\right\}-2(1+a_1+\cdots+a_k)\delta.
\end{align*}
 This finishes
the proof of Theorem \ref{thm-4.1'}.
\end{proof}

In the remaining part of this section, we provide a full proof of Proposition \ref{SMB}. First we give two lemmas.

\begin{lem}[cf. \cite{Par81}]
\label{Martingale}
Let $(X,\mathcal{B},\mu,T)$ be a measure preserving
dynamical system. Let $\alpha,\beta$ be two countable measurable
partitions of $(X,\mathcal{B},\mu)$ with $H_\mu(\alpha)<\infty,
H_\mu(\beta)<\infty$ and $\mathcal{A}$ a sub-$\sigma$-algebra of
$\mathcal{B}$. Let $I_\mu(\cdot|\cdot)$ denote the conditional information of $\mu$. Then we have the following:
\begin{itemize}
\item[(i)] $I_\mu(\alpha|\mathcal{A})\circ
T=I_\mu(T^{-1}\alpha|T^{-1}\mathcal{A})$.

\item[(ii)] $I_\mu(\alpha\vee \beta|\mathcal{A})=I_\mu(\alpha|\mathcal{A})+I_\mu(\beta|\alpha\vee
\mathcal{A})$. In particular, $H_\mu(\alpha\vee
\beta|\mathcal{A})=H_\mu(\alpha|\mathcal{A})+H_\mu(\beta|\alpha\vee
\mathcal{A})$.

\item[(iii)] If $\mathcal{A}_1\subset \mathcal{A}_2\subset \cdots$
is an increasing sub-$\sigma$-algebra of $\mathcal{B}$ with
$\mathcal{A}_n\uparrow \mathcal{A}$, then
$I_\mu(\alpha|\mathcal{A}_n)$ converges almost everywhere and in
$L^1$ to $I_\mu(\alpha|\mathcal{A})$. In particular,
$\lim_{n\rightarrow +\infty}
H_\mu(\alpha|\mathcal{A}_n)=H_\mu(\alpha|\mathcal{A})$.
\end{itemize}
\end{lem}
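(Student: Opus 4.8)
The plan is to derive each of the three assertions from the corresponding property of the conditional expectation operator $\mathbb{E}_\mu(\,\cdot\mid\,\cdot\,)$, handling one atom at a time and then recombining. Recall that for a countable measurable partition $\gamma=\{C_n\}$ of $(X,\mathcal{B},\mu)$ and a sub-$\sigma$-algebra $\mathcal{A}\subseteq\mathcal{B}$ one has $I_\mu(\gamma\mid\mathcal{A})(x)=-\sum_n\chi_{C_n}(x)\log\mathbb{E}_\mu(\chi_{C_n}\mid\mathcal{A})(x)$, which on the atom $\gamma(x)$ equals $-\log\mathbb{E}_\mu(\chi_{\gamma(x)}\mid\mathcal{A})(x)$. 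Since $H_\mu(\alpha),H_\mu(\beta)<\infty$, every information function that appears is $\mu$-integrable, so passing between the pointwise identities and their integrated (conditional-entropy) forms will be automatic.

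For (i) I would use the $T$-invariance of $\mu$. Fix an atom $A$ of $\alpha$ and put $g=\mathbb{E}_\mu(\chi_A\mid\mathcal{A})$. Then $g\circ T$ is $T^{-1}\mathcal{A}$-measurable, and for every $B\in\mathcal{A}$ the change of variables afforded by $T_*\mu=\mu$ gives $\int_{T^{-1}B}g\circ T\,d\mu=\int_B g\,d\mu=\mu(A\cap B)=\mu(T^{-1}A\cap T^{-1}B)=\int_{T^{-1}B}\chi_{T^{-1}A}\,d\mu$; since $\{T^{-1}B:B\in\mathcal{A}\}=T^{-1}\mathcal{A}$, uniqueness of conditional expectation yields $\mathbb{E}_\mu(\chi_A\mid\mathcal{A})\circ T=\mathbb{E}_\mu(\chi_{T^{-1}A}\mid T^{-1}\mathcal{A})$ $\mu$-a.e. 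Composing with $-\log$, multiplying by $\chi_A\circ T=\chi_{T^{-1}A}$, and summing over the atoms $A$ of $\alpha$ gives $I_\mu(\alpha\mid\mathcal{A})\circ T=I_\mu(T^{-1}\alpha\mid T^{-1}\mathcal{A})$.

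For (ii) the engine is the chain rule (tower property) for conditional expectation, via the elementary identity that for any atom $A$ of $\alpha$ and $B$ of $\beta$ one has, $\mu$-a.e. on $A$, $\mathbb{E}_\mu(\chi_B\mid\alpha\vee\mathcal{A})=\mathbb{E}_\mu(\chi_{A\cap B}\mid\mathcal{A})/\mathbb{E}_\mu(\chi_A\mid\mathcal{A})$; the denominator is positive $\mu$-a.e. on $A$ because $\{\mathbb{E}_\mu(\chi_A\mid\mathcal{A})=0\}\in\mathcal{A}$ has null intersection with $A$. I would verify this identity by checking that both sides integrate to $\mu(A\cap B\cap E)$ over every generating set $A_j\cap E$ of $\alpha\vee\mathcal{A}$ with $E\in\mathcal{A}$, the nontrivial case being $A_j=A$ and using the tower property. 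Restricting to the atom $A\cap B$ of $\alpha\vee\beta$ and taking $-\log$ then gives the pointwise identity $I_\mu(\alpha\vee\beta\mid\mathcal{A})=I_\mu(\alpha\mid\mathcal{A})+I_\mu(\beta\mid\alpha\vee\mathcal{A})$; integrating yields the stated relation for conditional entropies.

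For (iii) I would fix an atom $A$ of $\alpha$: since $(\mathbb{E}_\mu(\chi_A\mid\mathcal{A}_m))_{m\ge1}$ is a uniformly bounded martingale for the filtration $(\mathcal{A}_m)$, Doob's martingale convergence theorem gives $\mathbb{E}_\mu(\chi_A\mid\mathcal{A}_m)\to\mathbb{E}_\mu(\chi_A\mid\mathcal{A})$ both $\mu$-a.e. and in $L^1$; the limit is strictly positive $\mu$-a.e. on $A$, so composing with $-\log$ and summing over the atoms yields $I_\mu(\alpha\mid\mathcal{A}_m)\to I_\mu(\alpha\mid\mathcal{A})$ $\mu$-a.e. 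The point requiring the most care is the $L^1$ convergence (equivalently $H_\mu(\alpha\mid\mathcal{A}_m)\to H_\mu(\alpha\mid\mathcal{A})$), and for this I would establish integrability of the maximal function $g^*:=\sup_m I_\mu(\alpha\mid\mathcal{A}_m)$: the Chung--Neveu argument — decomposing $\{\inf_m\mathbb{E}_\mu(\chi_A\mid\mathcal{A}_m)<\lambda\}$ into the first-passage pieces, which lie in the $\mathcal{A}_m$'s — gives $\mu\{x\in A:\ \inf_m\mathbb{E}_\mu(\chi_A\mid\mathcal{A}_m)(x)<\lambda\}\le\lambda$ for every $\lambda>0$, whence $\int_A g^*\,d\mu\le -\mu(A)\log\mu(A)+\mu(A)$; summing over the atoms $A$ and using $H_\mu(\alpha)<\infty$ shows $g^*\in L^1(\mu)$, and dominated convergence then finishes (iii). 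This reduces the whole lemma to these standard facts; the only genuine subtlety is the maximal estimate underlying the $L^1$ part of (iii).
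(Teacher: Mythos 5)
Your proof is correct. The paper does not prove this lemma at all --- it is quoted directly from Parry's book --- and your argument (atom-by-atom reduction to the corresponding conditional-expectation identities for (i) and (ii), and Doob martingale convergence combined with the Chung--Neveu maximal estimate $\int_X \sup_m I_\mu(\alpha\mid\mathcal{A}_m)\,d\mu\le H_\mu(\alpha)+1$ to justify the $L^1$ part of (iii)) is precisely the standard proof found in that reference.
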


\begin{lem}\label{Breiman} Let $(X,\mathcal{B},\mu,T)$ be a measure preserving
dynamical system and $F_n\in L^1(X,\mathcal{B},\mu)$ be a sequence
that converges almost everywhere and in $L^1$ to $F\in
L^1(X,\mathcal{B},\mu)$ and $\int_X \sup_k |F_n(x)|
d\mu(x)<+\infty$. If $f:\mathbb{N}\rightarrow \mathbb{N}$ satisfies
$f(n)\ge n$ for all $k\in \mathbb{N}$, then
$$\lim \limits_{n\rightarrow
+\infty}\frac{1}{n}\sum_{j=0}^{n-1}F_{f(n)-j}(T^jx)=\mathbb{E}_\mu(F|\mathcal{I}_\mu)(x)$$
almost everywhere and in $L^1$, where $\mathcal{I}_\mu=\{ B\in
\mathcal{B}: \mu(B\Delta T^{-1}B)=0\}$ and $\mathbb{E}_\mu(F|\mathcal{I}_\mu)$ stands for the conditional expectation of $F$ given $\mathcal{I}_\mu$.
\end{lem}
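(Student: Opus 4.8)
The final statement in the excerpt is Lemma~\ref{Breiman}: for a measure preserving system $(X,\mathcal B,\mu,T)$ and $F_n\in L^1$ converging a.e.\ and in $L^1$ to $F$ with $\int_X\sup_k|F_k|\,d\mu<\infty$, and $f:\N\to\N$ with $f(n)\ge n$, one has
\[
\lim_{n\to\infty}\frac1n\sum_{j=0}^{n-1}F_{f(n)-j}(T^jx)=\E_\mu(F\mid\mathcal I_\mu)(x)
\]
a.e.\ and in $L^1$. This is the classical ``Breiman lemma'' used in the Shannon--McMillan--Breiman theorem; the plan is to adapt the standard proof (as in Parry~\cite{Par81}) to the shifted index $f(n)-j$.

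\smallskip

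\textbf{Plan of proof.} The idea is to bracket the variable-index sum between two fixed-index Birkhoff-type averages. Set $G_N=\sup_{k\ge N}|F_k-F|$; by the a.e.\ convergence $G_N\downarrow 0$ a.e., and by the domination hypothesis $G_N\le \sup_k|F_k|+|F|\in L^1$, so by dominated convergence $G_N\to 0$ in $L^1$ as well. First I would split
\[
\frac1n\sum_{j=0}^{n-1}F_{f(n)-j}(T^jx)=\frac1n\sum_{j=0}^{n-1}F(T^jx)+\frac1n\sum_{j=0}^{n-1}\bigl(F_{f(n)-j}-F\bigr)(T^jx).
\]
By Birkhoff's ergodic theorem the first term converges a.e.\ and in $L^1$ to $\E_\mu(F\mid\mathcal I_\mu)$. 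So it remains to show the error term tends to $0$. Since $f(n)\ge n$, for $0\le j\le n-1$ we have $f(n)-j\ge n-j\ge 1$; more usefully, fix $N\in\N$: once $n$ is large enough that $n-j$ can be forced to be $\ge N$ when $j$ is not too close to $n-1$, we can estimate
\[
\Bigl|\frac1n\sum_{j=0}^{n-1}(F_{f(n)-j}-F)(T^jx)\Bigr|\le \frac1n\sum_{j=0}^{n-N}G_N(T^jx)+\frac1n\sum_{j=n-N+1}^{n-1}\bigl(|F_{f(n)-j}|+|F|\bigr)(T^jx).
\]
The second sum has only $N-1$ terms, each dominated by $H:=\sup_k|F_k|+|F|\in L^1$; I would control $\frac1n\sum_{j=n-N+1}^{n-1}H(T^jx)$ by noting $\frac1n\sum_{j=0}^{n-1}H(T^jx)$ converges a.e., hence the tail of width $N-1$ divided by $n$ goes to $0$ a.e.\ and (by the $L^1$ maximal function / dominated convergence) in $L^1$. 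For the first sum, $\limsup_n \frac1n\sum_{j=0}^{n-N}G_N(T^jx)\le \E_\mu(G_N\mid\mathcal I_\mu)(x)$ by Birkhoff applied to $G_N$. Letting $N\to\infty$ and using $G_N\downarrow 0$ in $L^1$ (so $\E_\mu(G_N\mid\mathcal I_\mu)\to 0$ in $L^1$, and along a subsequence a.e., but monotonicity in $N$ gives full a.e.\ convergence to $0$) kills this term. Assembling the pieces gives the a.e.\ statement; the $L^1$ statement follows by integrating the same bounds and using $\int G_N\,d\mu\to0$ and the $L^1$ boundedness of the ergodic averages of $H$.

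\smallskip

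\textbf{Main obstacle.} The routine part is Birkhoff's theorem; the delicate point is handling the ``boundary'' indices $j$ close to $n-1$, where $f(n)-j$ need not be large, so $F_{f(n)-j}$ cannot be compared to $F$ and must instead be absorbed into the uniform dominating function $H$. The care needed is that this boundary block has width independent of $n$ (namely $N-1$), so after division by $n$ it is negligible both a.e.\ and in $L^1$ — the $L^1$ control here is where the hypothesis $\int_X\sup_k|F_k|\,d\mu<\infty$ is essential, since without an integrable dominant the contribution $\frac1n\sum_{j=n-N+1}^{n-1}|F_{f(n)-j}|(T^jx)$ could fail to be $L^1$-small. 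A secondary subtlety is the order of limits: one must first fix $N$, take $n\to\infty$, and only then let $N\to\infty$; monotonicity of $N\mapsto G_N$ guarantees that the resulting a.e.\ $\limsup$ bound is genuinely $\E_\mu(G_N\mid\mathcal I_\mu)$ and decreases to $0$.
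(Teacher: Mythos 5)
Your proposal is correct and follows essentially the same route as the paper's proof: the same splitting off of the Birkhoff average of $F$, the same truncated supremum $G_N=\sup_{k\ge N}|F_k-F|$ (the paper's $Z_m$), the same bracketing into a main block controlled by Birkhoff applied to $G_N$ plus a boundary block of bounded width absorbed into an integrable dominant, and the same final limit $N\to\infty$ using monotonicity and $\E_\mu(G_N\mid\mathcal I_\mu)\to 0$. The only cosmetic difference is that the paper dominates the boundary block by $Z_1$ rather than by $\sup_k|F_k|+|F|$, which changes nothing.
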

\begin{proof} This is a slight variant of Maker's ergodic theorem \cite{Mak40}. For the convenience of the reader, we give a detailed proof.  Since $F\in L^1(X,\mathcal{B},\mu)$, by Birkhoff's ergodic
theorem, we have
$$ \lim \limits_{n\rightarrow
+\infty}\frac{1}{n}\sum_{j=0}^{n-1}F(T^jx)=\mathbb{E}_\mu(F|\mathcal{I}_\mu)(x)$$
almost everywhere and in $L^1$. Since
$$\frac{1}{n}\sum_{j=0}^{n-1}F_{f(n)-j}(T^jx)=\frac{1}{n}\sum_{j=0}^{n-1} F(T^jx)+\frac{1}{n}\sum_{j=0}^{n-1}(F_{f(n)-j}(T^jx)-F(T^jx)),$$
 it is suffices to show that
$$\lim \limits_{n\rightarrow
+\infty}\frac{1}{n}\sum_{j=0}^{n-1}|F_{f(n)-j}(T^jx)-F(T^jx)|=0
$$
almost everywhere and in $L^1$. Set $Z_m(x)=\sup_{j\ge
m}|F_j(x)-F(x)|$ for $m\in \N$. Then $0\le Z_m(x)\le \sup_n |F_n(x)|+|F(x)|$ and
$Z_m(x)\to 0$ as $m\rightarrow +\infty$ almost everywhere. Since $\sup_n|F_n(x)|+|F(x)|\in L^1(X,\mathcal{B},\mu)$,
we have $\lim_{m\rightarrow +\infty} \int  Z_m(x) d\mu(x)=0$  by Lebesgue's
dominated convergence theorem. Then we have
$\mathbb{E}_\mu(Z_m|\mathcal{I}_\mu)\rightarrow 0$ as $m\rightarrow
+\infty$ almost everywhere and in $L^1$ (cf. \cite[Theorem 34.2]{Bil95}).

Now let $m\in \mathbb{N}$. For $n>m+1$,
\begin{align*}
 \frac{1}{n} & \sum_{j=0}^{n-1} |F_{f(n)-j}(T^jx)-F(T^jx)|\\
&\le
\frac{1}{n}\sum_{j=n-m}^{n-1}|F_{f(n)-j}(T^jx)-F(T^jx)|+\frac{1}{n}\sum_{j=0}^{n-m-1}
Z_m(T^jx)\\
&\le
\frac{1}{n}\sum_{j=n-m}^{n-1}Z_1(T^jx)+\frac{n-m}{n}\Big(\frac{1}{n-m}\sum_{j=0}^{n-m-1}
Z_m(T^jx)\Big).
\end{align*}

Letting $n\rightarrow +\infty$ and using Birkhoff's ergodic theorem we
have $$\limsup_{n\rightarrow +\infty}\frac{1}{n}\sum_{j=0}^{n-1}
|F_{f(n)-j}(T^jx)-F(T^jx)|\le
\mathbb{E}_\mu(Z_m|\mathcal{I}_\mu)(x)$$ almost everywhere.
 Since
$\mathbb{E}_\mu(Z_m|\mathcal{I}_\mu)\to 0 $ almost everywhere and in $L^1$ as $m\to \infty$,  we have
$$\limsup_{n\rightarrow +\infty}\frac{1}{n}\sum_{j=0}^{n-1} |F_{f(n)-j}(T^jx)-F(T^jx)|=0$$
almost everywhere and in $L^1$, as desired.
\end{proof}

\begin{proof}[Proof of Proposition \ref{SMB}] Our proof is adapted from the arguments of Kenyon and Peres in \cite[Lemmas 3.2, 4.4]{KePe96}.

First we  show that for any $a>0$, $b\ge 0$ and a countable measurable partition $\beta$ of
$(X,\mathcal{B},\mu)$ with $H_\mu(\beta)<\infty$,
\begin{equation}\label{fh-eq-2}
\lim_{N\rightarrow +\infty}\frac{1}{N}I_\mu \Big(
\beta_{\lceil aN \rceil}^{\lceil (a+b)N \rceil-1}
\Big)(x)=b\mathbb{E}_\mu(G|\mathcal{I}_\mu)(x)
\end{equation}
almost everywhere, where $G(x):=I_\mu
\Big(\beta|\bigvee_{n=1}^\infty T^{-n}\beta \Big)(x)$.

If $b=0$, then $\beta_{\lceil aN\rceil}^{\lceil (a+b)N \rceil-1}=\{X, \emptyset\}\ (\text{mod}\ \mu)$
for each $N\in\mathbb{N}$ and so \eqref{fh-eq-2} holds. Now assume
that $b>0$. Note that
$$I_\mu \Big( \bigvee_{n=\lceil aN \rceil}^{\lceil(a+b)N\rceil-1}T^{-n}\beta \Big)(x)=I_\mu
\Big(\bigvee_{n=0}^{\lceil(a+b)N\rceil-1}T^{-n}\beta \Big)(x)-I_\mu(
\bigvee_{n=0}^{\lceil aN\rceil-1}T^{-n}\beta|
\bigvee_{n=\lceil aN \rceil}^{\lceil (a+b)N\rceil-1}T^{-n}\beta\Big)(x).$$ By
the Shannon-McMillan-Breiman theorem, \eqref{fh-eq-2} is equivalent to
\begin{equation}\label{fh-eq-3}
\lim_{N\rightarrow +\infty}\frac{1}{N}I_\mu( \bigvee_{n=0}^{\lceil
aN\rceil-1}T^{-n}\beta| \bigvee_{n=\lceil aN\rceil}^{\lceil
(a+b)N\rceil-1}T^{-n}\beta\Big)(x)=a\mathbb{E}_\mu(G|\mathcal{I}_\mu)(x)
\end{equation}
almost everywhere.

Note that
\begin{align*}
 I_\mu & \Big( \bigvee_{n=0}^{\lceil aN\rceil-1}T^{-n}\beta|
\bigvee_{n=\lceil aN\rceil}^{\lceil(a+b)N\rceil-1}T^{-n}\beta\Big)(x)\\
&=I_\mu\Big(\beta|\bigvee_{n=1}^{\lceil
(a+b)N\rceil-1}T^{-n}\beta\Big)(x)+I_\mu\Big(\bigvee_{n=1}^{\lceil aN
\rceil-1}T^{-n}\beta|
\bigvee_{n=\lceil aN \rceil}^{\lceil(a+b)N\rceil-1}T^{-n}\beta\Big)(x)\\
&=I_\mu\Big(\beta|\bigvee_{n=1}^{\lceil(a+b)N
\rceil-1}T^{-n}\beta\Big)(x)+I_\mu\Big(\bigvee_{n=0}^{\lceil aN
\rceil-2}T^{-n}\beta|
\bigvee_{n=\lceil aN \rceil-1}^{\lceil (a+b)N \rceil-2}T^{-n}\beta\Big)(Tx)\\
&\qquad\qquad \vdots  \\
&=\sum_{j=0}^{\lceil aN
\rceil-1}I_\mu\Big(\beta|\bigvee_{n=1}^{[(a+b)N]-1-j}T^{-n}\beta\Big)(T^jx).
\end{align*}
Write $G_k(x)=I_\mu(\beta|\bigvee_{n=1}^{k-1}T^{-n}\beta)(x)$
for $k\in \mathbb{N}$ and $x\in X$. Then
\begin{equation}\label{fh-eq-4}
I_\mu( \bigvee_{n=0}^{\lceil aN \rceil-1}T^{-n}\beta|
\bigvee_{n=\lceil aN \rceil}^{\lceil (a+b)N
\rceil-1}T^{-n}\beta\Big)(x)=\sum_{j=0}^{\lceil
aN\rceil-1}G_{\lceil(a+b)N\rceil-j}(T^jx).
\end{equation}
Since $\bigvee_{n=1}^{k-1} T^{-n}\beta\uparrow \bigvee_{n=1}^\infty
T^{-n}\beta$ when $k\rightarrow +\infty$,   $G_k\in
L^1(X,\mathcal{B},\mu)$ is a sequence that converges almost
everywhere and in $L^1$ to $G\in L^1(X,\mathcal{B},\mu)$ by Lemma \ref{Martingale}. As
$H_\mu(\beta)<\infty$,  we have $\int_X \sup_k |G_k(x)| d\mu(x)\le
H_\mu(\beta)+1<\infty$ by Chung's lemma \cite{Chung}. By \eqref{fh-eq-4}
and Lemma \ref{Breiman},
\begin{align*}
\lim_{N\rightarrow +\infty} & \frac{1}{N}I_\mu\Big( \bigvee_{n=0}^{\lceil
aN \rceil-1}T^{-n}\beta \Big| \bigvee_{n=\lceil aN \rceil}^{\lceil (a+b)N
\rceil-1}T^{-n}\beta\Big)(x)\\
&=a\lim_{N\rightarrow
+\infty}\frac{1}{\lceil aN\rceil }\sum_{j=0}^{\lceil
aN\rceil-1}G_{\lceil(a+b)N\rceil-j}(T^jx)\\
&=a\mathbb{E}_\mu(G|\mathcal{I}_\mu)(x)
\end{align*}
almost everywhere. Hence \eqref{fh-eq-3} holds, so does \eqref{fh-eq-2}.

\medskip
Now we are ready to prove \eqref{fh-eq-1}, by induction on $k$.
For $k=1$, \eqref{fh-eq-1} reduces to  the
Shannon-McMillan-Breiman theorem. Assume that \eqref{fh-eq-1} holds for $k=\ell$
($\ell\ge 1$). We show below that it holds for $k=\ell+1$.

Let $k=\ell+1$. Write $\beta_i=\bigvee_{j=i}^{\ell+1}\alpha_i$ for
$i=1,\ldots,\ell+1$. Then $\beta_1\succeq
\beta_2\succeq\cdots\succeq \beta_{\ell+1}$ and
$F_i(x)=I_\mu(\beta_i|\bigvee_{n=1}^{+\infty}T^{-n}\beta_i)(x)$ for
$i=1,\ldots,\ell+1$. Note that
\begin{equation}\label{fh-eq-eq}
\bigvee_{i=1}^{\ell+1}
(\alpha_i)_0^{\lceil(a_1+\cdots+a_i)N\rceil-1}=\Big(
\bigvee_{i=1}^\ell (\beta_i)_0^{\lceil(a_1+\cdots+a_i)N\rceil-1}
\Big) \vee (\beta_{\ell+1})_{\lceil
(a_1+\cdots+a_\ell)N\rceil}^{\lceil
(a_1+\cdots+a_\ell+a_{\ell+1})N\rceil-1}.
\end{equation}

By the induction assumption and \eqref{fh-eq-2}, we have
\begin{equation}
\label{fh-eq-5}
\begin{aligned} &\lim_{N\rightarrow
+\infty}\frac{1}{N}I_\mu \Big( \bigvee_{i=1}^\ell (\beta_i)_0^{\lceil(a_1+\cdots+a_i)N\rceil-1}\Big)(x)=\sum_{i=1}^\ell a_i\mathbb{E}_\mu(F_i|\mathcal{I}_\mu)(x) \text{ and }\\
&\lim_{N\rightarrow +\infty}\frac{1}{N}I_\mu\Big(
(\beta_{\ell+1})_{\lceil (a_1+\cdots+a_\ell)N\rceil}^{\lceil
(a_1+\cdots+a_\ell+a_{\ell+1})N\rceil-1})\Big)(x)=a_{\ell+1}
\mathbb{E}_\mu(F_{\ell+1}|\mathcal{I}_\mu)(x)
\end{aligned}
\end{equation}
almost everywhere. Next we use the idea employed by Algoet
and Cover \cite{AC} in their elegant ``sandwich'' proof of the
Shannon-McMillan-Breiman theorem. For  $\mu$-a.e. $x\in X$, we
define
$$Z_m(x)=\frac{\mu \Big( \bigvee \limits_{i=1}^\ell (\beta_i)_0^{\lceil(a_1+\cdots+a_i)m\rceil-1}(x)\Big)\cdot
\mu\Big(  (\beta_{\ell+1})_{\lceil
(a_1+\cdots+a_\ell)m\rceil}^{\lceil
(a_1+\cdots+a_\ell+a_{\ell+1})m\rceil-1}(x)  \Big)} {\mu\Big(\Big(
\bigvee \limits_{i=1}^\ell
(\beta_i)_0^{\lceil(a_1+\cdots+a_i)m\rceil-1}  \vee
(\beta_{\ell+1})_{\lceil (a_1+\cdots+a_\ell)m\rceil}^{\lceil
(a_1+\cdots+a_\ell+a_{\ell+1})m\rceil-1}\Big)(x)\Big)}$$ for all
$m\in \mathbb{N}$. Then  for $\mu$-a.e. $x\in X$, $Z_m(x)>0$ for all
$m\in \mathbb{N}$.

Since \begin{align*}\int_X Z_m(x)d \mu(x)&=\sum_{A\in
 \bigvee \limits_{i=1}^\ell (\beta_i)_0^{\lceil(a_1+\cdots+a_i)m\rceil-1} \atop{B\in
(\beta_{\ell+1})_{\lceil (a_1+\cdots+a_\ell)m\rceil}^{\lceil
(a_1+\cdots+a_\ell+a_{\ell+1})m\rceil-1}}} \int_{A\cap
B}\frac{\mu(A)\mu(B)}{\mu(A\cap B)}d\mu(x)\\
&=\sum_{A\in
 \bigvee \limits_{i=1}^\ell (\beta_i)_0^{\lceil(a_1+\cdots+a_i)m\rceil-1} \atop{B\in
(\beta_{\ell+1})_{\lceil (a_1+\cdots+a_\ell)m\rceil}^{\lceil
(a_1+\cdots+a_\ell+a_{\ell+1})m\rceil-1}}} \mu(A)\mu(B)\\
&=1,
\end{align*}
the series $\sum_{m=1}^\infty \mu(\{x\in X: Z_m(x)\ge e^{\epsilon
m}\})$ converges for every $\epsilon>0$ and the Borel-Canteli Lemma
implies that $\limsup_{N\rightarrow +\infty} \frac{1}{N} \log
Z_N(x)\le 0$ for $\mu$-a.e. $x\in X$. Using the definition of $Z_m$,
\eqref{fh-eq-eq} and \eqref{fh-eq-5},  we obtain
$$\limsup_{N\rightarrow +\infty}\frac{1}{N}I_\mu \Big(\bigvee_{i=1}^{\ell+1}
(\alpha_i)_0^{\lceil(a_1+\cdots+a_i)N\rceil-1}\Big)(x)\le
\sum_{i=1}^{\ell+1}a_i \mathbb{E}_\mu(F_i|\mathcal{I}_\mu)(x)$$ for
$\mu$-a.e. $x\in X$.

Conversely, by \eqref{fh-eq-2} and the induction assumption, we have
\begin{equation}
\label{fh-eq-6}
\begin{aligned} &\lim_{N\rightarrow +\infty}\frac{1}{N}I_\mu\Big(
(\beta_{i})_{\lceil (a_1+\cdots+a_\ell)N\rceil}^{\lceil
(a_1+\cdots+a_\ell+a_{\ell+1})N\rceil-1}\Big)(x)=a_{\ell+1}
\mathbb{E}_\mu(F_{i}|\mathcal{I}_\mu)(x), \ i=\ell,\; \ell+1
\text{ and }\\
& \lim_{N\rightarrow +\infty}  \frac{1}{N}I_\mu \Big(
\bigvee_{i=1}^{\ell-1}
(\beta_i)_0^{\lceil(a_1+\cdots+a_i)N\rceil-1}\vee
(\beta_\ell)_0^{\lceil (a_1+\cdots+a_\ell+a_{\ell+1})N\rceil -1}
\Big)(x)\\
&\quad =(a_\ell+a_{\ell+1})\mathbb{E}_\mu(F_\ell|\mathcal{I}_\mu)(x)+\sum_{i=1}^{\ell-1}
a_i\mathbb{E}_\mu(F_i|\mathcal{I}_\mu)(x)
\end{aligned}
\end{equation}
almost everywhere. Then for  $\mu$-a.e. $x\in X$, we define
\begin{align*}
R_m(x)=&\frac{\mu\Big(\Big( \bigvee \limits_{i=1}^\ell
(\beta_i)_0^{\lceil(a_1+\cdots+a_i)m\rceil-1}  \vee
(\beta_{\ell+1})_{\lceil (a_1+\cdots+a_\ell)m\rceil}^{\lceil
(a_1+\cdots+a_\ell+a_{\ell+1})m\rceil-1}\Big)(x)\Big)} {
\mu\Big(\Big( \bigvee \limits_{i=1}^{\ell-1}
(\beta_i)_0^{\lceil(a_1+\cdots+a_i)N\rceil-1}\vee
(\beta_\ell)_0^{\lceil (a_1+\cdots+a_\ell+a_{\ell+1})N\rceil -1}
\Big)(x)\Big)}\\
&\times \frac{\mu\Big( (\beta_{\ell})_{\lceil
(a_1+\cdots+a_\ell)m\rceil}^{\lceil
(a_1+\cdots+a_\ell+a_{\ell+1})m\rceil-1}(x)  \Big)}{\mu\Big(
(\beta_{\ell+1})_{\lceil (a_1+\cdots+a_\ell)m\rceil}^{\lceil
(a_1+\cdots+a_\ell+a_{\ell+1})m\rceil-1}(x)\Big) }
\end{align*}
 for all
$m\in \mathbb{N}$. Then  for $\mu$-a.e. $x\in X$, $R_m(x)>0$ for all
$m\in \mathbb{N}$.

Since $\beta_\ell\succeq \beta_{\ell+1}$, we have
\begin{align*}\int_X R_m(x)d \mu(x)&=\sum_{A\in
\bigvee \limits_{i=1}^\ell
(\beta_i)_0^{\lceil(a_1+\cdots+a_i)m\rceil-1}
 \atop{ B\in (\beta_{\ell+1})_{\lceil (a_1+\cdots+a_\ell)m\rceil}^{\lceil
(a_1+\cdots+a_\ell+a_{\ell+1})m\rceil-1} \atop{ C\in
(\beta_{\ell})_{\lceil (a_1+\cdots+a_\ell)m\rceil}^{\lceil
(a_1+\cdots+a_\ell+a_{\ell+1})m\rceil-1}}}} \int_{A\cap
B\cap C}\frac{\mu(A\cap B)\mu(B\cap C)}{\mu(A\cap B\cap C)\mu(B)}d\mu(x)\\
&=\sum_{A\in \bigvee \limits_{i=1}^\ell
(\beta_i)_0^{\lceil(a_1+\cdots+a_i)m\rceil-1}
 \atop{ B\in (\beta_{\ell+1})_{\lceil (a_1+\cdots+a_\ell)m\rceil}^{\lceil
(a_1+\cdots+a_\ell+a_{\ell+1})m\rceil-1} \atop{ C\in
(\beta_{\ell})_{\lceil (a_1+\cdots+a_\ell)m\rceil}^{\lceil
(a_1+\cdots+a_\ell+a_{\ell+1})m\rceil-1}}}}  \frac{\mu(A\cap B)\mu(B\cap C)}{\mu(B)}\\
&=1
\end{align*}
for $m\in \mathbb{N}$. Thus the series $\sum_{m=1}^\infty \mu(\{x\in
X: R_m(x)\ge e^{\epsilon m}\})$ converges for every $\epsilon>0$ and
the Borel-Canteli Lemma implies that $\limsup_{N\rightarrow +\infty}
\frac{1}{N} \log R_N(x)\le 0$ for $\mu$-a.e. $x\in X$. Using the
definition $R_N$, \eqref{fh-eq-eq}  and \eqref{fh-eq-6}, we have
$$\liminf_{N\rightarrow +\infty}\frac{1}{N}I_\mu
\Big(\bigvee_{i=1}^{\ell+1}
(\alpha_i)_0^{\lceil(a_1+\cdots+a_i)N\rceil-1}\Big)(x)\ge
\sum_{i=1}^{\ell+1}a_i \mathbb{E}_\mu(F_i|\mathcal{I}_\mu)(x)$$ for
$\mu$-a.e. $x\in X$. for $\mu$-a.e. $x\in X$. This
completes the proof of Proposition \ref{SMB}.
\end{proof}

\noindent{\bf Acknowledgements}.   The first author was partially supported by RGC grants in the Hong Kong Special Administrative
Region, China (projects CUHK401112, CUHK401013). The second author was partially supported by NNSF (11225105, 11371339, 11431012).

\end{document}